\documentclass[11pt]{amsart}    
\usepackage{latexsym}
\usepackage{amsfonts}
\usepackage{amsmath,amsthm,amssymb}
\usepackage{fullpage}   
\usepackage{mathabx}
\usepackage{tikz-cd}
 \usetikzlibrary{matrix}
\usepackage[colorlinks=true, linkcolor=blue, citecolor=magenta]{hyperref}
\usepackage{mathrsfs}
\usepackage{soul}
\usepackage{arydshln}


\newcommand{\cal}{\mathcal}


\newcommand{\chop}{\dagger}

\def\epsilon{\varepsilon}
\def\phi{\varphi}

\def\hat{\widehat}

\def\subset{\subseteq}

\newcommand{\supp}{\mbox{Supp}}

\newcommand{\card}{\mbox{card}}

\newcommand{\bvec}{\overleftarrow}











\newcommand{\R}{\mathbb R}
\newcommand{\Z}{\mathbb Z}

\newcommand{\N}{\mathbb N}
\newcommand{\Hy}{\mathbb H}


\def\strutdepth{\dp\strutbox}
\def \ss{\strut\vadjust{\kern-\strutdepth \sss}}
\def \sss{\vtop to \strutdepth{
\baselineskip\strutdepth\vss\llap{$\diamondsuit\;\;$}\null}}

\def\strutdepth{\dp\strutbox}
\def \sst{\strut\vadjust{\kern-\strutdepth \ssss}}
\def \ssss{\vtop to \strutdepth{
\baselineskip\strutdepth\vss\llap{$\spadesuit\;\;$}\null}}

\def\strutdepth{\dp\strutbox}
\def \ssh{\strut\vadjust{\kern-\strutdepth \sssh}}
\def \sssh{\vtop to \strutdepth{
\baselineskip\strutdepth\vss\llap{$\heartsuit\;\;$}\null}}


\def\qed{\hfill\rlap{$\sqcup$}$\sqcap$\par}

\def\tilde{\widetilde}


\vfuzz2pt 


\def\strutdepth{\dp\strutbox}
\def \ss{\strut\vadjust{\kern-\strutdepth \sss}}
\def \sss{\vtop to \strutdepth{
\baselineskip\strutdepth\vss\llap{$\diamondsuit\;\;$}\null}}

\def\strutdepth{\dp\strutbox}
\def \sst{\strut\vadjust{\kern-\strutdepth \ssss}}
\def \ssss{\vtop to \strutdepth{
\baselineskip\strutdepth\vss\llap{$\spadesuit\;\;$}\null}}

\def\qed{\hfill\rlap{$\sqcup$}$\sqcap$\par}


\newtheorem{thm}{Theorem}[section]
\newtheorem{cor}[thm]{Corollary}
\newtheorem{lem}[thm]{Lemma}
\newtheorem{prop}[thm]{Proposition}

\newtheorem{fact}[thm]{Fact}
%

\theoremstyle{definition}
\newtheorem{defn}[thm]{Definition}
\newtheorem{example}[thm]{Example}

\newtheorem{rem}[thm]{Remark}
\newtheorem{defn-rem}[thm]{Definition-Remark}

\newtheorem{entre-nous}[thm]{\large JUSTE ENTRE NOUS}

\newtheorem{assumption}[thm]{Assumption}

\theoremstyle{remark}

\numberwithin{equation}{section}

\begin{document}

\author[N.~Bedaride]{Nicolas B\'edaride}
\author[A.~Hilion]{Arnaud Hilion}
\author[M.~Lustig]{Martin Lustig}

\address{\tt 
Aix Marseille Universit\'e, CNRS, 
I2M UMR 7373,
13453  Marseille, 
France}

\email{\tt Nicolas.Bedaride@univ-amu.fr}

\address{\tt
Institut de Math\'ematiques de Toulouse, UMR 5219, Universit\'e de Toulouse, UPS F-31062
Toulouse Cedex~9, France}

\email{\tt arnaud.hilion@math.univ-toulouse.fr}

\address{\tt 
Aix Marseille Universit\'e, CNRS, 
I2M UMR 7373,
13453  Marseille, 
France}

\email{\tt Martin.Lustig@univ-amu.fr}

\title[Measures transfer and $S$-adic development]{
Measure transfer and $S$-adic developments for subshifts}

\begin{abstract} 
Based on previous work of the authors, to any $S$-adic development of a subshift  $X$ a ``directive sequence'' of commutative diagrams is associated, which consists at every level $n \geq 0$ of the measure cone and the letter frequency cone of the level subshift  $X_n$  associated canonically to the given $S$-adic development.
 
The issuing rich picture enables one to deduce results about $X$  with unexpected directness. For instance, we exhibit a large class of minimal subshifts with entropy zero that all have infinitely many ergodic probability measures.
 
As a side result we also exhibit, for any integer $d \geq 2$, an $S$-adic development of a minimal, aperiodic, uniquely ergodic subshift $X$, where 
all level alphabets $\cal A_n$ have cardinality $d\,$, while none of the $d-2$ bottom level morphisms is recognizable in its level subshift $X_n \subset \cal A_n^\Z$.
\end{abstract} 

\thanks{The first author was partially supported by 
ANR Project IZES ANR-22-CE40-0011}
 
\subjclass[2010]{Primary 37B10, Secondary 37A25, 37E25}
 
\keywords{$S$-adic development, measure transfer, 
ergodic measures, letter frequencies, eventually recognizable}
 
\maketitle

\section{Introduction}
\label{sec:intro}

A {\em subshift} over a finite {alphabet} $\cal A$ is a non-empty, closed and shift-invariant subset $X \subset \cal A^\Z$.
A very efficient tool to investigate such a subshift $X$ is given by an {\em $S$-adic development} of $X$: the latter is 
obtained 
by a {\em directive sequence} $\bvec \sigma$ of 
monoid morphisms $\sigma_n: \cal A^*_{n+1} \to \cal A_n^*$ for all integers $n \geq 0$, where each $\cal A_n$ is again a finite alphabet, and $\cal A_n^*$ denotes the free monoid over $\cal A_n$. 
The morphisms $\sigma_n$ here are all assumed to be {\em non-erasing}, i.e. none of the letters of $\cal A_{n+1}$ is mapped to the empty word.
The directive sequence $\bvec \sigma$ {\em generates} the given subshift $X$ if for some identification $\cal A = \cal A_0$ any finite factor $x_k \ldots x_\ell$ of any biinfinite word ${\bf x} = \ldots x_{-1} x_0 x_1 \ldots \in X$ is also a factor of some $\sigma_0 \circ \ldots \circ \sigma_{n-1}(a_i)$ with $a_i \in \cal A_{n}$, and conversely: any such ${\bf x}$ belongs to $X$. One usually also assumes that $\bvec \sigma$ is {\em everywhere growing}, which means that 
$\underset{n\to \infty}{\liminf} (\min\{|\sigma_0 \circ \ldots \circ \sigma_{n-1}(a_i)| \mid a_i \in \cal A_{n}\}) = \infty$.
It is well known that any subshift $X \subset \cal A^\Z$ is generated by some everywhere growing directive sequence $\bvec \sigma$.

A directive sequence $\bvec \sigma$ as above determines at every level $n \geq 0$ a {\em level subshift} $X_n \subset \cal A_n^\Z$, which is the subshift generated by the truncated sequence $\bvec \sigma\chop_n$, obtained from $\bvec \sigma$ through forgetting all levels $k < n$ and the corresponding level morphisms. It is a straight forward observation that every level morphism $\sigma_n$ induces a map $X_{n+1} \to X_n$ which is surjective on shift-orbits. 

More generally, any non-erasing morphism $\sigma: \cal A^* \to \cal B^*$ between free monoids over finite alphabets $\cal A$ and $\cal B$ respectively, defines for any subshift $X \subset \cal A^\Z$ an image subshift $\sigma(X)$, and it is natural to ask which properties of $X$ are inherited (under suitable hypotheses) by the image subshift $\sigma(X)$.
In our cousin paper \cite{PartI} we have formally introduced and studied, for any 
such morphism $\sigma$, a {\em measure transfer map} 
$\sigma_X^\cal M: \cal M(X) \to \cal M(\sigma(X))$,  
where $\cal M(X)$ denotes the {\em measure cone} on $X$, i.e. the set of all shift-invariant Borel measures on the subshift $X$. The map $\sigma_X^\cal M$ is the restriction/co-restriction of a map $\sigma^\cal M: \cal M(\cal A^\Z) \to \cal M(\cal B^\Z)$ which 
is linear, functorial and commutes with the support map on subshifts (see section \ref{sec:2.2}).

We thus obtain canonically, for any everywhere growing directive sequence $\bvec \sigma = (\sigma_n: \cal A^*_{n+1} \to \cal A^*_n)_{n \geq 0}$
as above, an induced sequence $ \cal M(\bvec \sigma) = (\sigma^\cal M_n: \cal M(X_{n+1}) \to \cal M(X_n))_{n \geq 0}$ of linear maps 
$\sigma^\cal M_n := \sigma^\cal M_{X_{n+1}}$ on the measure cones $\cal M(X_{n+1})$.

Furthermore, any invariant measure $\mu$ on a subshift $X \subset \cal A^\Z$ defines canonically a {\em letter frequency vector} $\vec v(\mu)$ in the non-negative cone $\R_{\geq 0}^\cal A$ of the vector space $\R^\cal A$, where for each letter $a_i \in \cal A$ the coordinate of $\vec v(\mu)$ is given by the measure $\mu([a_i])$ of the cylinder $[a_i]$. The latter consists of all biinfinite words ${\bf x} \in \cal A^\Z$ as above for which the letter with index 1 satisfies $x_1 = a_i$. The cone of all such letter frequency vectors is denoted by $\cal C(X) \subset \R_{\geq 0}^\cal A\,$; it gives rise to a canonical linear {\em evaluation map} $\zeta_X: \cal M(X) \to \cal C(X)$ which by definition is surjective. 

It has been shown in \cite{PartI} that the linear map $\R^\cal A \to \R^\cal B$, defined by the incidence matrix $M(\sigma)$ of any non-erasing free monoid morphism $\sigma: \cal A^* \to \cal B^*$, commutes via the evaluation maps $\zeta_{\cal A^\Z}$ and $\zeta_{\cal B^\Z}$ with the measure transfer map $\sigma^\cal M$. We thus obtain, for any directive sequence $\bvec \sigma$ as above, a rather useful commutative diagram:
$$\,\,\,\, \ldots \,\,\overset{\sigma_{n+1}^\cal M}{\longrightarrow}\,\,\, \cal M(X_{n+1}) \overset{\sigma_{n}^\cal M}{\longrightarrow}  \,\,\cal M(X_n)\,\,\overset{\sigma_{n-1}^\cal M}{\longrightarrow} \,\,\,\ldots \,\,\,\,\,\,\overset{\sigma_{2}^\cal M}{\longrightarrow} \,\,\cal M(X_1) \overset{\sigma_{1}^\cal M}{\longrightarrow}  \cal M(X)$$
$$\qquad \qquad \qquad\downarrow \zeta_{X_{n+1}} \qquad\,\,\,\,\downarrow \zeta_{X_{n}} \qquad \qquad \qquad \qquad\,\,\,\,\downarrow \zeta_{X_{1}} \qquad \qquad\downarrow \zeta_{X}$$
$$ \ldots \,\,\overset{M(\sigma_{n+1})}{\longrightarrow} \cal C(X_{n+1}) \overset{M(\sigma_{n})}{\longrightarrow}  \cal C(X_n)\overset{M(\sigma_{n-1})}{\longrightarrow} \,\,\ldots \,\,\overset{M(\sigma_{2})}{\longrightarrow} \cal C(X_1) \,\,\overset{M(\sigma_{1})}{\longrightarrow} \,\, \cal C(X)$$

\smallskip

A {\em measure tower} $\bvec \mu = (\mu_n)_{n \geq 0}$ on a directive sequence $\bvec \sigma$ as above, defined by postulating $\mu_n \in \cal M(X_n)$ and $\sigma_n^\cal M(\mu_{n+1}) = \mu_n\,$, 
defines a tower of letter frequency vectors $\vec v(\bvec \mu) = (\vec v(\mu_n))_{n \geq 0}$ which satisfy $M(\sigma_{n+1}) \cdot \vec v(\mu_{n+1}) = \vec v(\mu_n)$. 
This 
last equality had been used in \cite{BHL1} as defining equality for what was called there a {\em vector tower} over the directive sequence $\bvec \sigma$. A linear evaluation map $\frak m: \cal V(\bvec \sigma) \to \cal M(X)$, from the set $\cal V(\bvec \sigma)$ of all such vector towers, to the measure cone $\cal M(X)$ of the subshift $X$
generated by $\bvec \sigma$, has been established in \cite{BHL1}, and the map $\frak m$ is shown in \cite{BHL1} to be always surjective, as long as $\bvec \sigma$ is everywhere growing (but no other hypotheses are needed). We obtain:

\begin{prop}
\label{1.2}
For any everywhere growing directive sequence $\bvec \sigma$ there is a canonical linear bijection between the cone $\cal V(\bvec \sigma)$ of vector towers and the cone $\cal M(\bvec \sigma)$ of measure towers on $\bvec \sigma$, given by the letter frequency map
$$\bvec \mu = (\mu_n)_{n \geq 0} \,\,\mapsto\,\, \bvec v = (\vec v_n)_{n \geq 0}\,,$$
with $\vec v_n = \vec v(\mu_n) =(\mu_n([a_k]))_{a_k \in \cal A_n}$ for all levels $n \geq 0$.
\end{prop}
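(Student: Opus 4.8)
The plan is to realize the letter frequency map $\Phi\colon \cal M(\bvec\sigma)\to\cal V(\bvec\sigma)$, sending $\bvec\mu=(\mu_n)_{n\geq0}$ to $(\vec v(\mu_n))_{n\geq0}$, as a linear bijection by producing an explicit inverse from the evaluation map of \cite{BHL1}. That $\Phi$ is well defined and linear is immediate: each $\zeta_{X_n}$ is linear, and the commutativity of the squares in the diagram above, $\zeta_{X_n}\circ\sigma_n^\cal M=M(\sigma_n)\circ\zeta_{X_{n+1}}$, turns the measure tower relations $\sigma_n^\cal M(\mu_{n+1})=\mu_n$ into the corresponding relations among the $\vec v(\mu_n)$, so that $\Phi(\bvec\mu)$ is indeed a vector tower.

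To build a candidate inverse $\Psi$ I would work level by level. For each $n\geq0$ the truncated sequence $\bvec\sigma\chop_n$ is again everywhere growing and generates the level subshift $X_n$, so \cite{BHL1} furnishes a linear surjective evaluation map $\frak m_n\colon \cal V(\bvec\sigma\chop_n)\to\cal M(X_n)$. A vector tower $\bvec v=(\vec v_k)_{k\geq0}$ on $\bvec\sigma$ restricts to a vector tower $\bvec v\chop_n:=(\vec v_{n+k})_{k\geq0}$ on $\bvec\sigma\chop_n$; I then set $\mu_n:=\frak m_n(\bvec v\chop_n)$ and $\Psi(\bvec v):=(\mu_n)_{n\geq0}$.

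The next step is to check that $\Psi$ takes values in $\cal M(\bvec\sigma)$ and that $\Phi\circ\Psi=\mathrm{id}$. This rests on two naturality properties of the maps $\frak m_n$, each expressing that the construction of \cite{BHL1} is compatible with the measure transfer of \cite{PartI}: first, $\sigma_n^\cal M(\frak m_{n+1}(\bvec v\chop_{n+1}))=\frak m_n(\bvec v\chop_n)$, which says the $\mu_n$ satisfy the measure tower relation; and second, $\zeta_{X_n}(\frak m_n(\bvec v\chop_n))=\vec v_n$, which says $\frak m_n$ returns the prescribed bottom letter frequency vector. I expect both to follow by propagating the commutative diagram through the definition of $\frak m_n$ as a limit of finite level cylinder averages.

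The hard part will be the reverse identity $\Psi\circ\Phi=\mathrm{id}$, that is, the injectivity of $\Phi$. It cannot be verified one level at a time: although $\zeta_{X_n}$ has trivial kernel on the positive cone $\cal M(X_n)$ --- a measure all of whose single letter cylinders have mass $0$ has total mass $0$ and hence vanishes --- its kernel on the ambient space of signed measures is large, so two measures with equal letter frequencies need not coincide. Injectivity must instead be extracted from the entire tower: using the transfer maps one rewrites each cylinder value $\mu_n([w])$, for a word $w$ of length $\ell$, in terms of the data at a far level $n+k$, where by the everywhere growing hypothesis the images $\sigma_n\circ\cdots\circ\sigma_{n+k-1}(a)$ exceed length $\ell$, so that the straddling contributions become negligible as $k\to\infty$ and $\mu_n([w])$ is governed entirely by the vector tower $\bvec v$. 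This reconstruction --- equivalently, the injectivity of each $\frak m_n$ --- is the substantive content of the statement and is precisely where the everywhere growing condition and the fine structure of the transfer maps of \cite{PartI} are needed; the assembly of the two one sided inverses into a bijection is then formal.
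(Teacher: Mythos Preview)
Your setup mirrors the paper's proof (Proposition \ref{22.9}) almost exactly: both define $\Phi=\bvec\zeta$ and $\Psi=\bvec{\frak m}$, both verify that $\Psi$ lands in $\cal M(\bvec\sigma)$ via Proposition \ref{3.1d} (your first ``naturality property''), and both obtain $\Phi\circ\Psi=\mathrm{id}$ levelwise from Proposition \ref{2.4d}(1) (your second one).

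The only real difference is in how the remaining half is organised. You aim to prove $\Psi\circ\Phi=\mathrm{id}$ directly by the reconstruction argument you sketch. The paper instead observes that $\Phi\circ\Psi=\mathrm{id}$ already makes $\Psi$ injective, and then argues that $\Psi=\bvec{\frak m}$ is \emph{surjective}: given a measure tower $\bvec\mu$, the surjectivity of each $\frak m_k$ (Theorem \ref{thm1} applied to $\bvec\sigma\chop_k$) produces preimage towers whose bottom vectors are forced by Proposition \ref{2.4d}(1) to be the $\vec v_k=\zeta_{X_k}(\mu_k)$; combined with Lemma \ref{4.2d} this pins $\bvec{\frak m}(\bvec\zeta(\bvec\mu))=\bvec\mu$. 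When unwound, this is essentially your straddling-term argument, so your sketch is sound --- the paper just packages it as an appeal to already-proved results rather than redoing the limit computation, which makes the ``hard part'' much shorter than you anticipate.

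One genuine slip: your parenthetical ``equivalently, the injectivity of each $\frak m_n$'' is wrong. The individual maps $\frak m_n:\cal V(\bvec\sigma\chop_n)\to\cal M(X_n)$ are \emph{not} injective in general --- two distinct vector towers on $\bvec\sigma\chop_n$ can yield the same measure at level $n$ whenever the telescoped morphisms above fail to be recognizable (cf.\ Remark \ref{4.5.5d}). What is being shown is not injectivity of $\frak m_n$, but rather that the \emph{particular} tower $\bvec\zeta(\bvec\mu)\chop_n$ is sent by $\frak m_n$ to $\mu_n$; equivalently, that $\bvec{\frak m}$ (not $\frak m_n$) is injective, which follows already from $\Phi\circ\Psi=\mathrm{id}$.
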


Based on the main result of our previous paper \cite{BHL1} (quoted below as Theorem \ref{thm1}) we derive from this set-up the following consequence (see Proposition \ref{4.5}):

\begin{thm}
\label{1.1}
For any non-erasing monoid morphism $\sigma: \cal A^* \to \cal B^*$ and any subshift $X \subset \cal A^\Z$ the induced measure transfer map $\sigma^\cal M$ maps the measure cone $\cal M(X)$ of $X$ 
surjectively to the measure cone $\cal M(\sigma(X))$ of the image subshift $\sigma(X)$:
$$\sigma^\cal M(\cal M(X)) = \cal M(\sigma(X))$$
\end{thm}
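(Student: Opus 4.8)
The plan is to prove the nontrivial inclusion $\cal M(\sigma(X)) \subseteq \sigma^\cal M(\cal M(X))$, i.e. surjectivity; the reverse inclusion $\sigma^\cal M(\cal M(X)) \subseteq \cal M(\sigma(X))$ holds already by construction, since in \cite{PartI} the map $\sigma_X^\cal M$ is defined as a co-restriction of $\sigma^\cal M$ with codomain $\cal M(\sigma(X))$ (equivalently, since $\sigma^\cal M$ commutes with the support map, one has $\supp(\sigma^\cal M(\mu)) \subseteq \sigma(\supp \mu) \subseteq \sigma(X)$ for every $\mu \in \cal M(X)$). So everything reduces to realizing an arbitrary $\nu \in \cal M(\sigma(X))$ as $\sigma^\cal M(\mu)$ for some $\mu \in \cal M(X)$.

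The key idea is to make $\sigma$ the bottom morphism of an everywhere growing directive sequence. First I would invoke the fact quoted in the introduction that $X$ itself is generated by some everywhere growing directive sequence $\bvec\tau = (\tau_n \colon \cal A_{n+1}^* \to \cal A_n^*)_{n\geq 0}$ with $\cal A_0 = \cal A$. Prepending $\sigma$ then yields a new directive sequence $\bvec\sigma' = (\sigma_n')_{n \geq 0}$ with base alphabet $\cal B$, given by $\sigma_0' = \sigma$ and $\sigma_n' = \tau_{n-1}$ for $n \geq 1$. Since $\sigma$ is non-erasing it satisfies $|\sigma(w)| \geq |w|$ and hence does not destroy the everywhere-growing property of $\bvec\tau$, so $\bvec\sigma'$ is again everywhere growing; moreover its level-$1$ subshift is $X_1' = X$, while its base subshift is $X_0' = \sigma(X)$, the image subshift of \cite{PartI}.

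The core is then a lifting step applied to $\bvec\sigma'$. Given $\nu \in \cal M(\sigma(X)) = \cal M(X_0')$, I would apply the main result of \cite{BHL1} (Theorem \ref{thm1}): the evaluation map $\frak m \colon \cal V(\bvec\sigma') \to \cal M(\sigma(X))$ is surjective, so $\nu = \frak m(\bvec v)$ for some vector tower $\bvec v$. By Proposition \ref{1.2} this $\bvec v$ corresponds to a measure tower $\bvec\mu = (\mu_n)_{n\geq 0}$ on $\bvec\sigma'$, and the two formalisms are coherent in that $\frak m(\bvec v)$ is exactly the bottom term $\mu_0$; thus $\mu_0 = \nu$. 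Reading the defining relation $(\sigma_n')^{\cal M}(\mu_{n+1}) = \mu_n$ of a measure tower at level $n = 0$ gives $\sigma^\cal M(\mu_1) = \mu_0 = \nu$, and since $\mu_1 \in \cal M(X_1') = \cal M(X)$, this exhibits $\nu$ as the $\sigma^\cal M$-image of a measure on $X$, as required.

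I expect the main obstacle to lie not in this final one-line extraction but in the two bookkeeping compatibilities that underpin it. The first is the identification of the base subshift $X_0'$ of the prepended sequence with the image subshift $\sigma(X)$ as defined in \cite{PartI}, so that Theorem \ref{thm1} indeed produces measures on $\sigma(X)$ and not on some a priori larger subshift. The second is the coherence $\frak m(\bvec v) = \mu_0$ linking the evaluation map of \cite{BHL1} to the vector-tower/measure-tower bijection of Proposition \ref{1.2}. Both are morally built into the way the measure transfer and the tower formalisms were set up, but they are exactly the points that must be pinned down with care before the level-$0$ equation of the measure tower can be read off.
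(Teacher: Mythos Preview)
Your proposal is correct and follows essentially the same route as the paper's proof (Proposition \ref{4.5}): choose an everywhere growing directive sequence for $X$, prepend $\sigma$, lift the target measure to a vector tower via Theorem \ref{thm1}, and read off the preimage at level $1$. The only cosmetic difference is that the paper stays at the vector-tower level and invokes Proposition \ref{3.1d} directly to get $\mu' = \sigma^{\cal M}(\mu)$, whereas you pass through the measure-tower bijection of Proposition \ref{1.2} (which is itself built on Proposition \ref{3.1d}) and then read off the level-$0$ relation; your two ``bookkeeping compatibilities'' are exactly Definition-Remark \ref{3.0.5d}\,(1) and equation (\ref{eq4.1d}) at $k=0$.
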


This general surjectivity result 
for the measure transfer map $\sigma^\cal M$ is mirrored 
in the special case where $\sigma$ is recognizable in $X$ (see Definition \ref{6.1}) 
by the the following fact, proved below in Corollary \ref{3m.2}:

\begin{prop}
\label{1.recog}
If a 
non-erasing morphism $\sigma: \cal A^* \to \cal B^*$ is recognizable in a subshift $X \subset \cal A^\Z$, then the measure transfer map $\sigma^\cal M_X: \cal M(X) \to \cal M(\sigma(X))$ is injective.
\end{prop}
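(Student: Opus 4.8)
The plan is to recover the measure $\mu$ from its transfer $\nu := \sigma^\cal M_X(\mu)$ by reading off, for every word $u \in \cal A^*$, the value $\mu([u])$ as the $\nu$-mass of an explicit clopen subset of $\sigma(X)$ which is intrinsic to the pair $(\sigma(X), \sigma)$ and, crucially, does not depend on $\mu$. Since any shift-invariant Borel measure on a subshift is determined by its values on the cylinders $[u]$, a formula $\mu([u]) = \nu(C_u)$ with $C_u \subseteq \sigma(X)$ fixed in advance shows at once that $\sigma^\cal M_X(\mu_1) = \sigma^\cal M_X(\mu_2)$ forces $\mu_1([u]) = \mu_2([u])$ for all $u$, hence $\mu_1 = \mu_2$.

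To set this up I first recall the description of the transfer map from \cite{PartI} in tower form. Write $T$ for the shift, let $\hat X := \{(x,i) \mid x \in X,\ 0 \le i < |\sigma(x_0)|\}$ be the suspension of $X$ under the roof function $x \mapsto |\sigma(x_0)|$, and let $\Phi \colon \hat X \to \sigma(X)$, $(x,i) \mapsto T^i(\sigma(x))$. Then $\Phi$ intertwines the tower map on $\hat X$ with $T$, and the measure transfer is the pushforward $\sigma^\cal M(\mu) = \Phi_*(\mu \otimes \kappa)$, where $\kappa$ is the counting measure on the fibres $\{0, \dots, |\sigma(x_0)|-1\}$. As a consistency check this yields $\nu([b]) = \sum_{a \in \cal A} M(\sigma)_{b a}\, \mu([a])$ for each letter $b \in \cal B$, which is exactly the compatibility of $\sigma^\cal M$ with the incidence matrix $M(\sigma)$ via the evaluation maps recorded in the commutative diagram above.

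Now invoke recognizability (Definition \ref{6.1}): each $y \in \sigma(X)$ admits a unique decomposition $y = T^k(\sigma(x))$ with $x \in X$ and $0 \le k < |\sigma(x_0)|$, and both the locations of the cut points and the letters being read are determined by a finite central window of $y$. Consequently $\Phi$ is a bijection, and for every $u = a_1 \cdots a_m \in \cal A^*$ the set
\[ C_u := \{\, y \in \sigma(X) \mid y \text{ is at a cut point and its desubstitution } x \text{ satisfies } x_0 \cdots x_{m-1} = u \,\} \]
is a clopen subset of $\sigma(X)$, cut out by finitely many window conditions and manifestly independent of any measure. Under $\Phi$ we have $C_u = \Phi(\{(x,0) \mid x \in [u]\})$, and since the fibre coordinate is pinned to $i = 0$ the factor $\kappa$ contributes weight $1$; hence $\nu(C_u) = (\mu \otimes \kappa)(\{(x,0) \mid x \in [u]\}) = \mu([u])$. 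This is the desired formula, and injectivity of $\sigma^\cal M_X$ follows as explained in the first paragraph.

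The main obstacle is the second step: one must identify the abstract transfer map of \cite{PartI} with the tower pushforward $\Phi_*(\mu \otimes \kappa)$ and pin down the normalization exactly — in particular that the base level contributes weight $1$, so that no spurious length factor enters $\nu(C_u) = \mu([u])$. The verification through the incidence-matrix identity $\nu([b]) = \sum_a M(\sigma)_{ba}\mu([a])$ is the natural way to secure this. The remaining points — that recognizability makes $\Phi$ a genuine bijection and turns each $C_u$ into a $\mu$-independent clopen set, and that the family $\{C_u\}_{u \in \cal A^*}$ suffices to reconstruct $\mu$ — are then bookkeeping built directly on Definition \ref{6.1}.
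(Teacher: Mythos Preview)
Your argument is correct and is essentially the paper's proof rephrased in tower language. The paper factors $\sigma = \alpha_\sigma \circ \pi_\sigma$ through the subdivision alphabet $\cal A_\sigma$, observes that $\pi_\sigma^\cal M$ is always injective, and then uses Proposition \ref{3m.1}(2) to conclude that recognizability makes $\alpha_\sigma^X: \pi_\sigma(X) \to \sigma(X)$ a subshift isomorphism, whence $(\alpha_\sigma^X)^\cal M$ is injective. Your Kakutani--Rokhlin tower $\hat X$ is canonically identified with $\pi_\sigma(X)$ via $(x,i) \mapsto T^i(\pi_\sigma(x))$, the map $\mu \mapsto \mu \otimes \kappa$ is exactly $\pi_\sigma^\cal M$, and your $\Phi$ is $\alpha_\sigma^X$; so the two proofs have identical structure. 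Your explicit recovery formula $\mu([u]) = \nu(C_u)$ is a pleasant bonus that makes the injectivity visible without invoking functoriality.

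Two small points to tighten. First, your assertion that cut points and desubstituted letters are determined by a \emph{finite} window goes beyond Definition \ref{6.1}, which is a global uniqueness statement; the local version follows by a standard compactness argument, but you do not actually need it: once $\Phi$ is a continuous bijection between compact metric spaces it is a homeomorphism, and that alone makes each $C_u$ clopen. Second, the identification $\sigma^\cal M(\mu) = \Phi_*(\mu \otimes \kappa)$ that you ``recall'' is not stated in \cite{PartI} in this form; it follows from the subdivision-alphabet definition (Theorem \ref{2.2.2}) via the identification above, and your incidence-matrix check only verifies it on length-$1$ cylinders. Spelling out that identification (a two-line computation) would make the argument self-contained.
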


We 
apply 
this injectivity result 
to any directive sequence $\bvec \sigma =(\sigma_n)_{n \geq 0}\,$, where each level map $\sigma_n$ is assumed to be recognizable in the corresponding level subshift $X_{n+1}\,$. Such {\em totally recognizable} directive sequences 
(or slight variations of it) have recently received a lot of attention (see for instance \cite{BPR21}, \cite{BSTY19}, \cite{DDMP}, \cite{Espinoza}), and they are shown to play a central role in the $S$-adic 
approach to 
symbolic dynamics. 
We obtain (see Theorem \ref{4.6d}):

\begin{thm}
\label{1.3}
For any totally recognizable everywhere growing directive sequence $\bvec \sigma$, with generated subshift $X = X_{\tiny \bvec \sigma}$, the linear surjective map of cones
$$\frak m: \cal V(\bvec \sigma) \to \cal M(X)$$
is a bijection.
\end{thm}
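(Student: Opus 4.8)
The plan is to exploit that $\frak m$ is already known to be surjective, by \cite{BHL1}, so that only injectivity remains. First I would use the bijection of Proposition \ref{1.2} to replace vector towers by measure towers: the letter frequency map identifies $\cal V(\bvec\sigma)$ with the cone $\cal M(\bvec\sigma)$ of measure towers $\bvec\mu = (\mu_n)_{n\geq 0}$, where $\mu_n \in \cal M(X_n)$ and $\sigma_n^\cal M(\mu_{n+1}) = \mu_n$. Under this identification the evaluation map $\frak m$ becomes the projection onto the bottom level, $\bvec\mu \mapsto \mu_0 \in \cal M(X_0) = \cal M(X)$; this compatibility is exactly what makes the tower description of \cite{BHL1} and the measure transfer of \cite{PartI} encode the same invariant measures on $X$. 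Thus it suffices to show that the level-$0$ projection $\cal M(\bvec\sigma) \to \cal M(X)$, $(\mu_n) \mapsto \mu_0$, is injective, i.e. that a measure tower is already determined by its bottom term.

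Next I would bring in total recognizability. For each $n$ the level morphism $\sigma_n$ is recognizable in $X_{n+1}$, so Proposition \ref{1.recog} (Corollary \ref{3m.2}) shows that the measure transfer map $\sigma_n^\cal M \colon \cal M(X_{n+1}) \to \cal M(\sigma_n(X_{n+1}))$ is injective. The one point to verify here is that the defining equality $\sigma_n^\cal M(\mu_{n+1}) = \mu_n$ of a measure tower really places $\mu_n$ in the codomain $\cal M(\sigma_n(X_{n+1}))$ of this injective map; but $\mu_n$ is by construction in the image of $\sigma_n^\cal M$, and that image lies in $\cal M(\sigma_n(X_{n+1}))$, since $\sigma_n^\cal M$ is the restriction/co-restriction of $\sigma^\cal M$ and commutes with the support map on subshifts.

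With this in hand the injectivity is a routine inverse-limit argument. Suppose $\bvec\mu = (\mu_n)$ and $\bvec\nu = (\nu_n)$ are measure towers with $\mu_0 = \nu_0$. If $\mu_n = \nu_n$ for some $n$, then $\sigma_n^\cal M(\mu_{n+1}) = \mu_n = \nu_n = \sigma_n^\cal M(\nu_{n+1})$, and injectivity of $\sigma_n^\cal M$ forces $\mu_{n+1} = \nu_{n+1}$. By induction $\bvec\mu = \bvec\nu$, so the level-$0$ projection is injective, hence so is $\frak m$; combined with the surjectivity from \cite{BHL1} this gives that $\frak m$ is a bijection. The main obstacle — the only step that is not purely formal — is the identification of $\frak m$ with the bottom-level projection under Proposition \ref{1.2}, i.e. checking that the measure on $X$ produced by the evaluation map of \cite{BHL1} coincides with the level-$0$ component $\mu_0$ of the associated measure tower; once this is granted, everything reduces to the observation that an inverse system of injective maps has injective projection to its bottom term.
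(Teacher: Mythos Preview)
Your proposal is correct and follows essentially the same approach as the paper's proof (Theorem \ref{4.6d}): both pass through the bijection $\bvec\zeta$ between $\cal M(\bvec\sigma)$ and $\cal V(\bvec\sigma)$ from Proposition \ref{22.9}, and both use that total recognizability makes each level map $(\sigma_n)^\cal M_{X_{n+1}}$ injective (Corollary \ref{3m.2}), so that the bottom-level projection on measure towers is injective. The compatibility you flag as the ``main obstacle'' --- that $\frak m_{\tiny\bvec\sigma}$ coincides with the level-$0$ projection under this identification --- is immediate from the paper's construction in Lemma \ref{4.2d}: by definition $\bvec{\frak m}(\bvec v) = (\mu_k)_{k\geq 0}$ with $\mu_0 = \frak m_0(\bvec v\chop_0) = \frak m_{\tiny\bvec\sigma}(\bvec v)$, since $\bvec v\chop_0 = \bvec v$.
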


We combine this result with a construction from our earlier paper \cite{BHL2}, where for any integer $d \geq 2$ a subshift $X$ with $d$ distinct invariant ergodic probability measures has been shown to exist, 
while $X$ is defined by an everywhere growing directive sequence with level alphabets $\cal A_n$ that all have cardinality $\card(\cal A_n) = d$.
This construction is used in section \ref{sec:Cyr-Kra} below to 
define a large ``diagonal'' family 
$\frak X$ 
of directive sequences $\bvec \sigma$ and to give a quick proof 
(see Theorem \ref{9.2x}) 
that they all generate subshifts $X_{\tiny \bvec \sigma}$ which have 
a remarkable 
property, exhibited first by a quite different and more elaborate method for 
very particular 
subshifts in a recent paper by Cyr-Kra (see \cite{CK2}):

\begin{cor}
\label{1.4}
For any directive sequence $\bvec \sigma \in 
\frak X$ the subshift $X_{\tiny \bvec \sigma}$ is minimal, has topological entropy $h_{X_{\tiny \bvec \sigma}} = 0$ and admits infinitely many distinct ergodic probability measures in $\cal M(X_{\tiny \bvec \sigma})$.
\end{cor}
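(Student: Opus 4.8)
The plan is to verify the three asserted properties separately, with the count of ergodic measures being the substantive point and resting on the bijection of Theorem~\ref{1.3}. First I would record that every $\bvec\sigma \in \frak X$ is everywhere growing and totally recognizable: the everywhere growing condition is built into the diagonal assembly of the \cite{BHL2} blocks, while total recognizability is checked level by level, each level morphism being one of the \cite{BHL2} building morphisms, whose recognizability in the corresponding level subshift $X_{n+1}$ is either inherited from \cite{BHL2} or verified directly. With total recognizability in hand, Theorem~\ref{1.3} supplies the linear bijection of cones $\frak m \colon \cal V(\bvec\sigma) \to \cal M(X)$, which is the engine for the measure count below.

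For minimality I would verify that $\bvec\sigma$ is primitive, i.e.\ that for every level $n$ there is a level $m>n$ such that every letter of $\cal A_m$ is sent by the composition $\sigma_n\circ\cdots\circ\sigma_{m-1}$ to a word in which every letter of $\cal A_n$ occurs; the diagonal design of $\frak X$ guarantees this, and primitivity of an everywhere growing directive sequence yields minimality of the generated subshift $X$ by the standard argument. For zero entropy I would bound the factor complexity $p_X(n)$: the everywhere growing hypothesis together with the controlled alphabet growth along the diagonal family forces $p_X(n)$ to grow subexponentially, so that $h_X = \lim_{n\to\infty} \tfrac1n \log p_X(n) = 0$.

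The crux is the existence of infinitely many ergodic probability measures. Since $\frak m$ is a linear bijection between the cones $\cal V(\bvec\sigma)$ and $\cal M(X)$, it restricts to a bijection between their extreme rays, and the ergodic probability measures on $X$ are precisely the normalizations of the extreme rays of $\cal M(X)$. It therefore suffices to exhibit infinitely many pairwise distinct extreme rays in the vector tower cone $\cal V(\bvec\sigma) = \varprojlim\bigl(\R_{\geq 0}^{\cal A_n},\, M(\sigma_{n+1})\bigr)$. The diagonal assembly introduces, at a sequence of levels indexed by $k$, a fresh \cite{BHL2} block of parameter $d_k$ contributing $d_k$ distinguished letter frequency directions; from each such direction I would build an explicit vector tower in $\cal V(\bvec\sigma)$, argue that it spans an extreme ray, and then let $k\to\infty$ with $d_k\to\infty$ to obtain infinitely many such rays, shown to be pairwise distinct by tracking the level at which each first becomes supported.

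The main obstacle is controlling this extreme-ray structure across the inverse limit. Passage to $\varprojlim$ can both fail to lift extreme rays from finite levels and merge distinct finite-level rays into a single limit ray, so the candidate directions of the $k$-th block must be shown to survive and to remain projectively independent after application of all the incidence matrices $M(\sigma_n)\cdots M(\sigma_{m-1})$ beneath them. Establishing that the block-diagonal shape of these matrices keeps the relevant subcones from degenerating---so that each block contributes genuinely new extreme rays to the limit---is the technical heart of the argument, and it is precisely here that the quantitative growth choices in the definition of $\frak X$ must be invoked.
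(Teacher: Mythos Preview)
Your overall strategy is sound, and the ingredients you list (total recognizability, primitivity for minimality, growth control for entropy) match what the paper uses. But for the measure count you take a harder route than the paper, and the part you flag as ``the technical heart'' is exactly the step the paper's argument is designed to avoid.

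The paper does not try to exhibit infinitely many \emph{extreme rays} of $\cal V(\bvec\sigma)$, nor does it invoke the full bijection of Theorem~\ref{1.3}. Instead it argues purely by \emph{dimension}: for each $d\geq 2$ one fixes a level $n_0$ (where the level alphabet has $d$ letters), constructs $d$ vector towers on the truncated sequence $\bvec\sigma\chop_{n_0}$ whose bottom-level vectors are linearly independent, and reads off via Proposition~\ref{2.4d}(1) that the associated $d$ measures $\mu_1,\ldots,\mu_d \in \cal M(X_{n_0})$ span a subcone of dimension $d$. Then, since each level morphism below $n_0$ is recognizable, the composed measure transfer map $\sigma_{[0,n_0)}^{\cal M}\colon \cal M(X_{n_0})\to \cal M(X)$ is injective (Proposition~\ref{1.recog} / Corollary~\ref{3m.2}), so $\cal M(X)$ contains a $d$-dimensional subcone. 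As $d$ is arbitrary, $\dim\cal M(X)=\infty$ and hence $e(X)=\infty$.

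This sidesteps precisely the obstacle you identify: there is no need to control which rays remain extreme in the inverse limit, because linear independence of finitely many vectors is stable under injective linear maps, whereas extremality is not. Your proposed route could perhaps be made to work, but it would require a genuine structural analysis of the inverse-limit cone, which you have not supplied and which is not needed. Note also a minor point on entropy: rather than bounding $p_X(n)$ directly, the paper applies Proposition~\ref{Monteil-bound} (the Boyle--Handelman/Monteil bound $h_X\leq \inf_n \log(\card\,\cal A_n)/\beta_-(n)$), which gives $h_X=0$ immediately once the exponents $\ell(d)$ are chosen large enough.
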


The directive sequences considered in the last corollary are all totally recognizable, and they are ``large'', in that their {\em alphabet rank}, i.e. the 
limit 
inferior of the cardinality of the level alphabets, is infinite. For finite alphabet rank, on the other hand, the condition ``totally recognizable'' can be replaced by a distinctly weaker condition: in this case, the linear map defined by the incidence matrix $M(\sigma_n)$ is for any sufficiently high level $n \geq 0$ a forteriori (from the surjectivity result in Theorem \ref{1.1}) injective on the subspace spanned by the cone $\cal M(X_n)$. 
In the special -- but rather frequent -- case that this injectivity property of the $M(\sigma_n)$ is also true for all low levels, the bijectivity of the map $\frak m$ as in Theorem \ref{1.3} above is a direct consequence of our set-up. 
We thus 
obtain 
(see Corollary \ref{6.2d}):

\begin{cor}
\label{1.5}
Let $X \subset \cal A^\Z$ be a subshift generated by an everywhere growing directive sequence $\bvec \sigma = (\sigma_n)_{n \geq 0}$ of finite alphabet rank. 
Assume that 
for any $n \geq 0$ the incidence matrix $M(\sigma_n)$ is invertible over $\R$. 
Then any invariant measure $\mu$ on the subshift $X$ 
is determined by 
the letter frequency vector associated to $\mu$, 
i.e. by the values $\mu([a_k])$ for all $a_k \in \cal A$.
\end {cor}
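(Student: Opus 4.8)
The plan is to combine the two main tools already assembled in the excerpt: the commutative-diagram set-up relating the measure towers to the letter-frequency (vector) towers, and the general surjectivity result of Theorem \ref{1.1}. The key observation is that finite alphabet rank forces the dimensions of the relevant cones to stabilize, so that surjectivity of the transfer maps upgrades to bijectivity at high levels, and the hypothesis that every $M(\sigma_n)$ is invertible over $\R$ then propagates this bijectivity all the way down to level $0$.

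\medskip

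Concretely, first I would recall from Proposition \ref{1.2} that an invariant measure $\mu$ on $X = X_0$ lifts to a measure tower $\bvec\mu = (\mu_n)_{n \ge 0}$ with $\sigma_n^{\cal M}(\mu_{n+1}) = \mu_n$, and that this tower corresponds bijectively to the vector tower $\bvec v = (\vec v_n)_{n \ge 0}$ with $\vec v_n = \vec v(\mu_n)$ and $M(\sigma_{n+1})\cdot \vec v_{n+1} = \vec v_n$. By Theorem \ref{1.1}, each transfer map $\sigma_n^{\cal M}$ is surjective onto $\cal M(X_n)$, and since $\zeta_{X_n}$ is surjective and the lower row of the diagram commutes, the linear map $M(\sigma_n)$ restricts to a surjection of the span of $\cal C(X_{n+1})$ onto the span of $\cal C(X_n)$. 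Now I would invoke the hypothesis: because each $M(\sigma_n)$ is \emph{invertible} over $\R$, this surjection is in fact a linear isomorphism between the spans of the successive letter-frequency cones; in particular $M(\sigma_n)$ restricts to an \emph{injection} on the span of $\cal C(X_{n+1})$ for every $n \ge 0$.

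\medskip

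The crux is then to read the tower condition $M(\sigma_{n+1})\cdot \vec v_{n+1} = \vec v_n$ as saying that the whole vector tower is reconstructible from its bottom entry $\vec v_0 = \vec v(\mu)$. Indeed, by the injectivity just established, $\vec v_0$ determines $\vec v_1$ uniquely (as the unique preimage lying in the span of $\cal C(X_1)$), which determines $\vec v_2$, and so on by induction; hence the bottom letter-frequency vector $\vec v(\mu)$ determines the entire vector tower $\bvec v$. Applying the bijection of Proposition \ref{1.2} in the reverse direction, $\bvec v$ determines the measure tower $\bvec\mu$, and in particular its bottom entry $\mu_0 = \mu$. Thus $\mu$ is determined by the values $\mu([a_k])$ for $a_k \in \cal A$, which is exactly the assertion. (Here finite alphabet rank guarantees that each $\R^{\cal A_n}$ is finite-dimensional, so that ``invertible over $\R$'' is an unambiguous and usable hypothesis and the inductive reconstruction stays inside finite-dimensional spaces.)

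\medskip

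The step I expect to require the most care is verifying that the commutativity $\zeta_{X_n} \circ \sigma_n^{\cal M} = M(\sigma_n)\circ \zeta_{X_{n+1}}$ together with the surjectivity of $\zeta_{X_n}$ genuinely transports the injectivity of $M(\sigma_n)$ back to injectivity of $\sigma_n^{\cal M}$ on the measure cone level — or, more economically, arguing entirely on the vector-tower side so that Proposition \ref{1.2} does all the bookkeeping and the only analytic input needed is the surjectivity from Theorem \ref{1.1}. I would favor the latter route, since it reduces the whole statement to a finite-dimensional linear-algebra argument about the maps $M(\sigma_n)$ restricted to the cone spans, with Proposition \ref{1.2} providing the sole, purely formal, translation between measures and vectors.
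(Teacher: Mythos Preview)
Your proposal is correct and follows essentially the same route as the paper: the paper first observes that invertibility of each $M(\sigma_n)$ forces all cone dimensions $c_n = \dim \cal C(X_n)$ to coincide (so $\bvec\sigma$ is ``thin''), and then invokes Proposition~\ref{6.1d}, whose proof is precisely your reconstruction-of-the-vector-tower argument from its bottom entry. (One small quibble: each level alphabet $\cal A_n$ is finite by definition, so ``finite alphabet rank'' is not what makes $\R^{\cal A_n}$ finite-dimensional; in fact the invertibility hypothesis already forces $\card\,\cal A_n = \card\,\cal A_0$ for all $n$, so the rank hypothesis is automatic here rather than an extra ingredient.)
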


This generalizes a result of \cite{B+Co}, obtained under additional hypotheses by very different methods.

\smallskip

A slightly more general situation than considered in Theorem \ref{1.3}, which deserves some particular interest, occurs if the given directive sequence is only {\em eventually recognizable}, i.e. only for sufficiently high levels one assumes that the level morphisms are recognizable in the corresponding level subshift. In section \ref{sec:pseudo-A} we investigate non-recognizable morphisms, and in particular we show 
in Corollary \ref{2-interesting-sequences} 
the following result, which is somewhat surprising, in view of the claims in \cite{DDMP} and \cite{Espinoza} 
(see 
Remark \ref{last}).

\begin{prop}
\label{1.6}
For any integer $n_0 \geq 0$ there exists an everywhere growing directive sequence $\bvec \sigma = (\sigma_n)_{n \geq 0}$ with the following properties:
\begin{enumerate}
\item
For any $n \geq n_0$ the level alphabets satisfy $\cal A_n = \cal A_{n_0}$ and the level morphisms are stationary: $\sigma_n = \sigma_{n_0}$. 
Furthermore, 
each level morphisms $\sigma_n$ is recognizable in the level subshift $X_{n+1}$.
\item
For any level $n$ with $0 \leq n \leq n_0 -1$ we have 
$\card(\cal A_n) = n+2 = \card(\cal A_{n+1})-1$, 
and none of the level morphisms $\sigma_n$ is recognizable in the level subshift $X_{n+1}$. 
\item
All level subshifts $X_n$ are minimal, uniquely ergodic, and aperiodic. 
\end{enumerate}
(In fact, each level subshift $X_n$ is actually an interval exchange subshift, obtained 
from the 
stable lamination of a pseudo-Anosov homeomorphism on a suitably punctured surface.)
\end{prop}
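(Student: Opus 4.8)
The plan is to construct the directive sequence $\bvec \sigma$ explicitly by splicing together two pieces: a \emph{bottom segment} realizing conditions (2) and (3) for levels $0 \leq n \leq n_0-1$, and a \emph{stationary tail} realizing condition (1) for levels $n \geq n_0$. The natural tool for both is the theory of pseudo-Anosov homeomorphisms on punctured surfaces, whose stable laminations give rise to interval exchange transformations, and hence to minimal, uniquely ergodic, aperiodic interval exchange subshifts. This immediately secures condition (3). For the stationary tail I would take $\sigma_{n_0}$ to be the substitution associated to a fixed pseudo-Anosov map (or its iterate) whose incidence matrix is Perron--Frobenius and whose associated substitution is known to be recognizable in its own subshift; iterating it stationarily gives condition (1), since a primitive aperiodic substitution is recognizable in the subshift it generates (this is the classical Moss\'e/B.~Moss\'e type recognizability for aperiodic primitive substitutions).

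For the bottom segment I would design, for each level $n < n_0$, a morphism $\sigma_n : \cal A_{n+1}^* \to \cal A_n^*$ with $\card(\cal A_n) = n+2$ and $\card(\cal A_{n+1}) = n+3$, so that the alphabet cardinality increases by exactly one as $n$ decreases, while arranging that $\sigma_n$ fails to be recognizable in $X_{n+1}$. The key geometric idea is to realize each level subshift as the interval exchange subshift of a pseudo-Anosov map on a surface with a controlled number of punctures, where adding a puncture (or performing a suitable fold/collapse) increases the number of intervals by one and simultaneously destroys recognizability: non-recognizability typically comes from the morphism identifying or collapsing data that cannot be uniquely desubstituted, which is exactly what a non-injective-on-cutting-data fold produces. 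Since we have $n_0$ levels to fill and the bottom alphabet has cardinality $2$, the cardinalities run $2, 3, \ldots, n_0+1$ from level $0$ to level $n_0$, matching condition (2); I would verify that each chosen $\sigma_n$ has a primitive incidence matrix so that the whole sequence is everywhere growing and every level subshift remains minimal and uniquely ergodic.

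I expect the main obstacle to be the \emph{simultaneous} realization of non-recognizability at every bottom level together with preservation of unique ergodicity and minimality across the splice, especially at the junction level $n_0$ where the non-recognizable bottom part meets the recognizable stationary tail. Concretely, one must ensure that the generated subshift $X_{n_0}$ is simultaneously the stable-lamination interval exchange subshift for the stationary tail \emph{and} admits a non-recognizable desubstitution $\sigma_{n_0-1}$ onto $X_{n_0-1}$; reconciling these two descriptions of the same subshift is the delicate point. The cleanest way to handle this is to keep the entire construction inside the world of measured interval exchanges coming from pseudo-Anosov maps, so that unique ergodicity is automatic from the uniqueness of the transverse measure on the stable lamination, and to arrange the non-recognizable folds as explicit surface operations (puncturing a periodic point, or a Rauzy--Veech type induction step that is not invertible at the symbolic level). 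Recognizability of the stationary tail then follows from the aperiodic primitive substitution theory, and the everywhere-growing property follows from primitivity of all incidence matrices. The final step is to assemble these pieces and check that conditions (1)--(3) hold as stated, with the parenthetical interval-exchange assertion being a direct consequence of the construction rather than an additional claim to prove.
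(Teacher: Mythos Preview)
Your plan is essentially the paper's own construction: start from a pseudo-Anosov on the once-punctured torus, iterate the operation ``puncture at a fixed point on an $h$-fixed leaf of the stable lamination'' $n_0$ times, and then stabilize with the substitution $\tau_{n_0}$ coming from the pseudo-Anosov on the top surface. Two technical points that you leave implicit are made explicit in the paper, and they are precisely what resolves the splice obstacle you correctly flag.

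First, the natural morphism $\sigma_n$ arising from filling the puncture back in is \emph{letter-to-letter} (it simply identifies the two letters produced by doubling the leaf), so it is not by itself everywhere growing and certainly does not have a primitive incidence matrix. The paper therefore replaces it by $\sigma'_n := \tau_n^{t'(n)} \circ \sigma_n$ for a suitable power of the level substitution, which makes $M(\sigma'_n)$ positive while preserving non-recognizability in $X_{n+1}$ (since $\tau_n$ is recognizable, a factorization through $\sigma_n$ still fails shift-orbit injectivity).

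Second, the reason the level subshifts match up across the splice is the commutation relation $\tau_n^{t(n)} \circ \sigma_n = \sigma_n \circ \tau_{n+1}$, which holds automatically because $h_{n+1}$ is \emph{induced} from a power of $h_n$ by puncturing at a fixed point. This single relation guarantees simultaneously that $\sigma'_n(X_{n+1}) = X_n$, that $X_{n_0}$ really is the substitutive subshift of $\tau_{n_0}$, and that the bottom segment and stationary tail glue coherently. Once you record this commutation, the rest of your outline (Moss\'e-type recognizability for the primitive aperiodic tail, minimality and unique ergodicity from the pseudo-Anosov lamination) goes through exactly as you describe.
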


\medskip
\noindent
{\em Acknowledgements:}
The authors would like to thank Fabien Durand and Samuel Petite 
for encouraging remarks and interesting comments.


\section{Terminology, notation, conventions and some quotes}
\label{sec:2}

In this section we first recall some standard terminology from symbolic dynamics (see subsection \ref{sec:2.1}), then summarize the notation introduced in \cite{PartI} and some of its results (see subsection \ref{sec:2.2}), and in subsection \ref{sec:2.3} we recall some classical $S$-adic terminology and quote the main result from \cite{BHL1}, which plays a key role later in this paper.

\subsection{Standard terminology from symbolic dynamics}
\label{sec:2.1}

${}^{}$
\smallskip

Throughout this paper we denote by $\cal A, \cal B$ or $\cal C$ non-empty finite sets, called {\em alphabets}, 
and by $\cal A^*, \cal B^*$ or $\cal C^*$ the free monoid over those alphabets. Every element $w \in \cal A^*$
is a word in the {\em letters} $a_1, a_2, \ldots, a_d$ of $\cal A$, i.e. 
$$w = x_1 x_2 \ldots x_n \qquad \text{with} \qquad x_i \in \{a_1, a_2, \ldots, a_d\} = \cal A$$
for any $i = 1, \ldots, n\,$, and the empty word is denoted by $\epsilon$. 
Here $n$ is the {\em length} of $w$, denoted by $|w|$, and one sets $|\epsilon| = 0$. We immediately verify the formula $|w| = \underset{a_j \in \cal A}{\sum} |w|_{a_j}\,$, where $|w|_{a_j}$ denotes the number of occurrences of the letter $a_j$ in $w$. More generally, for any second word $u \in \cal A^*$ we denote by $|w|_u$ the number of (possibly overlapping) occurrences of $u$ as subword $x_k \ldots x_\ell$ (also called a {\em factor}) of $w$.

\smallskip

Any monoid morphism $\sigma: \cal A^* \to \cal B^*$ is determined by the family of letter images $\sigma(a_i) \in \cal B^*$ for all $a_i \in \cal A$,
and this family can be chosen freely.
Such a morphism $\sigma$ is {\em non-erasing} if $|\sigma(a_i)| \geq 1$ for 
all 
$a_i \in \cal A$. All morphisms considered in this paper will be non-erasing. Note that any composition of non-erasing morphisms is non-erasing.

Every monoid morphism $\sigma: \cal A^* \to \cal B^*$ 
induces canonically a linear map $\R_{\geq 0}^\cal A \to \R_{\geq 0}^\cal B\,$, given by the 
{\em incidence matrix}
\begin{equation}
\label{incidence-m}
M(\sigma) = (|\sigma(a_j)|_{b_i})_{b_i \in \cal B, \, a_j \in \cal A}\, .
\end{equation}

\smallskip

To any alphabet $\cal A$ there is also associated the {\em full shift} $\cal A^\Z$; its elements are written as biinfinite words
\begin{equation}
\label{eq2.1.5e}
{\bf x} = \ldots x_{i-1} x_i x_{i+1} \ldots
\end{equation}
with $x_i \in \cal A$ for any index $i \in \Z$. The set $\cal A^\Z$ is naturally equipped with the product topology (with respect to the discrete topology on $\cal A$), 
and $\cal A^\Z$ is a Cantor set unless $\card(\cal A) = 1$.
Furthermore, the space $\cal A^\Z$ comes naturally with a shift-operator 
$T$, defined 
for any ${\bf x}$ as in (\ref{eq2.1.5e}) 
by $T({\bf x}) 
= \ldots y_{i-1} y_i y_{i+1} \ldots$ with $y_i = x_{i+1}$ for any $i \in \Z\,$.
The shift-operator acts as homeomorphism on the space $\cal A^\Z\,$; 
for convenience it will always be denoted by the symbol $T$, independently of 
the choice of the given alphabet $\cal A$. 

For any integers $k \leq l$ we denote by ${\bf x}_{[k, \ell]}$ the subword (again also called {\em factor}) $x_k \ldots x_\ell$ of the biinfinite word ${\bf x}$ as in (\ref{eq2.1.5e}). 
We also consider 
the one-sided infinite {\em positive half-word} ${\bf x}_{[1, \infty)} = x_1 x_2 \ldots$ of $\bf x$.

To any word $w \in \cal A^*$ there is associated the {\em cylinder} $[w] \subset \cal A^\Z\,$, which consists of all words ${\bf x} \in \cal A^\Z$ which satisfy ${\bf x}_{[1, |w|]} = w$. If $w$ is the empty word, then $[w] = \cal A^\Z$. The set of all cylinders $[w]$ together with their shift translates $T^m([w])$ for any $m \in \Z$ constitute a basis for the 
above specified 
topology of the space $\cal A^\Z$.

A non-empty subset $X \subset \cal A^\Z$ is a {\em subshift} if $X$ is closed and if $T(X) = X$. A subshift $X$ is {\em minimal} if none of its subsets is a subshift except $X$ itself. This is equivalent to the statement that for any ${\bf x} \in X$ the {\em shift-orbit} $\cal O({\bf x}) = \{T^m({\bf x}) \mid m \in \Z\}$ is dense in $X$. A minimal subshift $X$ is either uncountably infinite or else it
is finite: in this case $X$ 
consists of the single shift-orbit $X = \cal O(w^{\pm \infty})$ of some periodic word $w^{\pm \infty} = \ldots w w w \ldots\, $, which is well defined for any non-empty $w \in \cal A^*$ by the convention $w^{\pm \infty}_{[1, \infty)} = w w w \ldots\, \, $. It follows that any infinite minimal subshift is 
in particular 
{\em aperiodic}, which means that $X$ doesn't contain any 
periodic word $w^{\pm \infty}$.

Any subshift $X \subset \cal A^\Z$ defines a {\em language} $\cal L(X)$ which consists of all words $w \in \cal A^*$ that occur as factor in some ${\bf x} \in X$. Conversely, 
every infinite subset $\cal L \subset \cal A^*$ {\em generates} a subshift $X(\cal L) \subset \cal A^\Z$, defined by the property that any word from $\cal L(X)$ must occur as factor in some $w' \in \cal L$.

For any subshift $X \subset \cal A^\Z$ and any $n \in \N$ one denotes by $p_X(n)$ the number of words in $\cal L(X)$ of length $n$. The following limit is well defined and is known as {\em topological entropy} $h_X$ of the subshift $X$:
\begin{equation}
\label{eq2.ent}
h_X = \lim_{n \to \infty} \frac{\log p(n)}{n}
\end{equation}

\smallskip
Any non-erasing monoid morphism $\sigma: \cal A^* \to \cal B^*$ defines canonically a map
\begin{equation}
\label{eq2.3e}
\sigma^\Z: \cal A^\Z \to \cal B^\Z
\end{equation}
where for any ${\bf x} \in \cal A^\Z$ the image ${\bf y} = \sigma^\Z({\bf x}) \in \cal B^\Z$ is defined by extending $\sigma$ first to the positive half-word ${\bf x}_{[1, \infty)}$ to define ${\bf y}_{[1, \infty)}$, and subsequently extending $\sigma$ to all of $\bf x$. 

For almost all subshifts $X \subset \cal A^Z$ the image set $\sigma^\Z(X)$ will not be shift-invariant and hence not be a subshift. However, there is a canonical {\em image subshift} $\sigma(X)$ of $X$, which admits several naturally equivalent definitions:

\begin{rem}
\label{image-shift}
The following three definitions of the {image subshift} $Y := \sigma(X)$ are equivalent, for any non-erasing monoid morphism $\sigma: \cal A^* \to \cal B^*$:
\begin{enumerate}
\item
$Y$ is the intersection of all subshifts that contain the set $\sigma^\Z(X)$.

\item
$Y$ is the 
the union of all shift-orbits $\cal O(\sigma({\bf x}))$, 
for any ${\bf x} \in X$. 
[Note here (see Lemma 2.4 of \cite{PartI}) that this union is always closed, 
a fact that 
a priori 
can not be taken for granted.]
\item
$Y$ is the subshift generated by the language $\sigma(\cal L(X))$. Thus $Y$ consists of all biinfinite words ${\bf y} \in \cal B^\Z$ with the property that 
every 
factor of $\bf y$ is also a factor of some word in $\sigma(\cal L(X))$.
\end{enumerate}
\end{rem}

We observe directly the following consequence:

\begin{lem}
\label{2.2e}
Let $\sigma: \cal A^* \to \cal B^*$ be a non-erasing monoid morphism, and let $X \subset \cal A^\Z$ be any subshift. If $\cal L \subset \cal A^*$ is a language that generates $X$, then $\sigma(\cal L)$ generates $\sigma(X)$.
\qed
\end{lem}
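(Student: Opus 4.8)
The plan is to reduce the statement to definition (3) of the image subshift in Remark \ref{image-shift}, which asserts that $\sigma(X)$ is the subshift generated by the language $\sigma(\cal L(X))$. Writing, for any language $\cal K \subset \cal A^*$, its \emph{factor set} $\mathrm{Fac}(\cal K)$ for the collection of all factors of words in $\cal K$, I would first record the elementary fact that the generated subshift $X(\cal K)$ depends on $\cal K$ only through $\mathrm{Fac}(\cal K)$: by definition a biinfinite word lies in $X(\cal K)$ exactly when each of its factors belongs to $\mathrm{Fac}(\cal K)$, and $\mathrm{Fac}(\mathrm{Fac}(\cal K)) = \mathrm{Fac}(\cal K)$, so in particular $X(\cal K_1) = X(\cal K_2)$ whenever $\mathrm{Fac}(\cal K_1) = \mathrm{Fac}(\cal K_2)$. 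Since by hypothesis $\cal L$ generates $X$, we have $\cal L(X) = \mathrm{Fac}(\cal L)$, so the assertion ``$\sigma(\cal L)$ generates $\sigma(X)$'' amounts to the identity $X(\sigma(\cal L)) = X(\sigma(\mathrm{Fac}(\cal L)))$; by the remark above it suffices to prove $\mathrm{Fac}(\sigma(\cal L)) = \mathrm{Fac}(\sigma(\mathrm{Fac}(\cal L)))$.

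The key (and essentially only) point is that a monoid morphism respects the factor relation. If $u$ is a factor of $w$, say $w = p\, u\, q$ with $p, q \in \cal A^*$, then $\sigma(w) = \sigma(p)\, \sigma(u)\, \sigma(q)$, so $\sigma(u)$ is a factor of $\sigma(w)$, and therefore every factor of $\sigma(u)$ is again a factor of $\sigma(w)$. This single observation yields both inclusions.

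From here I would argue as follows. The inclusion $\mathrm{Fac}(\sigma(\cal L)) \subset \mathrm{Fac}(\sigma(\mathrm{Fac}(\cal L)))$ is immediate from $\cal L \subset \mathrm{Fac}(\cal L)$. For the reverse inclusion, any element of $\mathrm{Fac}(\sigma(\mathrm{Fac}(\cal L)))$ is a factor of $\sigma(u)$ for some $u \in \mathrm{Fac}(\cal L)$; choosing $w \in \cal L$ having $u$ as a factor, the observation of the previous paragraph shows that this element is a factor of $\sigma(w)$ and hence lies in $\mathrm{Fac}(\sigma(\cal L))$. Combining the two inclusions gives $\mathrm{Fac}(\sigma(\cal L)) = \mathrm{Fac}(\sigma(\mathrm{Fac}(\cal L)))$, and thus $X(\sigma(\cal L)) = X(\sigma(\cal L(X))) = \sigma(X)$ as desired.

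I do not expect a serious obstacle here: the whole content is the trivial interaction of $\sigma$ with the factor relation, valid because $\sigma$ is a morphism. The only things to watch are bookkeeping — keeping the three languages $\cal L$, $\mathrm{Fac}(\cal L) = \cal L(X)$, and their $\sigma$-images distinct — and invoking the correct equivalent definition of the image subshift, namely Remark \ref{image-shift}(3). Note that non-erasingness of $\sigma$ plays no role in the factor argument itself; it is needed only to ensure that $\sigma(X)$ and the associated maps are well defined in the first place.
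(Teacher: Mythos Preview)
Your argument is correct and is exactly the unpacking of what the paper intends: the lemma is stated there without proof (it ends with a \qed\ and is introduced as ``We observe directly the following consequence''), so the paper's ``proof'' is simply the observation that the claim follows from definition (3) in Remark~\ref{image-shift}. Your reduction to the identity $\mathrm{Fac}(\sigma(\cal L)) = \mathrm{Fac}(\sigma(\mathrm{Fac}(\cal L)))$ via the trivial compatibility of monoid morphisms with the factor relation is precisely the content hidden behind that one-line dismissal.
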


\medskip

An {\em invariant measure} on $\cal A^\Z$ is a finite Borel measure $\mu$ on $\cal A^\Z$ which is invariant under the homeomorphism $T$ (= the shift operator). The set of all such invariant measures is denoted by $\cal M(\cal A^\Z)$. For any subshift $X \subset \cal A^\Z$ we denote by $\cal M(X) \subset \cal M(\cal A^\Z)$ the set of those invariant measures $\mu$ for which their support satisfies $\supp(\mu) \subset X$. For notational convenience we identify any such $\mu$ with its restriction to $X$.

Any invariant measure $\mu \in \cal M(\cal A^\Z)$ defines a function
$$\cal A^* \, \to \, \R_{\geq 0}\,\, , \,\,\,\, w \mapsto \mu([w])$$
which for convenience is also denoted by $\mu$, yielding $\mu(w) = \mu([w])$ for any $w \in \cal A^*$. This function is a {\em weight function} in that it satisfies the 
{\em Kirchhoff equalities}
\begin{equation}
\label{Kirchhoff}
\mu(w) = \sum_{a_i \in \cal A} \mu(a_i w) = \sum_{a_i \in \cal A} \mu(w a_i)
\end{equation}
for any $w \in \cal A^*$. 
Conversely, it is well known that any weight function $\mu: \cal A^* \to \R_{\geq 0}$ defines an invariant measure $\mu \in \cal M(\cal A^\Z)$ which satisfies $\mu([w]) = \mu(w)$. The set $\cal M(\cal A^\Z)$ can hence be understood as subset of the infinite dimensional non-negative cone $\R_{\geq 0}^{\cal A^*} = \{\underset{w \in \cal A^*}{\sum} x_w \vec e_{w} \mid x_w \geq 0\}$, from which it inherits the product topology; the latter coincides with the
more generally known {\em weak$^*$-topology} on the measure cone $\cal M(\cal A^\Z)$.

A measure $\mu \in \cal M(\cal A^\Z)$ is a {\em probability measure} if its total mass satisfies $\mu(\cal A^\Z) = 1$. A measure $\mu \in \cal M(\cal A^\Z)$ is  {\em ergodic} if $\mu$ can not be written as 
linear combination with positive coefficients 
of two distinct probability measures. 
For any subshift $X \subset \cal A^\Z$ the number $e(X)$ of ergodic probability measures in $\cal M(X)$ can be finite or infinite; it is equal to the dimension of the linear convex cone $\cal M(X) \subset \R_{\geq 0}^{\cal A^*}$. For any subshift $X \subset \cal A^\Z$ we have $e(X) \geq 1$; if $e(X) = 1$ the subshift $X$ is called {\em uniquely ergodic}.

The support $\supp(\mu)$ of any $\mu \in \cal A^\Z$ is always a subshift $X \subset \cal A^\Z$; if $\mu$ is ergodic, then $X = \supp(\mu)$ is a minimal subshift. The converse conclusion doesn't hold (see 
section \ref{sec:Cyr-Kra} 
below). 

\smallskip

Any 
non-trivial word 
$w \in \cal A^*$ defines a {\em characteristic measure} $\mu_w \in \cal M(\cal A^\Z)$: if $w$ is not
a {proper power}, then $\mu_w$ is given by 
\begin{equation}
\label{eq:charac-m}
\mu_w(B) \,\, := \,\, \card(B \cap \cal O(w^{\pm\infty}))
\end{equation}
for any measurable set $B \subset \cal A^\Z$.
If on the other hand $w = {u}^m$ for some $u \in \cal A^*$ and some integer $m \geq 2$, where $u$ is assumed not to be a proper power, then one has 
$$\mu_w \, \, :=\,\,  m \cdot \mu_{u}$$
In either case, it follows that $\frac{1}{|w|} \mu_w$ is a probability measure.
The set of weighted characteristic measures $\lambda \, \mu_w$ 
(for any $\lambda > 0$) is known to be dense in $\cal M(\cal A^\Z)$.
The support of any characteristic measure is given by 
\begin{equation}
\label{last-minute}
\supp(\mu_w) = \cal O(w^{\pm \infty})\, .
\end{equation}

\smallskip

To any alphabet $\cal A$ one associates canonically the non-negative 
{\em alphabet cone} $\R_{\geq 0}^\cal A = \{\underset{a_k \in \cal A}{\sum} x_k \vec e_{a_k} \mid x_k \geq 0\}$. 
For any invariant measure $\mu$ on $\cal A^\Z$ 
the evaluation on the letter cylinders $[a_k]$ for all $a_k \in \cal A$ defines 
a {\em letter frequency vector}
\begin{equation}
\label{eq2.3.1}
\vec v(\mu) :=\sum_{a_k \in \cal A} \mu([a_k]) \, \vec e_{a_k} \, ,
\end{equation}
so that one has a canonical 
$\R_{\geq 0}$-linear map of cones, 
denoted by
\begin{equation}
\label{eq2.5e}
\zeta_\cal A: \cal M(\cal A^\Z) \to \R_{\geq 0}^\cal A \, , \,\, \mu \mapsto 
\vec v(\mu) \, .
\end{equation}
For any subshift $X \subset \cal A^\Z$ 
the restriction of 
this map to $\cal M(X)$ 
will be denoted by $\zeta_X$.
The image of this map is a cone, denoted by 
\begin{equation}
\label{eq2.3.3}
\cal C(X) := \zeta_X(\cal M(X)) \subset \R_{\geq 0}^\cal A\, ,
\end{equation}
and called the {\em letter frequency cone} of the subshift $X$. For simplicity we will below, for any subshift $X \subset \cal A^\Z$ and any morphism $\sigma: \cal A^* \to \cal B^*\,$, use the symbol $M(\sigma)$ to denote all three linear maps 
\begin{equation}
\label{eq2.8.5e}
\text{$\R^\cal A \to \R^\cal B\, , \,\, \R_{\geq 0}^\cal A \to \R_{\geq 0}^\cal B\,\,$ and $\,\,\cal C(X) \to \cal C(\sigma(X))$}
\end{equation}
defined by the incidence matrix of the 
morphism $\sigma$.

\medskip

More details about these basic facts and some references can be found in section 2 of \cite{PartI}.

\subsection{Measures on subshifts via vector towers on directive sequences}
\label{sec:2.3}

${}^{}$
\smallskip

In order to state Theorem \ref{thm1} below, which is the main purpose of this subsection, we first recall some standard notation that is also used later.

\smallskip

A {\em directive sequence} $\bvec \sigma = (\sigma_n)_{n \geq 0}$ consists of {\em level morphisms} 
\begin{equation}
\label{eq2.1d}
\sigma_n: \cal A_{n+1}^* \to \cal A_n^*
\end{equation}
for any {\em level} $n \geq 0$, 
where each $\cal A_n$ is a finite non-empty set, called the {\em level $n$ alphabet}.
We sometimes use the less formal but more suggestive notation
$$\bvec \sigma = \sigma_0 \circ \sigma_{1} \circ \sigma_{2} \circ \ldots$$
to denote a directive sequence.

For any integers $m >  n \geq 0$ we define the {\em telescoped level morphism} 
$$\sigma_{[n, m)} := \sigma_n \circ \sigma_{n+1} \circ \ldots \circ \sigma_{m-1}$$
as well as the level $n$ {\em truncated} directive sequence 
\begin{equation}
\label{eq2.trunc}
\bvec \sigma \chop_{\! n} = (\sigma_k)_{k \geq n} \, .
\end{equation}

\smallskip

Any directive sequence $\bvec \sigma$ as in (\ref{eq2.1d}) above {\em generates} a subshift $X = X_{\tiny \bvec \sigma}$ over the {\em base alphabet} $\cal A_0\,$, defined by the convention that 
${\bf x} \in \cal A_0^\Z$
belongs to $X$ if and only if 
for any finite factor $w$ of $\bf x$ there exists some level $n \geq 1$ and some letter $a_j \in \cal A_n$ such that $w$ is also a factor of $\sigma_{[0, n-1)}(a_j)$.

\smallskip

For any level $n \geq 0$ a directive sequence $\bvec \sigma$ as above defines an {\em intermediate level subshift} 
$X_n \subset \cal A_n^\Z$ 
which is generated by the truncated sequence $\bvec \sigma\chop_{\! n}\,$:
\begin{equation}
\label{eq2.11e}
X_n := X_{\tiny \bvec \sigma\chop_{\! n}}
\end{equation}
The subshift $X_n$ is the {image subshift} of the analogously defined level $n+1$ intermediate subshift $X_{n+1}$ under the morphism $\sigma_n \,$, i.e.:
\begin{equation}
\label{eq2.11.5e}
X_n = \sigma_n(X_{n+1}) \qquad \text{for any level} \qquad n \geq 0
\end{equation}

\smallskip

In this paper we will almost exclusively consider directive sequences which are {\em everywhere growing}, by which we mean that the sequence of 
{\em minimal level letter image lengths}
\begin{equation}
\label{eq2.9e}
\beta_-(n) := \min{\big \{}\,|\sigma_{[0, n-1)}(a_j)| \,\,{\big |} \,\,a_j \in \cal A_n {\big \}}
\end{equation}
tends to $\infty$ for $n \to \infty$. We have (see for instance Proposition 5.10 of \cite{BHL1}):

\begin{fact}
\label{2.1d}
For every subshift $X \subset \cal A^\Z$ there exists an everywhere growing directive sequence $\bvec \sigma$ that generates 
$X$. More precisely, using the notation from (\ref{eq2.1d}), one has
\begin{equation}
\label{more-pr}
\cal A_0 = \cal A \qquad \text{and} \qquad X_{\tiny \bvec \sigma} = X.
\end{equation}
\end{fact}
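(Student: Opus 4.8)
The plan is to read the directive sequence directly off the language of $X$, taking as the level-$n$ alphabet the set of admissible words of a geometrically growing length and letting the level morphisms cut a word into equal consecutive blocks. Concretely, set $\ell_n := 2^n$ and write $\cal L_k(X) := \{w \in \cal L(X) : |w| = k\}$; since $X$ is non-empty, every biinfinite word of $X$ contributes factors of each length, so each $\cal L_k(X)$ is a non-empty finite set. I would put $\cal A_0 := \cal A$ and, for $n \geq 1$, let $\cal A_n$ be an abstract alphabet in bijection $u \mapsto \overline u$ with $\cal L_{\ell_n}(X)$. For $n \geq 1$ define $\sigma_n : \cal A_{n+1}^* \to \cal A_n^*$ on a letter $\overline u$ (with $u \in \cal L_{\ell_{n+1}}(X)$) by cutting $u$ into its $\ell_{n+1}/\ell_n = 2$ consecutive blocks $u = u' u''$ of length $\ell_n$ and setting $\sigma_n(\overline u) := \overline{u'}\,\overline{u''}$; each block is a factor of $u$, hence lies in $\cal L_{\ell_n}(X)$, so this is well defined, and visibly non-erasing. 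Finally define $\sigma_0 : \cal A_1^* \to \cal A^*$ by $\sigma_0(\overline u) := u$, spelling the admissible word $u \in \cal L_{\ell_1}(X) \subseteq \cal A^*$ out letter by letter.

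Next I would record the key telescoping identity $\sigma_{[0,n)}(\overline u) = u$ for every $u \in \cal L_{\ell_n}(X)$, proved by a one-line induction: $\sigma_{[0,n)}(\overline u) = \sigma_{[0,n-1)}\big(\sigma_{n-1}(\overline u)\big)$ spells out the concatenation of the length-$\ell_{n-1}$ blocks of $u$, which reassembles $u$ exactly because $\ell_{n-1}$ divides $\ell_n$. In particular the telescoped letter images all have length $\min\{|\sigma_{[0,n)}(a)| : a \in \cal A_n\} = \ell_n \to \infty$, so $\bvec \sigma$ is everywhere growing, and $\cal A_0 = \cal A$ as required.

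It then remains to check $X_{\bvec \sigma} = X$, which I would do at the level of languages. For the inclusion $X_{\bvec \sigma} \subseteq X$, note that every telescoped letter image $\sigma_{[0,n)}(\overline u) = u$ lies in $\cal L(X)$, so every factor entering the defining condition of $X_{\bvec \sigma}$ is admissible in $X$; hence $\cal L(X_{\bvec \sigma}) \subseteq \cal L(X)$. For the reverse inclusion, take any $w \in \cal L(X)$ and any $n$ with $\ell_n \geq |w|$: extending $w$ inside a biinfinite word of $X$ produces an admissible word $u$ of length exactly $\ell_n$ containing $w$, so $w$ is a factor of $u = \sigma_{[0,n)}(\overline u)$ with $\overline u \in \cal A_n$. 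Thus the factors of the telescoped letter images are exactly the words of $\cal L(X)$, and since a subshift is recovered from its language this gives $X_{\bvec \sigma} = X$.

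The single substantive point — and the step I would be most careful about — is the inclusion $X \subseteq X_{\bvec \sigma}$, i.e. that no admissible word is lost by the chosen alphabets. This is exactly what forces the choice of the full set $\cal L_{\ell_n}(X)$ (rather than, say, the factors of one orbit, which would only recover a minimal piece of a non-minimal $X$) as the level-$n$ alphabet, combined with the elementary extension property of subshifts that every factor prolongs to factors of arbitrary length. The divisibility $\ell_n \mid \ell_{n+1}$ plays the auxiliary but convenient role of making the block morphisms telescope to exact spelling-out, so that both the growth of the lengths and the identity $X_{\bvec \sigma} = X$ can be read off without having to track block overlaps.
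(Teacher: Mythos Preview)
Your proof is correct. The paper does not actually supply its own proof of this fact: it merely records it with a citation to Proposition~5.10 of \cite{BHL1}, so there is no in-paper argument to compare against. Your construction---taking as level-$n$ alphabet the set $\cal L_{2^n}(X)$ of admissible words of length $2^n$ and letting each level morphism cut a word into its two halves---is the standard one in the literature (one often sees an arbitrary sequence $\ell_n$ with $\ell_n \mid \ell_{n+1}$ in place of $2^n$, but this makes no difference). The telescoping identity $\sigma_{[0,n)}(\overline u) = u$ gives both ``everywhere growing'' and the language equality $\cal L(X_{\bvec\sigma}) = \cal L(X)$ in one stroke, exactly as you wrote, and your remark that the full $\cal L_{\ell_n}(X)$ (not just the factors of one orbit) is needed to avoid collapsing onto a minimal component is well taken.
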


\begin{rem}
\label{2.11x}
If a directive sequence $\bvec \sigma = (\sigma_n)_{n \geq 0}$ is everywhere growing, it could well be that some of the telescoped level maps $\sigma_{[n,m)}: \cal A_{m+1}^* \to \cal A_n^*$ maps a generator $a_i \in \cal A_{m+1}$ to the empty word in $\cal A_n^*$. 
In this case, since the equivalences from Remark \ref{image-shift} could fail to hold (see Example 5.1 of \cite{3D+St}), it is advisable to admit as ``generated subshift'' $X_{\tiny \bvec \sigma}$ only those ${\bf x} \in \cal A_0^\Z$ which can be lifted to any level of $\bvec \sigma$.

With this alteration 
a simple direct argument shows that 
quotienting out any such ``eventually erased'' letter yields new alphabets $\cal A'_n \subset \cal A_n$ and new level maps $\sigma'_n: {\cal A'}_{n+1}^* \to {\cal A'}_n^*$ such that the issuing directive sequence $\bvec \sigma' = (\sigma'_n)_{n \geq 0}$ is everywhere growing and generates the same subshift as the original sequence $\bvec \sigma$. 
In addition, any of the morphisms $\sigma'_n$ or $\sigma'_{[n, m)}$ is non-erasing.

We can hence (and will from now on quietly) 
assume that any everywhere growing directive sequence consists of non-erasing level maps only.

\end{rem}

\begin{rem}
\label{2.2d}
(1)
A directive sequence $\bvec \sigma$ that generates a subshift $X$ is also called an {\em $S$-adic 
development} (or {\em $S$-adic expansion}) 
of $X$, where $S$ stands sometimes for an (often assumed to be finite) set of 
substitutions 
which contains all level morphisms. 
This concept and in particular the terminology ``$S$-adic'' has been introduced by S. Ferenczi in \cite{F}.
In this context one 
often 
assumes that 
the sequence $\bvec \sigma$ has {\em finite alphabet rank}. By this we mean that 
there is a uniform upper bound to 
the cardinality of any level alphabet, 
so that we can identify all level alphabets with a single finite alphabet $\cal A$.

\smallskip
\noindent
(2) If the set $S$ 
consists 
of a single endomorphism $\sigma: \cal A^* \to \cal A^*$, then the $S$-adic subshift $X$, which is generated by the stationary directive sequence $\bvec \sigma = (\sigma_n)_{n \geq 0}$ with $\sigma_n = \sigma$ for all $n \geq 0$, is called {\em substitutive}. It is important to note that we require here the substitution $\sigma$ (or rather: the above stationary directive sequence $\bvec \sigma$) to be everywhere growing. 
The term ``substitution'' itself is often used synonymous to ``endomorphism of a free monoid'', but sometimes 
(varying) additional conditions are imposed (see for instance
\cite{DP20}).

\smallskip
\noindent
(3)
A very convenient criterion to ensure the condition ``everywhere growing'' is given as follows: 

\begin{enumerate}
\item[($\frak Y$)]
Let $\bvec \sigma = (\sigma_n)_{n \geq 0}$ be a directive sequence, and assume that for every level $n \geq 0$ there exists a level $m > n$ such that the telescoped incidence matrix $M(\sigma_{[n, m)})$ is {\em positive} (i.e. it has all coefficients $> 0$). 
\end{enumerate}
One verifies easily that any directive sequence $\bvec \sigma$ which satisfies the criterion ($\frak Y$) is indeed everywhere growing.

\smallskip
\noindent
(4)
The criterion ($\frak Y$) has another important consequence, namely that the subshift $X$ generated by $\bvec \sigma$ is minimal.
For this conclusion we cite  
Theorem 5.3 of \cite{BD}, proved originally in \cite{Dur00}.
\end{rem}

In \cite{BHL1} to any directive sequence $\bvec \sigma$ as in (\ref{eq2.1d}) there has been associated the set $\cal V(\bvec \sigma)$ of 
{\em vector towers $\bvec v = (\vec v_n)_{n \geq 0}$ 
over
$\bvec \sigma$}.
Such a vector tower\,\footnote{ The terminological specification {\em $\bvec \sigma$-compatible vector tower} used in \cite{BHL2} has been dropped here, as all ``vector towers'' occurring in the present paper satisfy the compatibility condition (\ref{eq2.3.4}) for any $n \geq 0$.}
consists of non-negative vectors 
\begin{equation}
\label{eq2.17.5}
\vec v_n = \underset{a_j \in \cal A}{\sum} 
\vec v_n(a_j) \, \vec e_{a_j} \in \R_{\geq 0}^{\cal A_n}
\end{equation}
that are subject to the {\em compatibility condition}
\begin{equation}
\label{eq2.3.4}
\vec v_n = M(\sigma_n) \cdot \vec v_{n+1}
\end{equation}
for all $n \geq 0$.
It has been shown (see \cite{BHL1}, Remark 9.5) that
for any word $w \in \cal A_0^*$ 
and any such vector tower $\bvec v$
the sequence of sums 
$$\sum_{a_j \,\in \cal A_n} 
\vec v_n(a_j) \, |\sigma_{[0,n)}(a_j)|_w \, 
$$
is bounded above and increasing, as long as $\bvec \sigma$ is everywhere growing (but no other condition is needed). 
This gives:

\begin{rem}
\label{2.6e}
(1)
The value
\begin{equation}
\label{eq2.2}
\mu^{\tiny \bvec v}(w) := \lim_{n \to \infty} \sum_{a_j \,\in \cal A_n}  
\vec v_n(a_j) \, |\sigma_{[0,n)}(a_j)|_w 
\end{equation}
is well defined for any $w \in \cal A_0^*$. It is shown 
in \cite{BHL1}, 
Propositions 7.4 and 9.4, 
that the issuing function $\mu^{\tiny \bvec v}: \cal A_0^* \to \R_{\geq 0}$ satisfies the Kirchhoff equalities (\ref{Kirchhoff}), so that we can summarize:

\smallskip
\noindent
(2)
Any vector tower $\bvec v$ on an everywhere growing directive sequence $\bvec \sigma$ defines via equality (\ref{eq2.2}) an invariant measure on the subshift $X$ generated by $\bvec \sigma$, denoted by $\mu^{\tiny \bvec v} \in \cal M(X)$.
\end{rem}

In terms of $S$-adic language, the main result of \cite{BHL1} translates directly into the following (see also section 3 of \cite{BHL2}):

\begin{thm}[{\cite{BHL1}}]
\label{thm1}
Let $\bvec \sigma = (\sigma_n)_{n \geq 0}$ be an everywhere growing directive sequence 
which generates the  
subshift $X :=X_{\tiny \bvec \sigma}$. 
Then the map
$$
{\frak m}_{\tiny \bvec \sigma}
: \cal V(\bvec \sigma) \to \cal M(X)\,,\,\, \bvec v \mapsto \mu^{\tiny\bvec v}$$
is linear and surjective.
\qed
\end{thm}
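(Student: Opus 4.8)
The plan is to take for granted, from Remark \ref{2.6e}, that for every vector tower $\bvec v \in \cal V(\bvec\sigma)$ the value $\mu^{\bvec v}(w)$ is well defined by the monotone bounded limit (\ref{eq2.2}) and that the resulting weight function is an invariant measure supported on $X$, so that $\frak m_{\bvec\sigma}$ does land in $\cal M(X)$. Linearity is then immediate: for a fixed word $w \in \cal A_0^*$ the level-$n$ partial sum in (\ref{eq2.2}) is an $\R_{\geq 0}$-linear functional of $\bvec v$, and since the defining limits exist for each summand one gets $\mu^{\lambda\bvec v + \bvec v'}(w) = \lambda\,\mu^{\bvec v}(w) + \mu^{\bvec v'}(w)$ for all $\lambda \geq 0$. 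As $\cal V(\bvec\sigma)$ is visibly a convex cone (the compatibility relation (\ref{eq2.3.4}) is linear and non-negativity is preserved), $\frak m_{\bvec\sigma}$ is a linear map of cones, and the entire content of the theorem lies in surjectivity.

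For surjectivity I would first record a quantitative estimate. Writing $M := \sum_{a_i \in \cal A_0}\vec v_0(a_i)$ for the total mass of $\mu^{\bvec v}$ and summing the compatibility $\vec v_0 = M(\sigma_{[0,n)})\,\vec v_n$ over all coordinates gives $M = \sum_{a_j \in \cal A_n} |\sigma_{[0,n)}(a_j)|\,\vec v_n(a_j) \geq \beta_-(n)\sum_{a_j}\vec v_n(a_j)$, whence $\sum_{a_j\in\cal A_n}\vec v_n(a_j) \leq M/\beta_-(n)$. The $n$-th partial sum in (\ref{eq2.2}) misses only those occurrences of $w$ that straddle a boundary between two consecutive level-$n$ blocks $\sigma_{[0,n)}(a_j)$, and the density of such boundaries is at most $\sum_{a_j}\vec v_n(a_j)$, so the tail is bounded by $(|w|-1)\,M/\beta_-(n)$. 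Since $\bvec\sigma$ is everywhere growing, $\beta_-(n)\to\infty$ (see (\ref{eq2.9e})), so the convergence in (\ref{eq2.2}) is \emph{uniform} over towers of bounded mass; hence $\bvec v \mapsto \mu^{\bvec v}$ is continuous for the product topology on $\cal V(\bvec\sigma)$ and the weak$^*$-topology on $\cal M(X)$. The same mass identity shows that $\cal V_1 := \{\bvec v \in \cal V(\bvec\sigma) : M = 1\}$ is a closed subset of the compact product $\prod_{n\geq 0} P_n$, where $P_n = \{\vec v_n \in \R_{\geq 0}^{\cal A_n} : \sum_{a_j}|\sigma_{[0,n)}(a_j)|\,\vec v_n(a_j) = 1\}$ is a compact polytope (its defining functional has all coefficients $\geq 1$); thus $\cal V_1$ is compact. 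Consequently $\frak m_{\bvec\sigma}(\cal V_1)$ is a compact convex subset of the Choquet simplex $\cal M_1(X)$ of invariant probability measures, and by Krein--Milman it suffices to show that every extreme point of $\cal M_1(X)$, i.e. every ergodic probability measure, lies in $\frak m_{\bvec\sigma}(\cal V_1)$; scaling by $\R_{\geq 0}$ then yields all of $\cal M(X)$.

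It remains to realise a given ergodic probability measure $\mu$. Here I would pick a $\mu$-generic point ${\bf x}\in X$: by Birkhoff's theorem, for $\mu$-almost every ${\bf x}$ the frequency of each word $w$ along ${\bf x}$ equals $\mu([w])$. Since $\bvec\sigma$ generates $X$, every long factor of ${\bf x}$ lies inside some $\sigma_{[0,n)}(u)$ with $u \in \cal L(X_n)$, giving a decomposition of ${\bf x}$ into level-$n$ blocks $\sigma_{[0,n)}(a_j)$; I would then set $\vec v_n(a_j)$ equal to the frequency (per unit of level-$0$ length) with which the block-letter $a_j\in\cal A_n$ occurs in this decomposition. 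The relation $\vec v_n(a_i) = \sum_{a_j}|\sigma_n(a_j)|_{a_i}\,\vec v_{n+1}(a_j)$ is precisely the compatibility (\ref{eq2.3.4}), because each level-$n$ letter is produced by applying $\sigma_n$ to the level-$(n+1)$ letters; and $\mu([w]) = \lim_n \sum_{a_j}\vec v_n(a_j)\,|\sigma_{[0,n)}(a_j)|_w$ holds because the only occurrences of $w$ along ${\bf x}$ not counted on the right are the boundary-straddling ones, whose density tends to $0$ by the estimate above. This gives $\frak m_{\bvec\sigma}(\bvec v) = \mu$.

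The main obstacle is exactly this last construction: one must produce a single \emph{coherent} system of level-$n$ decompositions of ${\bf x}$ (a genuine $S$-adic representation), rather than unrelated decompositions at each level, and one must prove that the block-frequencies $\vec v_n(a_j)$ converge and are independent of the residual ambiguity stemming from the non-recognizability of the $\sigma_n$. I would handle the coherence by a compactness (K\"onig's lemma) argument building the tower of desubstitutions, and the convergence by genericity of ${\bf x}$; the everywhere-growing hypothesis enters once more, precisely to make the boundary contributions asymptotically negligible, so that the limiting frequencies are insensitive to how the finitely many boundary blocks in each window are resolved.
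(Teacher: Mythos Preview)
The paper does not prove this theorem: it is quoted from \cite{BHL1} (note the attribution in the theorem header and the terminal tombstone immediately after the statement), so there is no in-paper proof against which to compare your attempt.

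That said, your outline is a reasonable route, and the reduction via compactness of $\cal V_1$, continuity of $\frak m_{\bvec\sigma}$, and Krein--Milman to the ergodic case is sound. The genuine gap is exactly where you flag it, but your proposed fix does not close it. K\"onig-type compactness does produce a coherent desubstitution tower $({\bf x}_n)_{n\geq 0}$ over a $\mu$-generic ${\bf x}$, yet genericity of ${\bf x}={\bf x}_0$ for $\mu$ says nothing about convergence of letter frequencies along the lifted words ${\bf x}_n$ for $n\geq 1$: these ${\bf x}_n$ need not be generic for any measure on $X_n$, so the limits defining your $\vec v_n(a_j)$ have no reason to exist. What does work is a second compactness extraction, now on the \emph{finite-window} empirical frequency vectors at all levels simultaneously (they live in the compact product $\prod_n P_n$ you already introduced); any accumulation point is automatically a vector tower $\bvec v$ by coherence of the desubstitutions, and your uniform tail bound $(|w|-1)\,M/\beta_-(n)$ --- which already holds for the finite-window approximants --- together with level-$0$ genericity of ${\bf x}$ then forces $\mu^{\bvec v}([w])=\mu([w])$. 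So the argument can be completed, but it needs that extra compactness pass rather than an appeal to ``genericity'' at the higher levels.
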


For any of the level alphabets $\cal A_n$ of a directive sequence $\bvec \sigma$ as above we consider the projection map\,\footnote{\, The map $pr_n$ was denoted in \cite{BHL1} and \cite{BHL2} by $\frak m_n$, but we decided to reserve this notation here for the more telling maps introduced below in section \ref{sec:4}.}
of the set of vector towers to the corresponding non-negative alphabet cone:
$$pr_n: \cal V(\bvec \sigma) \to \R_{\geq 0}^{\cal A_n}\, , \,\, \bvec v = (\vec v_n)_{n \geq 0} \mapsto \vec v_n$$
On the base level $n=0$ this projection splits over the 
evaluation map $\zeta_{\cal A_0}$ 
from (\ref{eq2.5e}) 
via the map $\frak m_{\tiny \bvec \sigma}$ from the last theorem. More precisely, 
this gives (see \cite{BHL1}, Proposition 10.2 (1) and (2)):

\begin{prop}
\label{2.4d}
For any subshift $X \subset \cal A^\Z$, generated by an everywhere growing directive sequence $\bvec \sigma$ as 
in (\ref{eq2.1d}) and (\ref{more-pr}), 
one has:
\begin{enumerate}
\item
The map $\zeta_X: \cal M(X) \to \R_{\geq 0}^{\cal A}\,, \,\, \mu \mapsto (\mu([a_k])_{a_k \in \cal A}$ satisfies:
$$pr_0 = \zeta_X \circ {\frak m}_{\tiny \bvec \sigma}$$
\item
In particular, for the letter frequency cone $\cal C(X) = \rm{im}(\zeta_X)$ 
(see (\ref{eq2.3.3})) 
this gives:
$$\cal C(X) = \zeta_X({\frak m_{\tiny \bvec \sigma}}(\cal V(\bvec\sigma)))$$ 
\item
Alternatively, the letter frequency cone is obtained as nested intersection as follows:
$$\cal C(X) := \bigcap_ {n \geq 1} \, M(\sigma_{[0,n)})(\R_{\geq 0}^{\cal A_{n}})$$
\item
In particular, 
$\dim \cal C(X)$ 
is a lower bound to the number $e(X)$ of distinct ergodic probability measures on $X$.
\qed
\end{enumerate}
\end{prop}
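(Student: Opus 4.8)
The plan is to prove the four assertions in turn: the first two are formal consequences of the definition of $\mu^{\tiny \bvec v}$ together with Theorem \ref{thm1}, the third needs a compactness argument, and the fourth follows from the ergodic decomposition. For item (1), I would first record that incidence matrices are multiplicative under composition, so that the compatibility condition (\ref{eq2.3.4}) telescopes to $M(\sigma_{[0,n)}) \cdot \vec v_n = \vec v_0$ for every vector tower $\bvec v = (\vec v_n)_{n \geq 0}$ and every $n \geq 0$. Evaluating the defining formula (\ref{eq2.2}) for $\mu^{\tiny \bvec v}$ at a single letter $w = a_k \in \cal A = \cal A_0$, and observing that $|\sigma_{[0,n)}(a_j)|_{a_k}$ is precisely the $(a_k, a_j)$-entry of $M(\sigma_{[0,n)})$, the sum appearing in (\ref{eq2.2}) is nothing but the $a_k$-coordinate of $M(\sigma_{[0,n)}) \cdot \vec v_n = \vec v_0$, hence is constant in $n$. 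Its limit is therefore $\vec v_0(a_k)$, giving $\mu^{\tiny \bvec v}([a_k]) = \vec v_0(a_k)$ for every $a_k$, which is exactly the asserted equality $\zeta_X \circ \frak m_{\tiny \bvec \sigma} = pr_0$.

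Item (2) then follows at once: by Theorem \ref{thm1} the map $\frak m_{\tiny \bvec \sigma}$ is surjective, so $\zeta_X(\frak m_{\tiny \bvec \sigma}(\cal V(\bvec \sigma))) = \zeta_X(\cal M(X)) = \cal C(X)$ by the definition (\ref{eq2.3.3}) of the letter frequency cone. Items (1) and (2) together identify $\cal C(X)$ with $pr_0(\cal V(\bvec \sigma))$, so item (3) reduces to the equality $pr_0(\cal V(\bvec \sigma)) = \bigcap_{n \geq 1} M(\sigma_{[0,n)})(\R_{\geq 0}^{\cal A_n})$. The inclusion $\subseteq$ is again the telescoping identity of item (1): every $\vec v_0 = M(\sigma_{[0,n)}) \vec v_n$ lies in each of the image cones.

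The reverse inclusion is the part I expect to be the main obstacle, being the only step that is not a purely formal identity. Given $\vec w$ in the intersection, for each $N \geq 1$ I would choose a preimage $\vec u_N \in \R_{\geq 0}^{\cal A_N}$ with $M(\sigma_{[0,N)}) \vec u_N = \vec w$ and set $\vec v_n^{(N)} := M(\sigma_{[n,N)}) \vec u_N$ for $0 \leq n \leq N$; by multiplicativity this finite segment satisfies $\vec v_n^{(N)} = M(\sigma_n) \vec v_{n+1}^{(N)}$ and $M(\sigma_{[0,n)}) \vec v_n^{(N)} = \vec w$ for all $n \leq N$. Since each column of $M(\sigma_{[0,n)})$ sums to the word length $|\sigma_{[0,n)}(a_j)| \geq 1$, the map $M(\sigma_{[0,n)})$ does not decrease the $\ell^1$-norm on the non-negative cone, so $\|\vec v_n^{(N)}\|_1 \leq \|M(\sigma_{[0,n)}) \vec v_n^{(N)}\|_1 = \|\vec w\|_1$; thus for each fixed $n$ the vectors $\vec v_n^{(N)}$ remain in a bounded subset of the finite-dimensional cone $\R_{\geq 0}^{\cal A_n}$. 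A diagonal extraction over $n$ then yields a subsequence along which $\vec v_n^{(N)} \to \vec v_n$ for all $n$ simultaneously; passing to the limit in $\vec v_n^{(N)} = M(\sigma_n) \vec v_{n+1}^{(N)}$ and using closedness of the cones shows that $\bvec v = (\vec v_n)_{n \geq 0}$ is a genuine vector tower with $pr_0(\bvec v) = \vec w$. The care needed here is in diagonalizing over the a priori distinct finite alphabets $\cal A_n$.

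Finally, for item (4) I would invoke the ergodic decomposition, as recorded in Section \ref{sec:2.1}: the cone $\cal M(X)$ is the set of non-negative combinations of its extreme rays, the $e(X)$ pairwise distinct ergodic probability measures, which are linearly independent, so $\dim \cal M(X) = e(X)$. Since $\zeta_X : \cal M(X) \to \cal C(X)$ is a surjective linear map of cones, the dimension of its image cannot exceed that of its source, whence $\dim \cal C(X) \leq e(X)$; that is, $\dim \cal C(X)$ is a lower bound for $e(X)$ (the inequality being trivially true when $e(X)$ is infinite, as $\cal C(X) \subseteq \R^{\cal A}$ is always finite-dimensional).
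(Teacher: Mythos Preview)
Your argument is correct. Note, however, that in the paper this proposition is not proved but quoted: the lead-in sentence attributes it to \cite{BHL1}, Proposition 10.2, and the \qed at the end of the statement signals that no proof is supplied here. So there is no ``paper's own proof'' to compare against; what you have written is a self-contained proof of a result the authors import from their earlier work.

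That said, your proof matches the structure one would expect from the cited source. Item (1) is exactly the observation that for a single letter the approximating sum in (\ref{eq2.2}) collapses via the telescoped compatibility condition to the constant value $\vec v_0(a_k)$; item (2) is formal from Theorem~\ref{thm1}; and your diagonal compactness argument for item (3), with the $\ell^1$-bound coming from the column sums $\sum_i M(\sigma_{[0,n)})_{ij} = |\sigma_{[0,n)}(a_j)| \geq 1$, is the standard way to extract a vector tower from a nested intersection of non-negative cone images. Your remark about ``care'' with the distinct alphabets is harmless but unnecessary: for each fixed level $n$ the vectors $\vec v_n^{(N)}$ (for $N \geq n$) all live in the single finite-dimensional space $\R_{\geq 0}^{\cal A_n}$, so the diagonalization is entirely routine. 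Item (4) is exactly as you say, using the identification $\dim \cal M(X) = e(X)$ already recorded in subsection~\ref{sec:2.1}.
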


The following statement is the translation of Remark 9.2 (3) of \cite{BHL1} into the terminology used here.

\begin{lem}
\label{2.8e}
For any vector tower $\bvec v = (\vec v_n)_{n \geq 0}$ over an everywhere growing directive sequence $\bvec \sigma$ as in (\ref{eq2.1d})  one has
$$\lim_{n \to \infty} \sum_{a_j \in \cal A_n} \vec v_n(a_j)  = 0\, ,$$
where the coefficient $\vec v_n(a_j) \in \R_{\geq 0}$ is defined in equality (\ref{eq2.17.5}).
\qed
\end{lem}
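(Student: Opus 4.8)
The plan is to exploit the everywhere growing hypothesis together with the elementary fact that the column sums of an incidence matrix record word lengths. First I would telescope the compatibility condition (\ref{eq2.3.4}): iterating $\vec v_k = M(\sigma_k)\cdot \vec v_{k+1}$ and using that the product of incidence matrices is the incidence matrix of the composite (so that $M(\sigma_0)\cdots M(\sigma_{n-1}) = M(\sigma_{[0,n)})$) yields
$$\vec v_0 = M(\sigma_{[0,n)})\cdot \vec v_n \qquad \text{for every } n \geq 1.$$

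Next I would pass to total masses. Write $S_n := \sum_{a_j \in \cal A_n} \vec v_n(a_j)$ for the sum of the coordinates of $\vec v_n$, and note that $S_0$ is a fixed, \emph{finite}, non-negative real number. The key observation is that the column of $M(\sigma_{[0,n)})$ indexed by a letter $a_j \in \cal A_n$ sums to $\sum_{b_i \in \cal A_0} |\sigma_{[0,n)}(a_j)|_{b_i} = |\sigma_{[0,n)}(a_j)|$, by the length formula $|w| = \sum_{a_k \in \cal A} |w|_{a_k}$. Taking coordinate sums in $\vec v_0 = M(\sigma_{[0,n)})\cdot \vec v_n$ and exchanging the order of summation therefore gives
$$S_0 = \sum_{a_j \in \cal A_n} |\sigma_{[0,n)}(a_j)|\; \vec v_n(a_j).$$

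Finally I would bound from below: since every coordinate satisfies $\vec v_n(a_j) \geq 0$ and $|\sigma_{[0,n)}(a_j)| \geq \beta_-(n)$ for all $a_j \in \cal A_n$ by the definition (\ref{eq2.9e}) of $\beta_-(n)$, the displayed equality yields $S_0 \geq \beta_-(n)\, S_n$, whence $S_n \leq S_0/\beta_-(n)$. Because $\bvec \sigma$ is everywhere growing we have $\beta_-(n) \to \infty$, and since $S_0$ is finite this forces $S_n \to 0$, which is exactly the assertion. There is essentially no serious obstacle here: the only points requiring care are that non-negativity of the $\vec v_n(a_j)$ legitimizes the one-sided estimate $|\sigma_{[0,n)}(a_j)|\,\vec v_n(a_j) \geq \beta_-(n)\,\vec v_n(a_j)$, and that $S_0 < \infty$ so that the quotient $S_0/\beta_-(n)$ genuinely tends to zero; both are immediate from the set-up.
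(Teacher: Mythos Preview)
Your argument is correct. The telescoping of the compatibility relation, the identity $S_0=\sum_{a_j\in\cal A_n}|\sigma_{[0,n)}(a_j)|\,\vec v_n(a_j)$ obtained by summing coordinates, and the estimate $S_n\le S_0/\beta_-(n)$ are all valid and yield the claim directly from the everywhere growing hypothesis.

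As for comparison: the paper does not actually prove this lemma in-line. It simply records the statement as a translation of Remark~9.2~(3) of \cite{BHL1} and appends a \qed. Your proposal thus supplies a self-contained proof where the paper relies on a citation. The argument you give is presumably the same one that appears in \cite{BHL1}, since it is the only natural route: the everywhere growing condition is precisely the statement that the column sums of $M(\sigma_{[0,n)})$ go to infinity, and the compatibility relation pins $S_0$ as a fixed weighted sum with those column sums as weights. There is nothing to add.
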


\section{The measure transfer and its injectivity for recognizable morphisms}

In this section we will first recall the definition of the measure transfer map and quote some basic properties derived in \cite{PartI} (see subsection \ref{sec:2.2} 
below), then recall the definition and some related properties of recognizable morphisms (see subsection \ref{sec:3m.2}
below), and in subsection \ref{sec:3m.3} we will derive the injectivity result from the title of this section.

\subsection{The measure transfer and some results from \cite{PartI}}
\label{sec:2.2}

\smallskip

For any non-erasing monoid morphism $\sigma: \cal A^* \to \cal B^*$ we define the {\em subdivision alphabet} 
$\cal A_\sigma = \{a_i(k) \mid a_i \in \cal A \text{ and } 1 \leq k \leq |\sigma(a_i)|\}$. The morphism $\sigma$ now defines a {\em subdivision morphism} $\pi_\sigma: \cal A^* \to \cal A_\sigma^*$ and a {\em letter-to-letter morphism} $\alpha_\sigma: \cal A_\sigma^* \to \cal B^*$, given for any $a_i \in \cal A$ and any $a_i(k) \in \cal A_\sigma$ by: 
$$\pi_\sigma(a_i) = a_i(1) \, a_i(2) \ldots a_i(|\sigma(a_i)|) \qquad \text{and} \qquad \alpha_\sigma(a_i(k)) = [\sigma(a_i)]_k$$
Here by $[\sigma(a_i)]_k$ we mean the $k$-th letter of the word $\sigma(a_i) \in \cal B^*$. 
We obtain 
directly:

\begin{fact}
\label{2.21} For any non-erasing monoid morphism $\sigma: \cal A^* \to \cal B^*$ one has:
$$\sigma \,\, = \,\, \alpha_\sigma \circ \pi_\sigma$$
\end{fact}

For any word $w \in \cal A_\sigma^*$ we denote by $\hat w \in \cal A^*$ the shortest word such that $\pi_\sigma(\hat w)$ contains $w$ as factor. If such $\hat w$ exists, it is 
unique; otherwise we treat $\hat w$ as formal symbol and we set 
\begin{equation}
\label{eq2.X}
\mu(\hat w) = 0
\end{equation}
for any $\mu \in \cal M(\cal A^\Z)$.

For any measure $\mu \in \cal M(\cal A^\Z)$ a measure $\mu^{\pi_\sigma} \in \cal M(\cal A_\sigma^\Z)$ is defined in section 3.1 of \cite{PartI} by 
setting $\mu^{\pi_\sigma}([w]) := \mu([\hat w])$, where $[\hat w]$ is the cylinder associated to the word $\hat w$ (see subsection \ref{sec:2.1}).
On the other hand, for any measure $\mu' \in \cal M(\cal A_\sigma^\Z)$ 
the classical push-forward measure $(\alpha_\sigma)_*(\mu')$ is an invariant measure on $\cal B^\Z$, since $\alpha_\sigma$ is letter-to-letter. 
We thus obtain (see 
\cite{PartI}, section 3):

\begin{thm}
\label{2.2.2}
Let $\sigma: \cal A^* \to \cal B^*$ be a non-erasing morphism of free monoids.

\smallskip
\noindent
(1)
For any invariant measure $\mu$ on $\cal A^\Z$ an invariant measure $\mu^\sigma$ on $\cal B^\Z$ is given by
$$\mu^\sigma = (\alpha_\sigma)_*(\mu^{\pi_\sigma})\, .$$

\smallskip
\noindent
(2)
For any word $w' \in \cal B^*$ the ``transferred measure'' $\mu^\sigma$ takes on the cylinder $[w']$ the value
$$\mu^\sigma([w']) = \sum_{w_i \in \alpha_\sigma^{-1}(w')} \mu([\hat w_i])\,.$$

\smallskip
\noindent
(3)
The issuing measure transfer map
$$\sigma^\cal M: \cal M(\cal A^\Z) \to \cal M(\cal B^\Z)\, , \,\, \mu \mapsto \mu^\sigma$$
induced by the morphism $\sigma$ has the following properties:
\begin{enumerate}
\item[(3a)]
The map $\sigma^\cal M$ is linear (over $\R$) and continuous (with respect to the weak$^*$-topology).
\item[(3b)]
The map $\sigma^\cal M$ is functorial.
\item[(3c)]
If $X$ is the support of $\mu$, then $\sigma(X)$ is the support of $\mu^\sigma$. Hence $\sigma^\cal M$ induces in particular 
on any subshift $X \subset \cal A^\Z$ 
a restriction/co-restriction map
$$
\sigma^\cal M_X: \cal M(X) \to \cal M(\sigma(X)) \, .
$$
\qed
\end{enumerate}
\end{thm}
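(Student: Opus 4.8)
The plan is to prove the three parts by exploiting the factorization $\sigma = \alpha_\sigma \circ \pi_\sigma$ of Fact \ref{2.21} and treating the two elementary morphisms separately: the subdivision morphism $\pi_\sigma$ carries all the combinatorial content, whereas the letter-to-letter morphism $\alpha_\sigma$ is handled by the classical push-forward. For part (1) I would first verify that the prescription $\mu^{\pi_\sigma}([w]) := \mu([\hat w])$ genuinely defines an invariant measure on $\cal A_\sigma^\Z$, i.e.\ that the function $w \mapsto \mu([\hat w])$ on $\cal A_\sigma^*$ is a weight function in the sense of the Kirchhoff equalities (\ref{Kirchhoff}). Granting this, $(\alpha_\sigma)_*(\mu^{\pi_\sigma})$ is an invariant measure because the push-forward of an invariant measure under $\alpha_\sigma^\Z$ is again invariant, and $\alpha_\sigma^\Z$ commutes with the shift precisely because $\alpha_\sigma$ is letter-to-letter; hence $\mu^\sigma$ is well defined and invariant.

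The Kirchhoff verification for $\mu^{\pi_\sigma}$ is the technical heart. Fix $w \in \cal A_\sigma^*$ with $\mu([\hat w]) \neq 0$ (if $\hat w$ fails to exist, both sides vanish by the convention (\ref{eq2.X})), and write its last letter as $a_i(k)$. If $k < |\sigma(a_i)|$ (interior of a subdivision block), the only letter $c \in \cal A_\sigma$ for which $\widehat{wc}$ exists is $c = a_i(k+1)$, and then $\widehat{wc} = \hat w$, so $\sum_c \mu^{\pi_\sigma}([wc]) = \mu([\hat w])$. If $k = |\sigma(a_i)|$ (right end of a block), then $\widehat{w\,a_j(1)} = \hat w\, a_j$ for each admissible $a_j \in \cal A$, and the right-hand equality in (\ref{Kirchhoff}) for $\mu$ at the word $\hat w$ gives $\sum_{a_j} \mu([\hat w\, a_j]) = \mu([\hat w])$; the left-hand equalities follow by the symmetric argument on prepended letters. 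This reduces the Kirchhoff equalities for $\mu^{\pi_\sigma}$ to those for $\mu$. For part (2) I then use that, $\alpha_\sigma$ being letter-to-letter, $(\alpha_\sigma^\Z)^{-1}([w'])$ is the disjoint union of the cylinders $[w_i]$ over the finitely many $w_i \in \cal A_\sigma^*$ of length $|w'|$ with $\alpha_\sigma(w_i) = w'$, so that
$$\mu^\sigma([w']) = \mu^{\pi_\sigma}\big((\alpha_\sigma^\Z)^{-1}([w'])\big) = \sum_{w_i \in \alpha_\sigma^{-1}(w')} \mu^{\pi_\sigma}([w_i]) = \sum_{w_i \in \alpha_\sigma^{-1}(w')} \mu([\hat w_i]).$$

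For part (3), linearity (3a) in $\mu$ is immediate from the formula in (2), and weak$^*$ continuity follows because each value $\mu^\sigma([w'])$ is a fixed non-negative integer combination of the numbers $\mu([\hat w_i])$, hence a coordinate-linear, continuous function of $\mu$ in the product topology on $\R_{\geq 0}^{\cal A^*}$. Functoriality (3b), $(\tau\circ\sigma)^\cal M = \tau^\cal M \circ \sigma^\cal M$, I would check on cylinders: both sides assign to a cylinder a sum of values $\mu([\hat{\cdot}])$ indexed by preimage words, and the compatibility of the two subdivision constructions under composition matches these index sets term by term. For the support identity (3c) I would argue via the language description of the image subshift in Remark \ref{image-shift}(3): by the formula in (2), $\mu^\sigma([w']) > 0$ iff $\mu([\hat w_i]) > 0$ for some $w_i \in \alpha_\sigma^{-1}(w')$, i.e.\ iff some $\hat w_i \in \cal L(\supp\mu)$, in which case $w'$ is a factor of $\sigma(\hat w_i) = \alpha_\sigma(\pi_\sigma(\hat w_i))$, so that $w' \in \cal L(\sigma(\supp\mu))$ by Lemma \ref{2.2e}; the converse inclusion is obtained by reversing this chain. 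The co-restriction $\sigma^\cal M_X: \cal M(X) \to \cal M(\sigma(X))$ is then purely formal.

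The two places demanding genuine care are the Kirchhoff bookkeeping for $\mu^{\pi_\sigma}$, where one must be scrupulous about the legality of one-letter extensions inside $\cal A_\sigma$ and about the convention (\ref{eq2.X}), and the support identity (3c). The latter I expect to be the main obstacle, the subtlety being that the set-image $\sigma^\Z(\supp\mu)$ need not itself be shift-invariant (as noted before Remark \ref{image-shift}), which forces the argument to be carried out at the level of languages rather than with the map $\sigma^\Z$ directly.
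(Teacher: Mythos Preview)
The paper does not actually prove Theorem \ref{2.2.2}: it is quoted from the companion paper \cite{PartI} (note the ``see \cite{PartI}, section 3'' preceding the statement and the trailing \qed), so there is no in-paper proof to compare against. Your outline is nonetheless correct and is essentially the natural argument one expects \cite{PartI} to carry out: the Kirchhoff verification for $\mu^{\pi_\sigma}$ via the two-case analysis (interior vs.\ end of a subdivision block) is exactly right, and the push-forward step for $\alpha_\sigma$ is standard.

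Two small points of care. First, your phrase ``Fix $w$ with $\mu([\hat w])\neq 0$'' is misleading: the Kirchhoff equalities must be verified for \emph{all} $w\in\cal A_\sigma^*$, but your case analysis already does this (the extensions $c$ with non-existent $\widehat{wc}$ contribute $0$ by convention~(\ref{eq2.X}), and in the boundary case the identity reduces to Kirchhoff for $\mu$ at $\hat w$ regardless of whether $\mu([\hat w])=0$). Second, for (3c) the ``reversing the chain'' step deserves one explicit sentence: if $w'\in\cal L(\sigma(\supp\mu))$ then $w'$ is a factor of some $\sigma(u)$ with $u\in\cal L(\supp\mu)$; replacing $u$ by its shortest factor whose $\sigma$-image still covers this occurrence of $w'$ produces a word $\hat w_i$ with $w_i\in\alpha_\sigma^{-1}(w')$, and $\mu([\hat w_i])>0$ follows from the standard equivalence $v\in\cal L(\supp\mu)\Leftrightarrow \mu([v])>0$ for shift-invariant $\mu$. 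Your sketch of (3b) is the one place that remains genuinely hand-wavy; it is routine but book-keeping heavy, and an alternative route is to verify functoriality on characteristic measures $\mu_w$ (where it reduces to $\sigma^\cal M(\mu_w)=\mu_{\sigma(w)}$, cf.\ Proposition~\ref{2.2.3}(d)) and then pass to arbitrary $\mu$ by density and the continuity from (3a).
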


We also list the following more technical properties derived in \cite{PartI}:

\begin{prop}
\label{2.2.3}
Let $\sigma: \cal A^* \to \cal B^*$ be a non-erasing free monoid morphism, and let $\sigma^\cal M$ be the induced transfer map on the measure cones. Let $\mu \in \cal M(\cal A^\Z)$ be an invariant measure on the full shift $\cal A^\Z$, and denote as before by $\mu^\sigma = \sigma^\cal M(\mu)$ the transferred measure on $\cal B^\Z$.
Then one has:
\begin{enumerate}
\item[(a)]
The total mass of the transferred measure $\mu^\sigma$ is given by the formula
$$\mu^\sigma(\cal B^\Z) =  
\sum_{a_k \in \cal A} \sum_{b_j \in \cal B} |\sigma(a_k)|_{b_j}\cdot \mu(a_k)\, .$$
In particular, if $\mu$ is a probability measure, then in general $\mu^\sigma$ will not be probability.
\item[(b)]
For any generator $b_j \in \cal B$ we have:
$$\mu^\sigma([b_j]) =  \sum_{a_k \in \cal A} |\sigma(a_k)|_{b_j} \cdot \mu(a_k)$$
In particular, for the letter frequency vectors 
from (\ref{eq2.3.1}) we obtain:
\begin{equation}
\label{eq2.3.2}
\vec v(\mu^\sigma) \,\, = \,\, M(\sigma) \cdot \vec v(\mu)
\end{equation}
In other words 
(see Proposition 4.5 of \cite{PartI}), 
the measure transfer map $\sigma^\cal M$
commutes via the evaluation maps $\zeta_\cal A$ and $\zeta_\cal B$ 
from
(\ref{eq2.5e})
with the linear map induced by $\sigma$ on the non-negative 
cone $\R_{\geq 0}^\cal A \,$:
$$\zeta_\cal B \circ \sigma^\cal M \,\, = \,\, M(\sigma) \circ \zeta_\cal A$$
\item[(c)]
For any $w \in \cal A^*$ the cylinder measures satisfy:
$$\mu^\sigma([\sigma(w)]) \,\, \geq \,\, \mu([w])$$
\item[(d)]
For any word $w \in \cal A^*$ the characteristic measure $\mu_w$ satisfies:
$$
\sigma^\cal M(\mu_w) = \mu_{\sigma(w)}$$
\qed
\end{enumerate}
\end{prop}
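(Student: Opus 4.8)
The plan is to derive all four statements from the explicit cylinder formula of Theorem \ref{2.2.2}(2), which writes $\mu^\sigma([w'])$ as a finite non-negative sum $\sum_{w_i \in \alpha_\sigma^{-1}(w')} \mu([\hat w_i])$ over the $\alpha_\sigma$-preimages $w_i \in \cal A_\sigma^*$ of $w'$ (all of length $|w'|$, since $\alpha_\sigma$ is letter-to-letter). With this tool in hand, (b) and (a) become bookkeeping, (c) is a positivity-of-one-term argument, and (d) reduces to a total-mass computation on a single periodic orbit.

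First I would prove (b). Take $w' = b_j \in \cal B$ a single letter. Its $\alpha_\sigma$-preimages are exactly the letters $a_k(l) \in \cal A_\sigma$ with $[\sigma(a_k)]_l = b_j$; for a fixed $a_k$ the number of admissible indices $l$ is precisely $|\sigma(a_k)|_{b_j}$. For each such letter the shortest word whose $\pi_\sigma$-image contains it is $\hat{a_k(l)} = a_k$, since $a_k(l)$ occurs in $\pi_\sigma(a_k) = a_k(1)\cdots a_k(|\sigma(a_k)|)$. Hence each preimage contributes $\mu([a_k]) = \mu(a_k)$, and summing gives $\mu^\sigma([b_j]) = \sum_{a_k \in \cal A} |\sigma(a_k)|_{b_j}\,\mu(a_k)$. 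By the definition (\ref{incidence-m}) of $M(\sigma)$ and (\ref{eq2.3.1}) of $\vec v$, the $b_j$-coordinate of $M(\sigma)\cdot\vec v(\mu)$ is $\sum_{a_k} |\sigma(a_k)|_{b_j}\,\mu([a_k])$, which is exactly $\mu^\sigma([b_j])$; this establishes (\ref{eq2.3.2}), and rewriting it through the maps $\zeta_\cal A,\zeta_\cal B$ of (\ref{eq2.5e}) gives $\zeta_\cal B\circ\sigma^\cal M = M(\sigma)\circ\zeta_\cal A$. Statement (a) then follows by summing (b) over all $b_j\in\cal B$, using the disjoint decomposition $\cal B^\Z = \bigsqcup_{b_j\in\cal B}[b_j]$ together with $\sum_{b_j}|\sigma(a_k)|_{b_j} = |\sigma(a_k)|$.

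For (c) I would apply the cylinder formula to $w' = \sigma(w) = \alpha_\sigma(\pi_\sigma(w))$ (using Fact \ref{2.21}). Among the $\alpha_\sigma$-preimages of $\sigma(w)$ is the word $\pi_\sigma(w)$ itself, and a short block-alignment argument shows $\hat{\pi_\sigma(w)} = w$: in any image $\pi_\sigma(u)$ the letters $a(1)$ occur only at block beginnings and the letters $a(|\sigma(a)|)$ only at block ends, so any occurrence of $\pi_\sigma(w)$ inside $\pi_\sigma(u)$ must fill complete consecutive blocks, forcing $w$ to be a factor of $u$. Thus this single term contributes $\mu([w])$, and discarding the remaining non-negative terms yields $\mu^\sigma([\sigma(w)]) \geq \mu([w])$.

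Finally, for (d) I would argue by support and total mass. By Theorem \ref{2.2.2}(3c) together with (\ref{last-minute}), the support of $\sigma^\cal M(\mu_w)$ is $\sigma(\supp(\mu_w)) = \sigma(\cal O(w^{\pm\infty})) = \cal O(\sigma(w)^{\pm\infty})$, a single finite shift-orbit, which is also the support of $\mu_{\sigma(w)}$. Any $T$-invariant measure carried by a single periodic orbit assigns equal mass to each of its points (as $T$ permutes them cyclically), hence is determined by its total mass. From (\ref{eq:charac-m}) one reads $\mu_w([a_k]) = |w|_{a_k}$, so part (a) gives $\sigma^\cal M(\mu_w)(\cal B^\Z) = \sum_{a_k}\sum_{b_j}|\sigma(a_k)|_{b_j}|w|_{a_k} = \sum_{a_k}|w|_{a_k}\,|\sigma(a_k)| = |\sigma(w)|$, which equals the total mass of $\mu_{\sigma(w)}$, whether or not $\sigma(w)$ is a proper power (by the definition following (\ref{eq:charac-m})). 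Hence the two measures coincide. I expect (d) to be the main obstacle: one must correctly identify the image support as a single orbit and handle the proper-power normalization of characteristic measures, which is precisely where the uniform-mass-on-a-periodic-orbit observation carries the argument; the other three parts are immediate consequences of the cylinder formula.
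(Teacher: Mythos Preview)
Your proof is correct. Note that the paper does not actually prove this proposition: it is quoted from the companion paper \cite{PartI} and stated here with a terminal \qed\ and no argument, so there is no in-paper proof to compare against. Your derivation of all four parts directly from the cylinder formula of Theorem \ref{2.2.2}(2) is exactly the intended route; the computation in (b), the summation for (a), the single-term lower bound in (c) via $\hat{\pi_\sigma(w)} = w$, and the support-plus-total-mass argument in (d) are all sound, including the handling of the proper-power normalization of $\mu_w$ and $\mu_{\sigma(w)}$.
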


It remains to 
quote a useful 
evaluation technique for the transferred measure, derived in section 4 of \cite{PartI} from what is stated above as part (2) of Theorem \ref{2.2.2}. For this purpose we define for any non-erasing morphism $\sigma: \cal A^* \to \cal B^*$ and any $w \in \cal A^*, \, u \in \cal B^*$ 
the number $\lfloor\sigma(w) \rfloor_{u}$  of {\em essential occurrences} of $u$ 
in $\sigma(w)$, by which we mean that the first letter of $u$ occurs in the $\sigma$-image of first letter of $w$, and the last letter of $u$ occurs in the $\sigma$-image of last letter of $w$.
By $\langle \sigma \rangle$ we denote the smallest length of any of the letter images $\sigma(a_i)$.

\begin{prop}(\cite{PartI}, {\rm Proposition 4.2})
\label{2.8.formula}
Let $\sigma: \cal A^* \to \cal B^*$ be any non-erasing monoid morphism, and let $\mu \in \cal M(\cal A^\Z)$. Then for any $w' \in \cal B^*$ with $|w'| \geq 2$ the transferred measure $\mu^\sigma = \sigma^\cal M(\mu)$ takes on the cylinder $[w']$ the value
$$
\mu^\sigma([w']) = \sum_{{\big \{}w_j \in \cal A^* \,{\big |}\, |w_j| \leq \frac{|w'|-2}{\langle\sigma\rangle}+2{\big \}}}{\lfloor\sigma(w_j) \rfloor}_{w'} \cdot \mu([w_j])\, .
$$
\qed
\end{prop}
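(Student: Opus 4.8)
The plan is to start from the explicit evaluation of the transferred measure furnished by Theorem \ref{2.2.2}(2),
$$\mu^\sigma([w']) = \sum_{w_i \in \alpha_\sigma^{-1}(w')} \mu([\hat w_i])\, ,$$
and to reorganize this sum by grouping the index words $w_i$ according to their lift $\hat w_i \in \cal A^*$. Since $\alpha_\sigma$ is letter-to-letter, every $w_i \in \alpha_\sigma^{-1}(w')$ is a word of length $|w'|$ over the subdivision alphabet $\cal A_\sigma$. Those $w_i$ which do not occur as a factor of any $\pi_\sigma(w)$ have, by the convention (\ref{eq2.X}), vanishing contribution $\mu(\hat w_i) = 0$ and hence may be discarded; for each of the remaining $w_i$ the lift $\hat w_i$ is a genuine, well-defined word $w \in \cal A^*$. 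Writing $N(w)$ for the number of $w_i \in \alpha_\sigma^{-1}(w')$ with $\hat w_i = w$, the identity above becomes $\mu^\sigma([w']) = \sum_{w \in \cal A^*} N(w)\, \mu([w])$.

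The heart of the argument---and the step I expect to require the most care---is the combinatorial identity $N(w) = \lfloor \sigma(w) \rfloor_{w'}$, which I would establish by exhibiting a bijection between the set counted by $N(w)$ and the set of essential occurrences of $w'$ in $\sigma(w)$. Write $w = x_1 \cdots x_n$. By Fact \ref{2.21} one has $\sigma = \alpha_\sigma \circ \pi_\sigma$, so a factor $w_i$ of $\pi_\sigma(w)$ with $\alpha_\sigma(w_i) = w'$ reads off an occurrence of $w'$ in $\sigma(w)$; the requirement $\hat w_i = w$ (that $w$ be the \emph{shortest} word whose $\pi_\sigma$-image contains $w_i$) says precisely that this occurrence meets the block $\sigma(x_1)$ and the block $\sigma(x_n)$, and therefore exhausts every intermediate block---that is, that the occurrence is essential. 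The map is injective because each subdivision letter $a_i(k)$ records not only the letter $a_i$ but also the position $k$ inside its block, so the first letter of $w_i$ already pins down the starting position of the corresponding occurrence in $\sigma(w)$; distinct occurrences thus yield distinct words $w_i$, and periodicity of $\sigma(w)$ cannot collapse them. Surjectivity is immediate, since every essential occurrence reads off exactly one such $w_i$. This yields
$$\mu^\sigma([w']) = \sum_{w \in \cal A^*} \lfloor \sigma(w) \rfloor_{w'} \cdot \mu([w])\, .$$

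It remains to truncate this a priori infinite sum to the words satisfying $|w| \le \frac{|w'|-2}{\langle \sigma \rangle} + 2$, which I would do by showing that $\lfloor \sigma(w) \rfloor_{w'} = 0$ for all longer $w$. Indeed, an essential occurrence of $w'$ in $\sigma(w) = \sigma(x_1)\cdots\sigma(x_n)$ is a contiguous factor running from $\sigma(x_1)$ to $\sigma(x_n)$, so it contains at least the last letter of $\sigma(x_1)$, all of the intermediate blocks $\sigma(x_2), \dots, \sigma(x_{n-1})$, and at least the first letter of $\sigma(x_n)$; for $n \ge 2$ this forces $|w'| \ge 2 + (n-2)\langle \sigma \rangle$, equivalently $n \le \frac{|w'|-2}{\langle \sigma \rangle} + 2$ (recall $\langle \sigma \rangle \ge 1$ since $\sigma$ is non-erasing). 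The cases $n \le 1$ satisfy the bound automatically, because $|w'| \ge 2$ makes the right-hand side at least $2$. Hence every $w$ violating the length bound contributes a zero term, and the displayed sum collapses to the finite sum asserted in the Proposition.
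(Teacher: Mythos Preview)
Your argument is correct. Note, however, that the present paper does not itself prove this proposition: it is quoted verbatim from \cite{PartI}, Proposition~4.2, and closed with a \qed. There is therefore no proof in the paper to compare against. That said, your derivation---starting from the explicit formula of Theorem~\ref{2.2.2}(2), regrouping the sum over $\alpha_\sigma^{-1}(w')$ according to the lift $\hat w_i$, establishing the bijection $N(w)=\lfloor\sigma(w)\rfloor_{w'}$ via the position-encoding of the subdivision letters, and finally bounding $|w|$ from the essentiality constraint---is exactly the natural route and is presumably close to what \cite{PartI} does.
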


\subsection{Recognizable morphisms and some related properties}
\label{sec:3m.2}

${}^{}$
\smallskip

The following notion has become more and more central to symbolic dynamics 
(see for instance \cite{BSTY19}, \cite{DDMP}, \cite{DLR} or \cite{DP20}):

\begin{defn}
\label{6.1}
Let $\sigma:\cal A^* \to \cal B^*$ be a non-erasing morphism, and let $X \subset \cal A^\Z$ be a subshift over $\cal A$. 
Then $\sigma$ is said to be 
{\em recognizable in $X$}
if the following conclusion is true:

Consider biinfinite words ${\bf x}, {\bf x'} \in X \subset \cal A^\Z$ and ${\bf y} \in \cal B^\Z$ which satisfy:
\begin{enumerate}
\item[(*)]
${\bf y} = T^k(\sigma^\Z({\bf x}))$ and ${\bf y} = T^\ell(\sigma^\Z({\bf x'}))$ for some integers $k, \ell$ which satisfy $0 \leq k \leq |\sigma(x_1)|-1$  and $ 0 \leq  \ell \leq |\sigma(x'_1)|-1$, where $x_1$ and $x'_1$ are the first letters of the 
positive half-words 
${\bf x_{[1, \infty)}} = x_1  x_2 \ldots$ of ${\bf x}$ and ${\bf x'_{[1, \infty)}} = x'_1 x'_2 \ldots$ of ${\bf x'}$ respectively.
\end{enumerate}

Then one has ${\bf x} = \bf x'$ and $k = \ell$.
\end{defn}

As we will see in the next subsection, recognizability in a subshift is much related to the following:

\begin{defn}[\cite{PartI}, Section 5]
\label{recognizable-e}
For any non-erasing monoid morphism $\sigma: \cal A^* \to \cal B^*$ and any subshift $X \subset \cal A^\Z$ we define the following two properties:
\begin{enumerate}
\item
$\sigma$ is {\em shift-orbit injective in $X$}: Any $\bf x$ and $\bf y$ in $X$ have images $\sigma({\bf x})$ and $\sigma({\bf y})$ in the same shift-orbit if and only $\bf x$ and $\bf y$ lie in a common shift-orbit.
\item
$\sigma$ is {\em shift-period preserving in $X$}: For any periodic biinfinite word $w^{\pm \infty} = \ldots w w w \ldots \in X$ the word $w$ can be written as proper power if and only if $\sigma(w)$ can be written as proper power.
${}^{}$
\end{enumerate}
Here 
$w \in \cal A^* \smallsetminus \{\epsilon\}$ is a {\em proper power}\,\footnote{
Elements in $\cal A^*$ which are not a proper power are sometimes called ``primitive''. However, since $\cal A^*$ is canonically embedded into the free group $F(\cal A)$, where the notion of ``primitive elements'' is classical, 
but has a different meaning,
we believe it is better not to use this terminology for a different purpose.}
 if $w = u^m$ for some $u \in \cal A^*$ and some integer $m \geq 2$.
\end{defn}

The following useful property is a direct consequence of the previous definition (see Lemma 
5.2 of \cite{PartI}).

\begin{lem}
\label{2.3e}
Let $\sigma_1: \cal A^* \to \cal B^*$ and $\sigma_2: \cal B^* \to \cal C^*$ be two non-erasing morphisms, and consider a subshift $X \subset \cal A^\Z$ as well as its image subshift $Y = \sigma_1(X) \subset \cal B^\Z$. Then we have:
\begin{enumerate}
\item
The composed morphism $\sigma_2 \circ \sigma_1: \cal A^* \to \cal C^*$ is shift-orbit injective in $X$ if and only if $\sigma_1$ is shift-orbit injective in $X$ and $\sigma_2$ is shift-orbit injective in $Y$.
\item
The composed morphism $\sigma_2 \circ \sigma_1: \cal A^* \to \cal C^*$ is shift-period preserving in $X$ if and only if $\sigma_1$ is shift-period preserving in $X$ and $\sigma_2$ is shift-period preserving in $Y$.
\qed
\end{enumerate}
\end{lem}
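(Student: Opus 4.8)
The plan is to reduce each of the two biconditionals to its non-automatic half and then chain the hypotheses through the factorization $\sigma_2 \circ \sigma_1$, relying throughout on two elementary observations. First, for any non-erasing $\sigma$ the induced map $\sigma^\Z$ carries a shift-orbit into a single shift-orbit, so (writing ${\bf x} \sim {\bf y}$ for ``${\bf x}, {\bf y}$ lie in a common shift-orbit'') one always has ${\bf x} \sim {\bf y} \Rightarrow \sigma^\Z({\bf x}) \sim \sigma^\Z({\bf y})$; likewise $\sigma(u^m) = \sigma(u)^m$, so ``$w$ is a proper power $\Rightarrow \sigma(w)$ is a proper power'' holds for every morphism. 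Hence in both parts of Definition \ref{recognizable-e} only the reverse implication carries content. Second, $(\sigma_2 \circ \sigma_1)^\Z$ and $\sigma_2^\Z \circ \sigma_1^\Z$ coincide up to a shift, while $(\sigma_2\circ\sigma_1)(w) = \sigma_2(\sigma_1(w))$ holds exactly in $\cal C^*$; combined with Remark \ref{image-shift}(2) — every point of $Y = \sigma_1(X)$ is shift-orbit equivalent to $\sigma_1^\Z({\bf x})$ for some ${\bf x} \in X$ — and with the functoriality $\sigma_2(Y) = (\sigma_2\circ\sigma_1)(X)$ (Lemma \ref{2.2e}), this lets one translate freely between statements in $X$, in $Y$ and in $\sigma_2(Y)$.

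For Part~(1) the argument is symmetric and needs no further hypotheses. For ``$\Leftarrow$'', if $(\sigma_2\circ\sigma_1)^\Z({\bf x}) \sim (\sigma_2\circ\sigma_1)^\Z({\bf y})$ with ${\bf x},{\bf y} \in X$, I rewrite this as $\sigma_2^\Z(\sigma_1^\Z({\bf x})) \sim \sigma_2^\Z(\sigma_1^\Z({\bf y}))$; shift-orbit injectivity of $\sigma_2$ in $Y$ gives $\sigma_1^\Z({\bf x}) \sim \sigma_1^\Z({\bf y})$, and then that of $\sigma_1$ in $X$ gives ${\bf x} \sim {\bf y}$. For ``$\Rightarrow$'', injectivity of $\sigma_1$ follows by applying $\sigma_2^\Z$ (which preserves $\sim$) and invoking injectivity of the composite; injectivity of $\sigma_2$ in $Y$ follows by first using Remark \ref{image-shift}(2) to write two given points of $Y$ as $\sigma_1^\Z({\bf x})$ and $\sigma_1^\Z({\bf y})$ up to shift, then running the same chain in reverse. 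No periodicity enters, so Part~(1) is complete.

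For Part~(2) it is convenient to use the reformulation that $\sigma$ is shift-period preserving in $X$ if and only if $\sigma(w_p)$ is primitive (not a proper power) for the primitive period word $w_p$ of every periodic point $p \in X$; the non-primitive case is automatic by the forward implication above. The ``$\Leftarrow$'' direction and the first conjunct of ``$\Rightarrow$'' (that $\sigma_1$ is shift-period preserving in $X$) are then short chases in the spirit of Part~(1): for periodic $p \in X$ with primitive period $w_p$, shift-period preservation of $\sigma_1$ makes $\sigma_1(w_p)$ the primitive period of $\sigma_1^\Z(p) \in Y$, after which shift-period preservation of $\sigma_2$ in $Y$ and the identity $(\sigma_2\circ\sigma_1)(w_p) = \sigma_2(\sigma_1(w_p))$ close the loop; the converse conjunct is obtained by contraposition, since $\sigma_1(w_p) = t^m$ would force $(\sigma_2\circ\sigma_1)(w_p) = \sigma_2(t)^m$ to be a proper power.

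The main obstacle is the remaining conjunct of ``$\Rightarrow$'': deducing that $\sigma_2$ is shift-period preserving in $Y$. Given periodic $q \in Y$ with primitive period $u$, I want a periodic $p \in X$ with $\sigma_1^\Z(p) = q$; once such a lift with primitive period $w$ is at hand, $\sigma_1(w)$ is a power of $u$, so a proper-power $\sigma_2(u)$ would make $(\sigma_2\circ\sigma_1)(w)$ a proper power while $w$ is primitive, contradicting the hypothesis on the composite. The delicate point is exactly this lift, since a periodic point of $Y$ need not come from a periodic point of $X$: $\sigma_1$ could in principle send an aperiodic orbit into a periodic one (indeed a letter-collapsing $\sigma_1$ shows the bare implication can fail, which is why shift-orbit injectivity is in force in the recognizability setting where the lemma is applied). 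That collapse is ruled out as follows: if ${\bf x} \in X$ were aperiodic with $\sigma_1^\Z({\bf x})$ periodic, then by compactness the infinite orbit $\cal O({\bf x})$ is not closed, so $\overline{\cal O({\bf x})}$ contains some ${\bf z} \notin \cal O({\bf x})$; continuity of $\sigma_1^\Z$ forces $\sigma_1^\Z({\bf z})$ into the finite orbit $\cal O(\sigma_1^\Z({\bf x}))$, whence $\sigma_1^\Z({\bf z}) \sim \sigma_1^\Z({\bf x})$, and shift-orbit injectivity then yields ${\bf z} \sim {\bf x}$, a contradiction. Thus $q$ lifts and the chase applies. I expect this non-collapsing step, together with the bookkeeping needed to identify $u$ as the \emph{primitive} period of $q$, to be the only non-formal part of the proof.
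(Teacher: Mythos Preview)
The paper gives no proof here beyond the remark that the lemma is ``a direct consequence of the previous definition'' and a pointer to Lemma~5.2 of \cite{PartI}. Your treatment of Part~(1), and of the ``$\Leftarrow$'' direction together with the $\sigma_1$-conjunct of ``$\Rightarrow$'' in Part~(2), is a correct and straightforward unwinding of Definition~\ref{recognizable-e}, which is presumably all that is intended.

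For the remaining conjunct --- deducing that $\sigma_2$ is shift-period preserving in $Y$ --- you have put your finger on a genuine defect in the \emph{statement} rather than merely a subtlety in the proof. Your own parenthetical (``a letter-collapsing $\sigma_1$ shows the bare implication can fail'') is decisive: take $X\subset\{a,b\}^\Z$ aperiodic, $\sigma_1\colon a,b\mapsto c$, and $\sigma_2\colon c\mapsto dd$; then $X$ has no periodic points, so $\sigma_2\circ\sigma_1$ is vacuously shift-period preserving in $X$, while $Y=\sigma_1(X)=\{c^{\pm\infty}\}$ and $\sigma_2(c)=d^2$ is a proper power, so $\sigma_2$ is \emph{not} shift-period preserving in $Y$. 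Thus Part~(2) as written is false in the ``$\Rightarrow$'' direction. Your non-collapsing argument is correct, but it invokes shift-orbit injectivity of $\sigma_1$, which is not a hypothesis of Part~(2); what you have actually established is the \emph{coupled} equivalence --- ``$\sigma_2\circ\sigma_1$ is shift-orbit injective and shift-period preserving in $X$ iff $\sigma_1$ has both properties in $X$ and $\sigma_2$ has both in $Y$'' --- and that is precisely the form in which the lemma is used in the paper (Proposition~\ref{3m.1}, equivalence (3)$\Leftrightarrow$(4)). You should say this explicitly rather than import the extra hypothesis without comment.
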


\subsection{Injectivity of the measure transfer for recognizable morphisms}
\label{sec:3m.3}

${}^{}$
\smallskip

Let $\sigma: \cal A^* \to \cal B^*$ be a non-erasing morphism of free monoids, and let $\pi_\sigma: \cal A^* \to \cal A_\sigma^*$ and $\alpha_\sigma: \cal A_\sigma^* \to \cal B^*$ be the canonical subdivision morphism and the induced letter-to-letter morphism associated to $\sigma$ which satisfy 
$\sigma = \alpha_\sigma \circ \pi_\sigma$ 
(see Fact \ref{2.21}). 
For any subshift $X \subset \cal A^\Z$ we consider the image subshift $\pi_\sigma(X) \subset \cal A_\sigma^\Z$ and the induced restriction/co-restriction
$$\alpha_\sigma^X: \pi_\sigma(X) \to \sigma(X)$$
of the map $\alpha_\sigma^\Z: \cal A_\sigma^\Z \to \cal B^\Z$ to $\pi_\sigma(X)$ and $\sigma(X)$ respectively.

\begin{prop}
\label{3m.1}
For any non-erasing morphism $\sigma: \cal A^* \to \cal B^*$ and any subshift $X \subset \cal A^\Z$ the following statements are equivalent:
\begin{enumerate}
\item
$\sigma$ is recognizable in $X$.
\item
$\alpha_\sigma^X$ is an isomorphism of subshifts.
\item
$\alpha_\sigma$ is shift-orbit injective and shift-period preserving in $\pi_\sigma(X)$.
\item
$\sigma$ is shift-orbit injective and shift-period preserving in $X$.
\end{enumerate}
\end{prop}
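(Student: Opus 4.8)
The plan is to establish the four statements as a chain of equivalences $(4)\Leftrightarrow(3)\Leftrightarrow(2)\Leftrightarrow(1)$, organized around the factorization $\sigma = \alpha_\sigma\circ\pi_\sigma$ of Fact \ref{2.21}. The guiding principle is that the subdivision morphism $\pi_\sigma$ is \emph{transparent}: since every letter $a_i(k)\in\cal A_\sigma$ records both its source letter $a_i$ and its position $k$, the block boundaries of any point ${\bf z}\in\pi_\sigma(X)$ (namely the positions carrying an index $k=1$) are intrinsic to $\bf z$. Hence from $\bf z$ one recovers canonically the underlying word ${\bf x}\in X$ together with the offset of the origin inside its block. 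First I would record this as a preliminary lemma: $\pi_\sigma$ is shift-orbit injective and shift-period preserving in every subshift $X$, and the de-subdivision ${\bf z}\mapsto(\cal O({\bf x}),\text{offset})$ is well defined and continuous.

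With this in hand, $(3)\Leftrightarrow(4)$ is immediate from Lemma \ref{2.3e} applied to $\sigma=\alpha_\sigma\circ\pi_\sigma$ with intermediate subshift $Y=\pi_\sigma(X)$: because $\pi_\sigma$ is always shift-orbit injective and shift-period preserving in $X$, part (1) of that lemma gives that $\sigma$ is shift-orbit injective in $X$ iff $\alpha_\sigma$ is shift-orbit injective in $\pi_\sigma(X)$, and part (2) gives the analogous statement for shift-period preservation. Conjoining the two yields $(3)\Leftrightarrow(4)$.

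For $(2)\Leftrightarrow(3)$ I would prove the general fact that, for the \emph{letter-to-letter} morphism $\alpha_\sigma$ and the subshift $Z=\pi_\sigma(X)$, the factor map $\alpha_\sigma^X\colon Z\to\sigma(X)$ is an isomorphism of subshifts iff $\alpha_\sigma$ is shift-orbit injective and shift-period preserving in $Z$. Since $\alpha_\sigma^\Z$ commutes with $T$ and $\sigma(X)=\alpha_\sigma(Z)$, the map $\alpha_\sigma^X$ is automatically a continuous, shift-commuting surjection between compact spaces, so it is an isomorphism exactly when it is injective. Injectivity readily implies both properties (using $\alpha_\sigma^\Z T=T\alpha_\sigma^\Z$ and $|\alpha_\sigma(v)|=|v|$). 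For the converse, suppose $\alpha_\sigma^\Z({\bf z})=\alpha_\sigma^\Z({\bf z}')$; shift-orbit injectivity forces ${\bf z}'=T^m{\bf z}$, and if $m\neq0$ then $\alpha_\sigma^\Z({\bf z})$ is $T^m$-periodic. The key step is to deduce that ${\bf z}$ itself is then periodic: the preimage under $\alpha_\sigma^X$ of the finite orbit of $\alpha_\sigma^\Z({\bf z})$ is closed and, by shift-orbit injectivity, consists of a single orbit --- which being closed must be finite, hence ${\bf z}=v^{\pm\infty}$. Now shift-period preservation ensures that $\alpha_\sigma(v)$ is again not a proper power, so the minimal period of $\alpha_\sigma^\Z({\bf z})$ equals $|v|$, whence $|v|\mid m$ and ${\bf z}'=T^m{\bf z}={\bf z}$.

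Finally, $(1)\Leftrightarrow(2)$ comes from unwinding the definition of recognizability through the transparency of $\pi_\sigma$. Writing an arbitrary point of $\pi_\sigma(X)$ as $T^k\pi_\sigma^\Z({\bf x})$ and using $\alpha_\sigma^\Z(T^k\pi_\sigma^\Z({\bf x}))=T^k\sigma^\Z({\bf x})$, a coincidence of $\alpha_\sigma$-images corresponds precisely to a pair $({\bf x},k),({\bf x}',\ell)$ with $T^k\sigma^\Z({\bf x})=T^\ell\sigma^\Z({\bf x}')$. Injectivity of $\alpha_\sigma^X$ then matches the conclusion ${\bf x}={\bf x}'$, $k=\ell$ of Definition \ref{6.1} via the canonical de-subdivision. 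The main obstacle I anticipate is the bookkeeping in this last step: one must align the offset $k$ with the position inside a subdivision block and check that the range constraint $0\le k\le|\sigma(x_1)|-1$ in Definition \ref{6.1} corresponds exactly to choosing the block containing the origin, so that the two notions of unique desubstitution coincide without an off-by-one discrepancy. The other genuinely nontrivial point is the topological argument in the previous paragraph, that shift-orbit injectivity forces preimages of periodic points to be periodic.
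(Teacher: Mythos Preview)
Your proposal is correct and follows essentially the same route as the paper: both argue the chain $(4)\Leftrightarrow(3)\Leftrightarrow(2)\Leftrightarrow(1)$ through the factorization $\sigma=\alpha_\sigma\circ\pi_\sigma$ and the transparency of $\pi_\sigma$, invoking Lemma~\ref{2.3e} for $(3)\Leftrightarrow(4)$ and reducing $(1)\Leftrightarrow(2)$ to injectivity of the letter-to-letter map $\alpha_\sigma^X$. The only differences are cosmetic: the paper outsources the $(1)\Leftrightarrow(2)$ reduction to Lemma~3.5 of \cite{BSTY19} rather than unwinding Definition~\ref{6.1} directly, and it asserts the $(3)\Rightarrow(2)$ direction in one sentence where you spell out the topological argument that a closed single orbit (the preimage of a finite orbit under a shift-orbit injective map) must be finite.
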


\begin{proof}
We first note that by definition $\alpha_\sigma^X$ is continuous and surjective, so that claim (2) is equivalent to stating that $\alpha_\sigma^X$ is injective. 

Next we observe that claim (1) is equivalent to stating that $\alpha_\sigma^X$ is recognizable in $\pi_\sigma(X)$. This is a direct consequence of the product decomposition $\sigma = \alpha_\sigma \circ \pi_\sigma$ from Fact \ref{2.21} and 
of Lemma 3.5  of \cite{BSTY19}, 
since every subdivision morphism $\pi_\sigma$ is recognizable in the full shift, as follows directly from the definition of $\pi_\sigma$.

In order to show the equivalence (1) $\Longleftrightarrow$ (2) 
we apply Definition \ref{6.1} to the morphism $\alpha_\sigma$ and the subshift $\pi_\sigma(X)\, $:
we observe that, since $|\alpha_\sigma(x)| = 1$ for any letter $x \in \cal A_\sigma\,$, in the hypothesis (*) of Definition \ref{6.1} the integers $k$ \and $\ell$ are necessarily equal to 0. But in this case the conclusion ${\bf x} = {\bf x}'$ stated there amounts precisely to assuring that the map $\alpha_\sigma^\Z$ is injective on $\pi_\sigma(X)$, or in other words, that $\alpha_\sigma^X$ is injective.

\smallskip

The equivalence (2) $\Longleftrightarrow$ (3) is immediate, since 
any subshift-isomorphism preserves orbits and shift-periods, while conversely, any shift-orbit injective letter-to-letter morphism could only fail to be injective if on some periodic orbit the shift-period is not preserved.

\smallskip

Finally, the equivalence (3) $\Longleftrightarrow$ (4) is a direct consequence of 
Lemma \ref{2.3e}, since every subdivision morphism $\pi_\sigma$ is shift-orbit injective and shift-period preserving in the full shift (see Lemma 
5.3  of \cite{PartI}).
\end{proof}

Note that the equivalence of the statements (1) and (2) from Proposition \ref{3m.1} has already been observed in 
\cite{DP20}, Proposition 2.4.24. Indeed, Fabien Durand has suggested to us to use this equivalence in order to derive the following corollary. 
In the mean time we have obtained a result which is actually a bit stronger: 
it turns out 
(see Theorem 
5.5 of \cite{PartI})
that the hypothesis ``shift-orbit injective'' suffices to obtain the same conclusion  
as stated in Corollary \ref{3m.2} below,
but the proof is much less direct.

\begin{cor}
\label{3m.2}
For any non-erasing morphism $\sigma: \cal A^* \to \cal B^*$ and any subshift $X \subset \cal A^\Z$ the measure transfer map $\sigma_X^\cal M: \mu \to \mu^\sigma$ is injective if $\sigma$ is recognizable in $X$.
\end{cor}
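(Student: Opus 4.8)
The plan is to exploit the canonical factorization $\sigma = \alpha_\sigma \circ \pi_\sigma$ from Fact \ref{2.21} together with the functoriality of the measure transfer (Theorem \ref{2.2.2}(3b)), which gives $\sigma^\cal M = (\alpha_\sigma)^\cal M \circ (\pi_\sigma)^\cal M$. By the support statement of Theorem \ref{2.2.2}(3c), the map $(\pi_\sigma)^\cal M$ carries $\cal M(X)$ into $\cal M(\pi_\sigma(X))$, and $(\alpha_\sigma)^\cal M$ carries $\cal M(\pi_\sigma(X))$ into $\cal M(\alpha_\sigma(\pi_\sigma(X))) = \cal M(\sigma(X))$, the last identity being the compatibility of image subshifts with composition. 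Hence the restriction/co-restriction factors as $\sigma^\cal M_X = (\alpha_\sigma)^\cal M_{\pi_\sigma(X)} \circ (\pi_\sigma)^\cal M_X$. Since a composition of injective maps is injective, it suffices to prove that each of the two factors is injective; the recognizability hypothesis will be needed only for the second one.

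For the subdivision factor $(\pi_\sigma)^\cal M_X$ I would argue that it is injective for \emph{any} non-erasing $\sigma$, by exhibiting an explicit recovery of the cylinder values of $\mu$ from $\mu^{\pi_\sigma}$. Fix $v \in \cal A^*$ and consider the word $\pi_\sigma(v) \in \cal A_\sigma^*$. Because each subdivision letter $a_i(k)$ occurs only inside the block $\pi_\sigma(a_i)$ and only in position $k$, the word $\pi_\sigma(v)$ is a concatenation of full blocks whose first and last letters pin down the first and last letters of $v$ with no overhang; consequently the shortest word whose $\pi_\sigma$-image contains $\pi_\sigma(v)$ as a factor is $v$ itself, i.e.\ $\widehat{\pi_\sigma(v)} = v$. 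By the defining formula $\mu^{\pi_\sigma}([w]) = \mu([\widehat w])$ this yields $\mu^{\pi_\sigma}([\pi_\sigma(v)]) = \mu([v])$ for every $v \in \cal A^*$. Thus every cylinder value of $\mu$ is determined by $\mu^{\pi_\sigma}$, so $(\pi_\sigma)^\cal M_X$ is injective.

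For the letter-to-letter factor I would invoke Proposition \ref{3m.1}: since $\sigma$ is recognizable in $X$, the map $\alpha_\sigma^X: \pi_\sigma(X) \to \sigma(X)$ is an isomorphism of subshifts, i.e.\ a shift-commuting homeomorphism. As $\alpha_\sigma$ is letter-to-letter, the transfer $(\alpha_\sigma)^\cal M$ is by construction the push-forward $(\alpha_\sigma)_*$, so on $\cal M(\pi_\sigma(X))$ it coincides with $(\alpha_\sigma^X)_*$. Push-forward under a homeomorphism is a bijection between Borel measures, and commuting with $T$ makes it preserve shift-invariance; hence $(\alpha_\sigma)^\cal M_{\pi_\sigma(X)}: \cal M(\pi_\sigma(X)) \to \cal M(\sigma(X))$ is a bijection, in particular injective. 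Composing with the previous step then gives the injectivity of $\sigma^\cal M_X$.

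The step I expect to require the most care is the first factor: one must verify precisely that the (a priori possibly formal) symbol $\widehat{\pi_\sigma(v)}$ equals $v$, i.e.\ that $\pi_\sigma(v)$ fills full blocks at both ends so that no strictly shorter $\widehat w$ can produce it as a factor, and that no boundary letter of $\pi_\sigma(v)$ is ambiguous. Everything else is either functoriality of $\sigma^\cal M$ (already established in \cite{PartI}) or the translation of recognizability into the subshift isomorphism $\alpha_\sigma^X$ furnished by Proposition \ref{3m.1}, so once this combinatorial point is nailed down the conclusion follows immediately.
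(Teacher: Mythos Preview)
Your proof is correct and follows essentially the same route as the paper: decompose $\sigma = \alpha_\sigma \circ \pi_\sigma$, use functoriality to split $\sigma^\cal M_X$, and then handle the two factors separately via the explicit recovery formula $\mu^{\pi_\sigma}([\pi_\sigma(v)]) = \mu([v])$ for the subdivision part and Proposition~\ref{3m.1}(2) for the letter-to-letter part. The only difference is that the paper outsources the injectivity of $(\pi_\sigma)^\cal M$ to a lemma in \cite{PartI}, whereas you supply the combinatorial verification $\widehat{\pi_\sigma(v)} = v$ directly; your concern about that step being the delicate one is well-placed but the argument you give is sound.
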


\begin{proof}
We decompose $\sigma = \alpha_\sigma \circ \pi_\sigma$ as in Fact \ref{2.21}, so that from the functoriality of the measure transfer (see property (3b) of Theorem \ref{2.2.2}) we have $\sigma_X^\cal M = (\alpha_\sigma^X)^\cal M \circ \pi_\sigma^\cal M$. The injectivity of $\pi_\sigma^\cal M$ is immediate from the definition of a subdivision morphism (see Lemma 
5.4 of \cite{PartI}), and the injectivity of $(\alpha_\sigma^X)^\cal M$ is a direct consequence of Proposition \ref{3m.1} (2).
\end{proof}

\begin{rem}
\label{added-late}
Consider any 
non-erasing morphism $\sigma: \cal A^* \to \cal B^*$ and any subshift $X \subset \cal A^\Z$ with image subshift $Y = \sigma(X) \subset \cal B^\Z$.

\smallskip
\noindent
(1)
Assume that the subshift $Y$ contains a periodic word $w^{\pm \infty}$ for some $w \in \cal B^* \smallsetminus \{\epsilon\}$, and that the morphism $\sigma$ is shift--orbit injective. Then, in order for $\sigma$ to be shift-period preserving in $X$, a necessary condition is that at least one of the letters $a_i \in \cal A$ satisfies $|\sigma(a_i)| \leq |w|$. 

As a consequence, unless a given subshift $Y$ is aperiodic, in any everywhere growing $S$-adic development of $Y$ there will always be infinitely many level morphisms which are not recognizable in their corresponding level subshift.

\smallskip
\noindent
(2)
This has sparked the following weakening of the notion of ``recognizability'' which has become recently very popular (see for instance 
\cite{3D+St}).

The morphism $\sigma$ is said to be {\em recognizable for aperiodic points in $X$} if the conclusion in Definition \ref{6.1} holds under the strengthened assumption that ${\bf y}$ is not a periodic word. 

\smallskip
\noindent
(3)
From the above proof of Proposition \ref{3m.1} we observe that the property ``shift-orbit injective in $X$'' implies the property ``recognizable for aperiodic points in $X$''. 

Indeed, since the subdivision morphism $\pi_\sigma$ is always shift-orbit injective and shift-periodic preserving (and thus recognizable) in the full shift, the property ``$\sigma$ is recognizable for aperiodic points in $X$'' is equivalent to ``$\alpha_\sigma^X$ is recognizable for aperiodic points in $\pi_\sigma(X)$''.
This in turn is equivalent to stating that every non-periodic word in $\sigma(X)$ has precisely one preimage under the letter-to-letter map $\alpha_\sigma$. But since we assume that $\sigma$ and hence $\alpha_\sigma^X$ is shift-orbit injective, two distinct such preimages must lie in the same shift-orbit, which implies that their image in $\sigma(X)$ must be periodic.
\end{rem}

\section{The measure transfer via vector towers}
\label{sec:3}

In this section we will consider a subshift $X$ given by means of a directive sequence, an invariant measures $\mu$ on $X$ given by means of a vector tower on this directive sequence, and a morphism $\tau: X \to Y = \tau(X)$ which we use to build a new directive sequence for $Y$ by simply adding $\tau$ at the bottom to the given sequence. Then the given vector tower is naturally transferred to a new vector tower on the new directive sequence, and, as do all vector towers, it defines an invariant measure $\mu'$ on the subshift $Y$ generated by this new sequence. The main goal of this section is to show that the new measure $\mu'$ is precisely the image of given measure $\mu$ under the transfer map $\tau^\cal M$ induced by the morphism $\tau$ 
(see Theorem \ref{2.2.2} (1)).

For convenience we summarize the running hypotheses for this section as follows:

\begin{assumption}
\label{3.0.2}
Let $\tau: \cal A^* \to \cal B^*$ be a non-erasing morphism of free monoids over finite alphabets $\cal A$ and $\cal B$, and let $\bvec \sigma = 
(\sigma_n)_{n \geq 0}$ be 
an everywhere growing directive sequence with 
base level 
alphabet $\cal A_0 = \cal A$. Let $X := X_{\tiny \bvec \sigma} \subset \cal A^\Z$ be the subshift generated by $\bvec \sigma$, and denote by $Y := \tau(X)$ the image subshift of $X$ given by the morphism $\tau$ (see 
Remark \ref{image-shift}).
\end{assumption}

\begin{defn-rem}
\label{3.0.5d}
Let $\tau$ and $\bvec \sigma = (\sigma_n)_{n \geq 0}$ as well as the subshifts $X$ and $Y$ be as in Assumption \ref{3.0.2}.

\smallskip
\noindent
(1)
We define 
a second 
``prolonged'' 
directive sequence 
$\bvec \sigma^\tau = 
(\sigma'_n)_{n \geq 0}$
by setting $\sigma'_n := \sigma_{n-1}$ for any level $n \geq 1$ and $\sigma'_0 := \tau$, and observe 
from Lemma \ref{2.2e} 
that the subshift 
$X_{\tiny \bvec \sigma^\tau}$ generated by 
$\bvec \sigma^\tau$ agrees precisely with the $\tau$-image subshift $Y = \tau(X) \in \cal B^\Z$.

\smallskip
\noindent
(2)
Consider now any vector tower $\bvec v = (\vec v_n)_{n \geq 0}$ over $\bvec \sigma$, and let $\mu = \frak m_{\tiny\bvec \sigma}(\bvec v)$ be the invariant measure on $X$ associated to $\bvec v$ via Theorem \ref{thm1}.
We obtain 
a prolonged 
vector tower 
$\bvec v^\tau = (\vec v\, '_{\! n})_{n \geq 0}$ over 
$\bvec \sigma^\tau$ by setting $\vec v\, '_{\! n} = \vec v_{n-1}$ for any level $n \geq 1$, and by setting $\vec v\, '_{\! 0} := M(\tau) \cdot \vec v_0\,$. Let $\mu'$ be the associated measure on $Y$, i.e.
\begin{equation}
\label{eq3.0.5d}
\mu' =  
\frak m_{\tiny\bvec \sigma^\tau}(\bvec v^\tau) \, .
\end{equation}
\end{defn-rem}

We can now 
link up 
the measure transfer map defined and studied in \cite{PartI} 
with 
the technology of vector towers from our previous papers \cite{BHL1},\cite{ BHL2}. The following will be the basis for all results presented in this paper:

\begin{prop}
\label{3.1d}
Let $\tau, \bvec \sigma$ and $X$ be as in Assumption \ref{3.0.2}, and
let $\bvec v = (\vec v_n)_{n \geq 0}$ be a vector tower over $\bvec \sigma$, 
with associated invariant measure $\mu = \frak m_{\tiny\bvec \sigma}(\bvec v)$ on $X$. 
Let 
$\bvec \sigma^\tau = 
(\sigma'_n)_{n \geq 0}\,$,  
$\bvec v^\tau = (\vec v\, '_{\! n})_{n \geq 0}$ 
and $\mu' = 
\frak m_{\tiny\bvec \sigma^\tau}(\bvec v^\tau)$
be as in Definition-Remark \ref{3.0.5d}. 

Then the measure transfer map $\tau^\cal M: \cal M(\cal A^\Z) \to \cal M(\cal B^\Z)$ induced by the morphism $\tau$ satisfies:
$$\mu' = \tau^\cal M(\mu) \,\,\, [= \mu^\tau]$$
\end{prop}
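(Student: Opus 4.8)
The plan is to compare the two measures cylinder by cylinder. Since $\mu'$ and $\mu^\tau = \tau^\cal M(\mu)$ are both invariant measures on $\cal B^\Z$, and any invariant measure is determined by its values on the cylinders $[w']$, $w' \in \cal B^*$, it suffices to prove $\mu'([w']) = \mu^\tau([w'])$ for every such $w'$. First I would unwind the definition of $\mu' = \frak m_{\tiny \bvec \sigma^\tau}(\bvec v^\tau)$ through the defining limit (\ref{eq2.2}). Using $\cal A'_n = \cal A_{n-1}$, $\vec v\,'_{\!n} = \vec v_{n-1}$ and the telescoping identity $\sigma'_{[0,n)} = \tau \circ \sigma_{[0,n-1)}$ for $n \geq 1$, and reindexing by $m = n-1$, this rewrites as
$$\mu'([w']) = \lim_{m \to \infty} \sum_{a_j \in \cal A_m} \vec v_m(a_j)\, \big|\tau(\sigma_{[0,m)}(a_j))\big|_{w'}\,.$$

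The heart of the proof is a purely combinatorial decomposition of occurrences. For any finite word $W = W_1 \cdots W_L \in \cal A^*$ I would establish
$$\big|\tau(W)\big|_{w'} = \sum_{w_k \in \cal A^*} |W|_{w_k} \cdot \lfloor \tau(w_k) \rfloor_{w'}\,,$$
with $\lfloor \tau(w_k) \rfloor_{w'}$ the number of essential occurrences of $w'$ in $\tau(w_k)$. The point is that each occurrence of $w'$ inside $\tau(W) = \tau(W_1)\cdots\tau(W_L)$ has its first letter in a unique block $\tau(W_s)$ and its last letter in a unique block $\tau(W_t)$ with $s \leq t$; the factor $w_k := W_{[s,t]}$ is then precisely the one in which this occurrence of $w'$ is essential. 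This assignment is a bijection between occurrences of $w'$ in $\tau(W)$ and pairs consisting of a factor occurrence $W_{[s,t]}$ of $W$ together with an essential occurrence of $w'$ in $\tau(W_{[s,t]})$; regrouping the factor occurrences by their common word value $w_k$ yields the displayed identity.

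Substituting $W = \sigma_{[0,m)}(a_j)$ and interchanging the two summations then gives
$$\mu'([w']) = \sum_{w_k \in \cal A^*} \lfloor \tau(w_k) \rfloor_{w'} \cdot \lim_{m\to\infty} \sum_{a_j \in \cal A_m} \vec v_m(a_j)\, \big|\sigma_{[0,m)}(a_j)\big|_{w_k} = \sum_{w_k \in \cal A^*} \lfloor \tau(w_k) \rfloor_{w'} \cdot \mu([w_k])\,,$$
where the inner limit is recognized as $\mu([w_k]) = \mu^{\tiny \bvec v}([w_k])$ via (\ref{eq2.2}). The interchange is legitimate because, for a fixed $w'$, only finitely many $w_k$ satisfy $\lfloor \tau(w_k) \rfloor_{w'} \neq 0$ (those with $|w_k| \leq \frac{|w'|-2}{\langle \tau \rangle} + 2$, as in Proposition \ref{2.8.formula}), so the outer sum is finite and commutes with the limit. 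For $|w'| \geq 2$ the right-hand side is exactly the value of $\mu^\tau([w'])$ given by Proposition \ref{2.8.formula}, while for $|w'| = 1$ an essential occurrence forces $w_k$ to be a single letter and the expression reduces to the formula of Proposition \ref{2.2.3}(b); in either case $\mu'([w']) = \mu^\tau([w'])$, and hence $\mu' = \tau^\cal M(\mu)$.

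The step I expect to be the main obstacle is the combinatorial identity: one must formulate \emph{essential occurrence} with enough care that every occurrence of $w'$ in $\tau(W)$ is assigned to exactly one carrying factor $W_{[s,t]}$, correctly handling the degenerate case $s = t$ (an occurrence sitting inside a single image block) and ruling out any double counting or omission at the block boundaries. Once this bijection is pinned down, the remaining manipulations — telescoping, reindexing, and the finite interchange of sum and limit — are routine.
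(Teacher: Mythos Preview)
Your proof is correct and follows the same overall plan as the paper's --- compare the two defining limits cylinder by cylinder --- but the key combinatorial step is handled differently. The paper works through the subdivision decomposition $\tau = \alpha_\tau \circ \pi_\tau$ and the formula $\mu^\tau(w') = \sum_{w_i \in W(w')} \mu(\hat w_i)$ from Theorem~\ref{2.2.2}(2); it then sandwiches the level-$n$ approximating sums between two inequalities whose discrepancy $K_n$ is shown to vanish via Lemma~\ref{2.8e}. You instead invoke Proposition~\ref{2.8.formula} and establish the \emph{exact} identity $|\tau(W)|_{w'} = \sum_{w_k} |W|_{w_k}\cdot \lfloor \tau(w_k)\rfloor_{w'}$ at every finite level, so no error term and no appeal to Lemma~\ref{2.8e} is required --- just a finite interchange of sum and limit. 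Your route is a bit more direct; the paper's stays closer to the original definition of $\mu^\tau$ but carries an error bound that, with your bijection in hand, one sees is actually unnecessary. Underneath, both arguments encode the same correspondence between occurrences of $w'$ in $\tau(W)$ and their minimal carrying factors in $W$.
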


\begin{proof}
In this proof we will freely use the terminology from \cite{PartI} as recalled in section \ref{sec:2.2}.

For any word $w' \in \cal B^*$ we consider in the subdivision monoid $\cal A_\tau^*$ the subset 
$W(w')$ of preimages $w_i$ of $w'$ under the induced letter-to-letter morphism $\alpha_\tau: \cal A_\tau^* \to \cal B^*$. 
For each $w_i \in W(w')$ consider 
(as in the paragraph subsequent to Fact \ref{2.21}) 
the word $\hat w_i \in \cal A^*$ 
defined by the conditions that (a) 
its canonically subdivided image $\pi_\tau(\hat w_i)$ contains $w_i\,$, and that (b) the word $\hat w_i$ is shortest among all words in $\cal A^*$ 
which satisfy (a). 
Recall from (\ref{eq2.X}) that either $\hat w_i$ exists and is unique, or else we formally set $\mu(\hat w_i) = 0$ for any $\mu \in \cal M(\cal A^\Z)$. 
From Theorem \ref{2.2.2} (2) 
we know that $\mu^\tau(w') = \underset{w_i \in W(w')}{\sum} \mu(\hat w_i)\,$ 
(with $\mu^\tau = \tau^\cal M(\mu)$ as before).

For the purpose of using formula 
(\ref{eq2.2}) 
we consider now the value of the approximating sum on its right hand side 
of this formula 
for any (large) 
level 
$n-1$, for each of the words $\hat w_i$ and the given vector tower $\bvec v$, i.e. the term 
(see (\ref{eq2.17.5}) for the notation)
\begin{equation}
\label{eq3.1d}
\sum_{a \,\in \cal A_{n-1}} \vec v_{n-1}(a) 
\, 
|\sigma_{[0,n-1)}(a)|_{\hat w_i}\, .
\end{equation}
We sum up the results of (\ref{eq3.1d}) over all $w_i \in W(w')$ 
to get
\begin{equation}
\label{eq3.1.5}
\sum_{w_i \in W(w')}\sum_{a \,\in \cal A_{n-1}} \vec v_{n-1}(a) 
\, 
|\sigma_{[0,n-1)}(a)|_{\hat w_i}\, ,
\end{equation}
and compare the obtained sum to 
the limit on the right hand side of 
(\ref{eq2.2}), 
when this formula is applied 
to $w'$ and to the vector tower 
$\bvec v^\tau$ 
over the prolonged directive sequence $\bvec \sigma^\tau$. 
The $n$-th term of this limit gives 
the sum
\begin{equation}
\label{eq3.2d}
\sum_{a \,\in \cal A_{n-1}} \vec v\,'_{\! n}(a) 
\, 
|\sigma'_{[0,n)}(a)|_{w'}\, .
\end{equation}

When comparing the two sums (\ref{eq3.1.5}) and (\ref{eq3.2d}) 
we keep in mind that according to the set-up from Definition-Remark \ref{3.0.5d} for any $n \geq 1$ we have $\vec v\,'_{\! n}(a) = \vec v_{n-1}(a)$ for any $a \in \cal A_{n-1}\,$, as well as $\sigma'_{[0,n)} = \tau \circ \sigma_{[0,n-1)}\,$.

We now notice that each occurrence of any of the $\hat w_i$ in any of the image words $\sigma_{[0, n-1)}(a)$ with $a \in \cal A_{n-1}$ defines precisely an occurrence of $w_i$ in $\pi_\tau(\sigma_{[0, n-1)}(a))$, and thus an occurrence of $w'$ in $\alpha_\tau(\pi_\tau(\sigma_{[0, n-1)}(a))) = \tau(\sigma_{[0, n-1)}(a)) = \sigma'_{[0, n)}(a)$. 
Furthermore, two distinct occurrences of $\hat w_i$ in some $\sigma_{[0, n-1)}(a)$ define distinct occurrences of $w_i$ in $\pi_\tau(\sigma_{[0, n-1)}(a))$, and thus distinct occurrences of $w'$ in $\sigma'_{[0, n)}(a)$. The same is 
true for occurrences of distinct $\hat w_i$ in $\sigma_{[0, n-1)}(a)$.
It follows (using the above recalled equality $\vec v\, '_{\! n} = \vec v_{n-1}$) that 
\begin{equation}
\label{eq3.5e}
\sum_{w_i \in W(w')} \, \sum_{a \,\in \cal A_{n-1}} \vec v_{n-1}(a) 
\, 
|\sigma_{[0,n-1)}(a)|_{\hat w_i}
\leq
\sum_{a \,\in \cal A_{n-1}} \vec v\, '_n(a) 
\, 
|\sigma'_{[0,n)}(a)|_{w'}
\end{equation}

But the opposite inequality is also true, up to a constant $K_n$ which only depends on $\bvec \sigma$ and not on 
$\bvec v\,$: 
\begin{equation}
\label{eq3.6e}
\sum_{a \,\in \cal A_{n-1}} \vec v\, '_n(a) 
\, 
|\sigma'_{[0,n)}(a)|_{w'}
\leq
\sum_{w_i \in W(w')} \, \sum_{a \,\in \cal A_{n-1}} \vec v_{n-1}(a) 
\, 
|\sigma_{[0,n-1)}(a)|_{\hat w_i}
+ K_n
\end{equation}
Indeed, any occurrence of $w'$ in $\sigma'_{[0, n)}(a)$ defines in a unique manner an occurrence of some $w_i$ in $\pi_\tau(\sigma_{[0, n-1)}(a))$. The latter defines (again uniquely) an occurrence of $\hat w_i$ in $\sigma_{[0, n-1)}(a)$, unless the corresponding occurrence of $w_i$ in $\pi_\tau(\sigma_{[0, n-1)}(a))$ takes place in a suffix or prefix of length bounded by the maximum $m(w') \geq 0$ of all $|\hat w_i|$. We hence deduce:
$$
K_n \leq 2 m(w') 
\sum_{a \,\in \cal A_{n-1}} \vec v_{n-1}(a) 
$$
It follows now from 
Lemma \ref{2.8e} 
that the right hand side of the last inequality tends to 0 for $n \to \infty$, so that we obtain 
from (\ref{eq3.5e}) and (\ref{eq3.6e}) 
through the above definitions $\mu = \frak m_{\tiny \bvec \sigma}(\bvec v) = \mu^{\tiny \bvec v}$ and $\mu' = 
\frak m_{\tiny \bvec \sigma^\tau}(\bvec v^\tau) = \mu^{\tiny \bvec v^\tau}$ the desired result
$$
\mu^\tau(w') = 
\sum_{w_i \in W(w')} \left( \lim_{n \to \infty} \sum_{a \,\in \cal A_{n-1}} \vec v_{n-1}(a) 
\, 
|\sigma_{[0,n-1)}(a)|_{\hat w_i} \right)$$
$$=
\lim_{n \to \infty}\sum_{a \,\in \cal A_{n-1}} \vec v\,'_n(a) 
\, 
|\sigma'_{[0,n)}(a)|_{w'}
=
\mu'(w')
$$
for any $w' \in \cal B^*$.
\end{proof}

As a first application of the above shown ``basic'' Proposition \ref{3.1d} we derive:

\begin{prop}
\label{4.5}
For any non-erasing morphism $\sigma:\cal A^* \to \cal B^*$ and
any subshift $X \subset \cal A^\Z$ 
with image subshift 
$\sigma(X) $
the induced measure 
transfer 
map 
$$\sigma^\cal M: \cal M(\cal A^\Z) \to \cal M(\cal B^\Z)\, , \,\, \mu \mapsto \mu^\sigma$$
maps the measure cone $\cal M(X)$ surjectively to the measure cone $\cal M(\sigma(X))$.
\end{prop}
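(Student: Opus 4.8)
The plan is to deduce the surjectivity of $\sigma^\cal M$ on $\cal M(X)$ from the surjectivity of the evaluation map $\frak m$ (Theorem \ref{thm1}) combined with the ``basic'' Proposition \ref{3.1d}, by passing through a directive sequence for $X$ and its prolongation by $\sigma$.

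First I would use Fact \ref{2.1d} to pick an everywhere growing directive sequence $\bvec\rho = (\rho_n)_{n\geq 0}$ with base alphabet $\cal A_0 = \cal A$ that generates $X$. Taking $\sigma$ in the role of the morphism $\tau$ of Assumption \ref{3.0.2}, I form the prolonged directive sequence $\bvec\rho^\sigma = (\rho'_n)_{n\geq 0}$ of Definition-Remark \ref{3.0.5d}, with $\rho'_0 = \sigma$ and $\rho'_n = \rho_{n-1}$ for $n\geq 1$. This sequence generates $Y = \sigma(X)$, and it is again everywhere growing: since $\rho'_{[0,n)} = \sigma \circ \rho_{[0,n-1)}$ and $\sigma$ is non-erasing, one has $|\rho'_{[0,n)}(a)| \geq |\rho_{[0,n-1)}(a)|$, so the minimal letter-image lengths $\beta_-(n)$ still tend to $\infty$.

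The key step is the observation that the prolongation operation $\bvec v \mapsto \bvec v^\sigma$ is a \emph{bijection} from $\cal V(\bvec\rho)$ onto $\cal V(\bvec\rho^\sigma)$. Injectivity is immediate from the defining formulas. For surjectivity, given a vector tower $\bvec w = (\vec w_n)_{n\geq 0}$ over $\bvec\rho^\sigma$, I would set $\vec v_n := \vec w_{n+1}$: the compatibility relations $\vec w_n = M(\rho'_n)\vec w_{n+1}$ at the levels $n\geq 1$ read $\vec w_n = M(\rho_{n-1})\vec w_{n+1}$, i.e.\ they are exactly the compatibility relations $\vec v_k = M(\rho_k)\vec v_{k+1}$ (with $k = n-1 \geq 0$) for $\bvec v = (\vec v_n)_{n\geq 0}$ over $\bvec\rho$, and non-negativity of the $\vec v_n$ is inherited from that of the $\vec w_n$; meanwhile the level-$0$ relation $\vec w_0 = M(\rho'_0)\vec w_1 = M(\sigma)\vec v_0$ shows precisely that $\bvec w = \bvec v^\sigma$. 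Thus every vector tower over $\bvec\rho^\sigma$ is the prolongation of a (unique) vector tower over $\bvec\rho$.

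With this in hand the proposition follows quickly. Given any $\nu \in \cal M(\sigma(X))$, Theorem \ref{thm1} applied to the everywhere growing sequence $\bvec\rho^\sigma$ (which generates $\sigma(X)$) furnishes $\bvec w \in \cal V(\bvec\rho^\sigma)$ with $\frak m_{\tiny\bvec\rho^\sigma}(\bvec w) = \nu$. Writing $\bvec w = \bvec v^\sigma$ and putting $\mu := \frak m_{\tiny\bvec\rho}(\bvec v) \in \cal M(X)$, Proposition \ref{3.1d} gives $\sigma^\cal M(\mu) = \frak m_{\tiny\bvec\rho^\sigma}(\bvec v^\sigma) = \nu$. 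Hence $\cal M(\sigma(X)) \subseteq \sigma^\cal M(\cal M(X))$, which is the surjectivity claimed; the reverse inclusion is simply the co-restriction statement of Theorem \ref{2.2.2}(3c). The only genuinely new ingredient is the bijectivity of the prolongation on vector towers, so I expect the one point needing care to be the index-bookkeeping that matches the compatibility conditions across the level shift $n \leftrightarrow n+1$; everything else is an assembly of the cited results.
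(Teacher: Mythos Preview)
Your proof is correct and follows essentially the same route as the paper: choose an everywhere growing directive sequence for $X$, prolong it by $\sigma$, lift a given $\nu\in\cal M(\sigma(X))$ to a vector tower via Theorem~\ref{thm1}, truncate to obtain a vector tower over the original sequence, and invoke Proposition~\ref{3.1d}. Your write-up is in fact a bit more careful than the paper's, in that you explicitly verify that the prolonged sequence is everywhere growing and spell out why truncation gives a bijection $\cal V(\bvec\rho^\sigma)\to\cal V(\bvec\rho)$; the paper simply says ``truncating now the last term of $\bvec v'$ gives a vector tower $\bvec v$ on $\bvec\sigma$''.
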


\begin{proof}
We consider any everywhere growing directive sequence $\bvec \sigma$ which generates $X$; from Fact \ref{2.1d} we know that such $\bvec \sigma$ exists for any subshift $X$.
By prolonging $\bvec \sigma$ through the morphism $\sigma$ as explained above in Definition-Remark \ref{3.0.5d} we obtain any everywhere growing directive sequence 
$\bvec \sigma' 
:= \bvec \sigma^\sigma$ 
which generates $\sigma(X)$. We then apply Theorem 
\ref{thm1} 
to obtain for any measure $\mu' \in \cal M(\sigma(X))$ a vector tower $\bvec v'$ on $\bvec \sigma'$ with 
$\frak m_{\tiny \bvec \sigma'}(\bvec v') = \mu$. Truncating now the last term of $\bvec v'$ gives a vector tower $\bvec v$ on $\bvec \sigma$, which by 
Remark \ref{2.6e} (2) 
defines a measure $\mu := \frak m_{\tiny \bvec \sigma}(\bvec v)$ on $X$. 
We can now apply Proposition \ref{3.1d} to obtain $\mu' = \mu^\sigma\, [= \sigma^\cal M(\mu)]$.
\end{proof}

\begin{rem}
\label{3.3d}
We'd like to note that as a result of the material presented in this section we have now derived an alternative way how to understand the transferred measure $\mu^\sigma = \sigma^\cal M(\mu) \in \cal M(\cal B^\Z)$, for any non-erasing morphism $\sigma: \cal A^* \to \cal B^*$ and any invariant measure $\mu \in \cal M(\cal A^\Z)$.

It turns out that in many circumstances the use of vector towers as presented here is more convenient 
when dealing with $\mu^\sigma$ in practice, compared to the definition as studied in sections 
3 and 4 of 
\cite{PartI}, and also compared to the approximation method via weighted characteristic measures indicated in 
Remark 3.9 of \cite{PartI}.
\end{rem}


\section{Measure towers and vector towers}
\label{sec:4}

Throughout this section we will assume that 
$$\bvec \sigma = (\sigma_n: \cal A_{n+1}^* \to \cal A_n^*)_{n \geq 0}$$
is an everywhere growing directive sequence which generates a subshift $X = X_0 \subset \cal A_0^\Z$
(and where all level maps $\sigma_n$ are non-erasing, see Remark \ref{2.11x}). 
As in (\ref{eq2.11e}) 
we denote 
for any level $k \geq 0$ 
by $X_k \subset \cal A_k^\Z$ the 
intermediate subshift of level $k$, which is 
generated by the truncated sequence 
$
\bvec \sigma\chop_{\! k} = (\sigma_n: \cal A_{n+1}^* \to \cal A_n^*)_{n \geq k}\, $ 
from (\ref{eq2.trunc}).

\begin{defn}
\label{22.x.1}
A {\em measure tower on $\bvec \sigma$}, denoted by $\bvec \mu = (\mu_n)_{n \geq 0}$, is given by a sequence of measures $\mu_n \in \cal M(\cal A_n^\Z)$ which satisfy:
$$\mu_n = \sigma_n^\cal M(\mu_{n+1})$$
The set of measure towers 
on $\bvec \sigma$ will be denoted by $\cal M(\bvec \sigma)$.
\end{defn}

We will now construct a particular type of measure towers on a given directive sequence $\bvec \sigma$ as above, starting from a vector tower $\bvec v = (\vec v_n)_{n \geq 0}$ on $\bvec \sigma$. We first 
observe 
that for any intermediate level $k \geq 0$ we obtain from 
$\bvec \sigma$ 
via the truncated directive sequence 
$
\bvec \sigma\chop_{\! k}$ 
a ``truncated evaluation map'' $\frak m_k := \frak m_{\tiny 
\bvec \sigma\chop_{\! k}}: \cal V(
\bvec \sigma\chop_{\! k}) \to \cal M(X_k)$. From the vector tower $\bvec v$ we obtain similarly a ``truncated'' vector tower $
\bvec v\chop_{\! k} = (\vec v_n)_{n \geq k}$ on $
\bvec \sigma\chop_{\! k}\,$, which defines the corresponding 
shift-invariant ``level $k$ measure''
\begin{equation}
\label{eq4.1d}
\mu_k := \frak m_k(
\bvec v\chop_{\! k})
\end{equation}
on the subshift $X_k \subset \cal A_k^\Z$. From Proposition \ref{3.1d} we obtain directly that $\sigma_{k}^\cal M(\mu_{k+1}) = \mu_k$ for all $k \geq 0$, so that we have:

\begin{lem}
\label{4.2d}
For any vector tower $\bvec v$ 
on 
an everywhere growing 
directive sequence $\bvec \sigma$ the family 
of 
level $k$ 
measures $\mu_k$ as in (\ref{eq4.1d}), for all $k \geq 0$, defines a measure tower 
$\bvec{\frak m}(\bvec v) := (\mu_k)_{k \geq 0}$ on $\bvec \sigma$.
${}^{}$
\qed
\end{lem}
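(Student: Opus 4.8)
The plan is to prove the claim level by level, obtaining the single measure-tower relation $\mu_k = \sigma_k^\cal M(\mu_{k+1})$ for each $k \geq 0$ as a direct instance of the basic Proposition \ref{3.1d}. The idea is to run that proposition with the truncated sequence $\bvec \sigma\chop_{k+1}$ playing the role of the directive sequence $\bvec \sigma$ there, and with the bottom morphism $\tau$ specialized to $\sigma_k: \cal A_{k+1}^* \to \cal A_k^*$. Note that $\bvec \sigma\chop_{k+1}$ is everywhere growing (as is any truncation of an everywhere growing sequence), so Proposition \ref{3.1d} is indeed applicable; its subshift is $X_{k+1}$, and by (\ref{eq2.11.5e}) the image subshift $\sigma_k(X_{k+1})$ is exactly $X_k$.

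The heart of the argument is to check that the prolongation construction of Definition-Remark \ref{3.0.5d}, fed with $\bvec \sigma\chop_{k+1}$ and $\sigma_k$, reproduces the data one level lower. On the level of sequences this is immediate: prepending $\sigma_k$ to $\bvec \sigma\chop_{k+1} = (\sigma_n)_{n \geq k+1}$ yields $(\sigma_n)_{n \geq k} = \bvec \sigma\chop_{k}$. On the level of vector towers, the prolongation of $\bvec v\chop_{k+1} = (\vec v_n)_{n \geq k+1}$ is formed by prepending the bottom vector $M(\sigma_k)\cdot \vec v_{k+1}$. Here is the one place where the structure of a vector tower is used: by the compatibility condition (\ref{eq2.3.4}) one has $M(\sigma_k)\cdot \vec v_{k+1} = \vec v_k$, so the prepended vector is precisely $\vec v_k$ and the prolonged tower is exactly $\bvec v\chop_{k} = (\vec v_n)_{n \geq k}$.

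With these two identifications in hand, Proposition \ref{3.1d} applies verbatim: the measure associated via $\frak m_{k+1}$ to $\bvec v\chop_{k+1}$ is $\mu_{k+1}$, the measure associated via $\frak m_k$ to the prolonged tower $\bvec v\chop_{k}$ is $\mu_k$ (both by (\ref{eq4.1d})), and the proposition identifies the latter as the $\sigma_k$-transfer of the former, i.e. $\mu_k = \sigma_k^\cal M(\mu_{k+1})$. Since $k$ was arbitrary, the family $(\mu_k)_{k \geq 0}$ satisfies the defining relation of Definition \ref{22.x.1} and hence is a measure tower on $\bvec \sigma$.

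There is no genuine analytic difficulty to overcome, since all the quantitative work was already carried out in Proposition \ref{3.1d}; the only thing requiring care is the bookkeeping of the indices, i.e. verifying that ``prolong the truncation $\bvec \sigma\chop_{k+1}$ by $\sigma_k$'' really returns $\bvec \sigma\chop_{k}$ together with $\bvec v\chop_{k}$, rather than a merely isomorphic object. The compatibility relation (\ref{eq2.3.4}) is exactly the linchpin that makes the freshly prepended bottom vector $M(\sigma_k)\cdot\vec v_{k+1}$ coincide on the nose with the vector $\vec v_k$ already present in $\bvec v\chop_{k}$.
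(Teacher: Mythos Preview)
Your proof is correct and follows exactly the paper's approach: the paper states just before the lemma that ``From Proposition \ref{3.1d} we obtain directly that $\sigma_{k}^\cal M(\mu_{k+1}) = \mu_k$ for all $k \geq 0$'', and your argument is precisely the unpacking of this one-line justification, applying Proposition \ref{3.1d} with $\bvec \sigma\chop_{k+1}$ in the role of $\bvec \sigma$ and $\tau = \sigma_k$, and using the compatibility condition (\ref{eq2.3.4}) to identify the prolonged tower with $\bvec v\chop_k$.
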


\begin{defn-rem}
\label{back-and-forth}
Conversely, every measure tower $\bvec \mu = (\mu_n)_{n \geq 0}$ on 
a directive sequence $\bvec \sigma$ as above
determines a vector tower $\bvec \zeta(\bvec \mu) = (\vec v_n)_{n \geq 0}$ on $\bvec \sigma$, given by the 
letter frequency vectors $\vec v_n := \vec v(\mu_n) = \zeta_{X_n}(\mu_n)$ from (\ref{eq2.3.1}) and (\ref{eq2.5e}). The fact that $\bvec \zeta(\bvec \mu)$ is indeed a vector tower, i.e. that  
compatibility conditions (\ref{eq2.3.4}) are 
satisfied, is a direct application of 
Proposition 4.5 of \cite{PartI}, stated above as equality (\ref{eq2.3.2}).
\end{defn-rem}

The above set-up of measure towers and vector towers over a given directive sequence is very natural, and indeed, it turns out that the two are essentially equivalent. More precisely, we obtain:

\begin{prop}
\label{22.9}
For any everywhere growing directive sequence $\bvec \sigma$ 
there is a canonical 
$\R_{\geq 0}$-linear bijection 
$$\bvec \zeta: \cal M(\bvec \sigma) \to \cal V(\bvec \sigma)$$
between the 
cone of measure towers on one hand and the 
cone of vector towers on the other, given by the map
$$
\bvec \mu \mapsto \bvec \zeta(\bvec \mu) \qquad\text{and its inverse} \qquad 
\bvec v \mapsto \bvec{\frak m}(\bvec v) \, .$$
\end{prop}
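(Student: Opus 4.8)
The plan is to show that the two maps $\bvec\zeta$ and $\bvec{\frak m}$ defined in Definition-Remark \ref{back-and-forth} and Lemma \ref{4.2d} are mutually inverse $\R_{\geq 0}$-linear maps between the cones $\cal M(\bvec\sigma)$ and $\cal V(\bvec\sigma)$. First I would record linearity, which is essentially free: the map $\bvec\zeta$ is built level-by-level from the evaluation maps $\zeta_{X_n}$, each of which is $\R_{\geq 0}$-linear by (\ref{eq2.5e}); the map $\bvec{\frak m}$ is built from the maps $\frak m_k = \frak m_{\tiny\bvec\sigma\chop_{\!k}}$, each linear by Theorem \ref{thm1}. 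So the content of the proposition is entirely in the bijectivity, and the natural strategy is to verify the two composition identities
$$\bvec\zeta \circ \bvec{\frak m} = \mathrm{id}_{\cal V(\bvec\sigma)} \qquad\text{and}\qquad \bvec{\frak m}\circ\bvec\zeta = \mathrm{id}_{\cal M(\bvec\sigma)}\, .$$

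For the first identity I would start with a vector tower $\bvec v = (\vec v_n)_{n\geq 0}$, form the measure tower $\bvec{\frak m}(\bvec v) = (\mu_k)_{k\geq 0}$ with $\mu_k = \frak m_k(\bvec v\chop_{\!k})$, and then apply $\bvec\zeta$, i.e. compute $\vec v(\mu_k) = \zeta_{X_k}(\mu_k)$ at each level. The key step here is Proposition \ref{2.4d}(1), applied to the truncated directive sequence $\bvec\sigma\chop_{\!k}$ (which generates $X_k$): it gives $pr_k = \zeta_{X_k}\circ\frak m_k$ on $\cal V(\bvec\sigma\chop_{\!k})$, where $pr_k$ extracts the level-$k$ vector. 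Evaluating on the truncated tower $\bvec v\chop_{\!k}$ yields exactly $\zeta_{X_k}(\mu_k) = \vec v_k$, so $\bvec\zeta(\bvec{\frak m}(\bvec v)) = (\vec v_k)_{k\geq 0} = \bvec v$. This direction is clean once one notices that the base-level splitting of Proposition \ref{2.4d}(1) applies verbatim at every level after truncation.

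The harder direction, and the one I expect to be the main obstacle, is $\bvec{\frak m}\circ\bvec\zeta = \mathrm{id}$. Here we start from a measure tower $\bvec\mu = (\mu_n)_{n\geq 0}$, form the vector tower $\bvec v = \bvec\zeta(\bvec\mu)$ with $\vec v_n = \zeta_{X_n}(\mu_n)$, and must recover $\bvec\mu$ as $\bvec{\frak m}(\bvec v)$; that is, for each $k$ we must show $\frak m_k(\bvec v\chop_{\!k}) = \mu_k$. The subtle point is that a priori the map $\frak m_k$ is only surjective (Theorem \ref{thm1}), not injective, so knowing the letter-frequency vectors $\vec v_n$ does not obviously pin down the measure. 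The way I would resolve this is to use the compatibility built into a measure tower together with Proposition \ref{3.1d}. Concretely, fix $k$ and observe that $\mu_k = \sigma_k^\cal M(\mu_{k+1}) = \sigma_k^\cal M(\sigma_{k+1}^\cal M(\mu_{k+2})) = \cdots$, so $\mu_k = \sigma_{[k,m)}^\cal M(\mu_m)$ for every $m > k$; by surjectivity of $\frak m_m$ choose a vector tower $\bvec w$ over $\bvec\sigma\chop_{\!m}$ with $\frak m_m(\bvec w) = \mu_m$, and note that the associated letter-frequency tower must agree with $\bvec v$ on levels $\geq m$ by the first identity already proved. Prolonging $\bvec\sigma\chop_{\!m}$ down to level $k$ and applying Proposition \ref{3.1d} repeatedly (once per level) identifies $\sigma_{[k,m)}^\cal M(\mu_m)$ with $\frak m_k$ evaluated on the prolonged tower, whose level vectors are forced to coincide with $\bvec v\chop_{\!k}$ by the compatibility relation (\ref{eq2.3.4}) and the already-established first identity. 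Letting the construction run consistently over all levels then gives $\frak m_k(\bvec v\chop_{\!k}) = \mu_k$ for every $k$, which is the desired identity. The one point requiring care is the bookkeeping that the prolongation appearing in Proposition \ref{3.1d} at the bottom level matches the truncation used in defining $\frak m_k$; this is exactly the "adding one morphism at the bottom" construction of Definition-Remark \ref{3.0.5d}, so Proposition \ref{3.1d} applies directly at each telescoping step.
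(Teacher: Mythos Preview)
Your treatment of linearity and of the identity $\bvec\zeta\circ\bvec{\frak m} = \mathrm{id}_{\cal V(\bvec\sigma)}$ is correct and matches the paper exactly: both invoke Proposition \ref{2.4d}(1) at each truncation level $k$.

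For the other composition you attempt a direct proof of $\bvec{\frak m}\circ\bvec\zeta = \mathrm{id}_{\cal M(\bvec\sigma)}$, whereas the paper takes a different route: having obtained injectivity of $\bvec{\frak m}$ from the first identity, it argues separately that $\bvec{\frak m}$ is \emph{surjective} (using surjectivity of each $\frak m_k$ from Theorem \ref{thm1} applied to $\bvec\sigma\chop_{\!k}$, together with Lemma \ref{4.2d}). Bijectivity of $\bvec{\frak m}$ then gives bijectivity of $\bvec\zeta$, and the identity $\bvec{\frak m}\circ\bvec\zeta = \mathrm{id}$ drops out formally. This organization is shorter precisely because it never needs to track a specific tower through a limiting process.

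Your direct argument, by contrast, contains a slip that leaves a real gap. When you pick $\bvec w$ over $\bvec\sigma\chop_{\!m}$ with $\frak m_m(\bvec w) = \mu_m$, the first identity applied to $\bvec\sigma\chop_{\!m}$ only yields $\vec w_m = \zeta_{X_m}(\mu_m) = \vec v_m$; it does \emph{not} force $\vec w_n = \vec v_n$ for $n>m$, because $\frak m_n(\bvec w\chop_{\!n})$ need not equal $\mu_n$ for those $n$. Consequently the prolonged tower agrees with $\bvec v\chop_{\!k}$ only on levels $k,\ldots,m$, not globally, so you cannot conclude $\frak m_k(\bvec v\chop_{\!k}) = \mu_k$ from a single choice of $m$. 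Your phrase ``letting the construction run consistently over all levels'' then has to do genuine work: one must pass to the limit $m\to\infty$ (for instance via the monotone partial sums in (\ref{eq2.2}), combined with the equal-total-mass observation coming from $\zeta_{X_k}(\frak m_k(\bvec v\chop_{\!k})) = \vec v_k = \zeta_{X_k}(\mu_k)$). Once that limit step is made precise your argument goes through, but as written the conclusion does not follow from the stated steps.
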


\begin{proof}
The fact, that the composition $\bvec \zeta \circ \bvec{\frak m}
$ gives the identity on $\cal V(\bvec \sigma)$, 
follows directly from  Proposition \ref{2.4d} (1), 
when applied to all truncated sequences $\bvec \sigma\chop_{\! k}$ with $k \geq 0$. We obtain in particular that the map $\bvec{\frak m}$ is injective.

On the other hand, we can apply Theorem \ref{thm1} to each of the truncated sequences $
\bvec \sigma\chop_{\! k}$ 
to obtain the surjectivity of the 
map $\frak m_k: \cal V(
\bvec \sigma\chop_{\! k}) \to \cal M(X_k)$ for any level $k \geq 0$. 
It follows then directly from the definition set up in Lemma \ref{4.2d} above that the map $\bvec{\frak m}: \cal V(\bvec \sigma) \mapsto \cal M(\bvec \sigma)$ must be surjective.

Hence $\bvec{\frak m}$ is a bijective map, which implies that $\bvec \zeta$ must also be bijective, and that $\bvec{\frak m} \circ \bvec \zeta$ is the identity on $\cal M(\bvec \sigma)$.

The linearity of the maps $\bvec \zeta$ and $\bvec{\frak m}$ is a direct consequence of the linearity (see section \ref{sec:2.3}) of the maps $\frak m_k$ and $\zeta_{X_n}$ used in the above definitions of the measure 
or vector 
towers $\bvec{\frak m}(\bvec v) = (\frak m_k(
\bvec v\chop_{\! k}))_{k \geq 0}$ and $\bvec \zeta(\bvec \mu) = (\zeta_{X_n}(\mu_n))_{n \geq 0}$ respectively.
\end{proof}

Although a bit similar in notation, the two cones $\cal M(\bvec \sigma)$ and $\cal M(X_{\tiny\bvec \sigma})$ should not be confused. Indeed, 
without further assumptions on the given set-up, the structure of the cone $\cal M(\bvec \sigma)$ of measure towers will not only depend on the given subshift $X = X_{\tiny \bvec \sigma}$ but can vary quite a bit depending on the choice of the $S$-adic development $\bvec \sigma$ of $X$. 
More precisely, we have:

\begin{rem}
\label{4.5.5d}
For any everywhere growing directive sequence $\bvec \sigma$ which generates a subshift $X = X_{\tiny \bvec \sigma}$ the composition
\begin{equation}
\label{eq4.2e}
\frak m_{\tiny \bvec \sigma} \circ \bvec \zeta : \cal M(\bvec \sigma) \to \cal M(X)
\end{equation}
is linear and surjective 
since $\bvec \zeta$ is linear and bijective by Proposition \ref{22.9}, and $\frak m_{\tiny \bvec \sigma}$ is linear and surjective by Theorem \ref{thm1}. 
However, 
in general the map $\frak m_{\tiny \bvec \sigma} \circ \bvec \zeta$ will be far from being injective.
\end{rem}

We thus consider the following 
strengthening on the hypotheses of the given directive sequence, which has been considered already by several other authors in a related context (compare Definition 4.1 of \cite{BSTY19} or subsection 3.3 of 
\cite{DDMP}):

\begin{defn} 
\label{4.5d}
A directive sequence (or an $S$-adic development) $\bvec \sigma = (\sigma_n)_{n \geq 0}$ 
is called {\em totally recognizable} if every level map $\sigma_n$ 
is recognizable 
in the corresponding subshift $X_{n+1}$
(see Definition \ref{6.1} and Proposition \ref{recognizable-e}). 
If all but finitely many of the level maps $\sigma_n$ are 
recognizable 
in $X_{n+1}$, we call $\bvec \sigma$ {\em eventually recognizable}.
\end{defn}

\begin{thm}
\label{4.6d}
For any everywhere growing totally recognizable $S$-adic development $\bvec \sigma$ of a subshift $X$ and its associated cone $\cal V(\bvec \sigma)$ of vector towers the canonical linear map
$$\frak 
m_{\tiny \bvec \sigma}: \cal V(\bvec \sigma) \to \cal M(X)$$
is a bijection.

In particular, for any level $n \geq 0$ the map $\sigma_{[0, n)}^\cal M: \cal M(X_n) \to \cal M(X)$ is a 
linear bijection of cones.
Similarly, the same conclusion follows for the map 
$\frak m_{\tiny \bvec \sigma} \circ \bvec \zeta$ from (\ref{eq4.2e}).
\end{thm}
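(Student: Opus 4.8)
The plan is to deduce bijectivity of $\frak m_{\tiny \bvec \sigma}$ from injectivity alone, since surjectivity is already guaranteed by Theorem \ref{thm1}. Because $\bvec \zeta : \cal M(\bvec \sigma) \to \cal V(\bvec \sigma)$ is a linear bijection (Proposition \ref{22.9}), injectivity of $\frak m_{\tiny \bvec \sigma}$ is equivalent to injectivity of the composition $\frak m_{\tiny \bvec \sigma} \circ \bvec \zeta$ from (\ref{eq4.2e}), so I would work entirely on the side of measure towers $\cal M(\bvec \sigma)$.

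The first key step is to show that each level transfer map $\sigma_n^\cal M : \cal M(X_{n+1}) \to \cal M(X_n)$ is a bijection of cones. Surjectivity follows from Theorem \ref{1.1} (equivalently Proposition \ref{4.5}) applied to $\sigma = \sigma_n$ and $X_{n+1}$, since $\sigma_n(X_{n+1}) = X_n$ by (\ref{eq2.11.5e}). Injectivity follows from Corollary \ref{3m.2}, as total recognizability (Definition \ref{4.5d}) means precisely that each $\sigma_n$ is recognizable in $X_{n+1}$. Composing these level bijections and invoking functoriality of the measure transfer (property (3b) of Theorem \ref{2.2.2}), which gives $\sigma_{[0,n)}^\cal M = \sigma_0^\cal M \circ \cdots \circ \sigma_{n-1}^\cal M$, I then obtain at once the ``in particular'' claim that each $\sigma_{[0,n)}^\cal M : \cal M(X_n) \to \cal M(X)$ is a linear bijection.

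The second key step is to identify $\frak m_{\tiny \bvec \sigma} \circ \bvec \zeta$ with the bottom-level projection $\bvec \mu = (\mu_n)_{n \geq 0} \mapsto \mu_0$ on $\cal M(\bvec \sigma)$; this is immediate from $\bvec{\frak m} \circ \bvec \zeta = \mathrm{id}$ (Proposition \ref{22.9}) together with $\frak m_0 = \frak m_{\tiny \bvec \sigma}$ and $\bvec{\frak m}(\bvec v) = (\frak m_k(\bvec v\chop_{\! k}))_{k \geq 0}$ (Lemma \ref{4.2d}). Injectivity of this projection is then a short induction on the level: if two measure towers share the bottom term $\mu_0 = \mu_0'$, then the tower relation $\sigma_n^\cal M(\mu_{n+1}) = \mu_n$ combined with the injectivity of $\sigma_n^\cal M$ from the first step forces $\mu_{n+1} = \mu_{n+1}'$ whenever $\mu_n = \mu_n'$. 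Thus the bottom projection is injective, and being also surjective (Remark \ref{4.5.5d}) it is a bijection; hence so is $\frak m_{\tiny \bvec \sigma} \circ \bvec \zeta$, and via the bijection $\bvec \zeta$ also $\frak m_{\tiny \bvec \sigma}$ itself.

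The argument is in essence an assembly of the surjectivity (Theorem \ref{1.1}) and injectivity (Corollary \ref{3m.2}) of the level transfers with the tower dictionary of Proposition \ref{22.9}, so I anticipate no deep obstacle. The point that needs genuine care is matching the injectivity statement of Corollary \ref{3m.2} --- phrased for a single recognizable morphism $\sigma_X^\cal M : \cal M(X) \to \cal M(\sigma(X))$ --- with the level maps $\sigma_n^\cal M$. This requires knowing that every measure tower in $\cal M(\bvec \sigma)$ has each term supported in the correct level subshift, i.e. $\mu_n \in \cal M(X_n)$, which holds because $\bvec{\frak m}$ is surjective by Proposition \ref{22.9} and so $\mu_n = \frak m_n(\bvec v\chop_{\! n}) \in \cal M(X_n)$; only then, together with the identification $\sigma_n(X_{n+1}) = X_n$, does the injectivity of Corollary \ref{3m.2} genuinely propagate up every level of the tower.
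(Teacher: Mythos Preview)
Your proof is correct and follows essentially the same route as the paper: establish bijectivity of each level transfer $\sigma_n^\cal M$ (injectivity from Corollary \ref{3m.2}, surjectivity from Proposition \ref{4.5}), deduce that $\frak m_{\tiny \bvec \sigma} \circ \bvec \zeta$ is bijective, and then use the bijectivity of $\bvec \zeta$ from Proposition \ref{22.9} to conclude for $\frak m_{\tiny \bvec \sigma}$. The paper compresses the passage from ``each $\sigma_n^\cal M$ is bijective'' to ``$\frak m_{\tiny \bvec \sigma} \circ \bvec \zeta$ is bijective'' into a single sentence, whereas you spell it out by identifying $\frak m_{\tiny \bvec \sigma} \circ \bvec \zeta$ with the bottom-level projection and running the obvious induction up the tower; you also make explicit the point that every $\mu_n$ in a measure tower actually lies in $\cal M(X_n)$ (via surjectivity of $\bvec{\frak m}$), which is needed before Corollary \ref{3m.2} can be invoked at each level since Definition \ref{22.x.1} only asks $\mu_n \in \cal M(\cal A_n^\Z)$. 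These are genuine clarifications of steps the paper leaves implicit, but the strategy is the same.
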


\begin{proof}
From the assumption that $\bvec \sigma$ is totally recognizable it follows (using statement (3d) of Theorem \ref{2.2.2}) 
that the induced linear map 
$$(\sigma_n)^\cal M_{X_{n+1}}: \cal M(X_{n+1}) \to \cal M(X_{n})$$
is bijective for any level $n \geq 0$. It follows that the composed map 
$m_{\tiny \bvec \sigma}
\circ \bvec \zeta: \cal M(\bvec \sigma) \to \cal V(\bvec \sigma) \to  \cal M(X)$ from (\ref{eq4.2e}) is bijective. Since we know from Proposition \ref{22.9} that the map $\bvec \zeta$ is a bijection, 
we deduce that 
$m_{\tiny \bvec \sigma}$ 
must be bijective.
\end{proof}


\section{Directive sequences with ``small'' intermediate letter frequency cones}
\label{sec:left-over-2.8d}

In this section we will give 
a first 
application of the machinery set up in the previous two sections. But  before doing so we want to summarize, for the convenience of the reader, the various ingredients that the rich picture issuing from this set-up offers, and to list some basic facts in order to avoid potential misunderstandings. As an illustration, we give at the end of this section a detailed example, where all the data listed now can be seen in practice.

\smallskip

We use the same terminology as previously, i.e. 
$X \in \cal A^\Z$ is a subshift over the finite alphabet $\cal A = \cal A_0$, and $\bvec \sigma = (\sigma_n: \cal A_{n+1}^* \to \cal A_n^*)_{n \geq 0}$ is an everywhere growing directive sequence which generates $X$. We are 
particularly interested in the {\em intermediate letter frequency cones} $\cal C_n = \cal C_n(\bvec \sigma) \subset \R_{\geq 0}^{\cal A_n}$ and in particular in their dimension
$$c_n := \dim \cal C_n \leq \card(\cal A_n)\, .$$
The cone $\cal C_n$ is given as the image of the set $\cal V(\bvec \sigma)$ of vector towers under the level $n$ projection map $pr_n\,$, which amounts to stating that $\cal C_n$ is the intersection of the nested images of the non-negative alphabet cones of level $m \geq n$ under the telescoped level maps, i.e.
$$\cal C_n = \bigcap\,
\{
\R_{\geq 0}^{\cal A_n}\supseteq \ldots \supseteq M(\sigma_{[n, m)})(\R_{\geq 0}^{\cal A_{m+1}}) \supseteq \ldots\}$$
In particular one always has
\begin{equation}
\label{eq6.0e}
\cal C_n = M(\sigma_n) ( \cal C_{n+1})
\end{equation}
and thus 
$$c_n \leq c_{n+1}$$
for all $n \geq 0$.

An 
alternative interpretation of the intermediate level frequency cones is given through the level subshifts $X_n$ (defined by the truncated directive sequences $\bvec \sigma\chop_n$) and their measure cones $\cal M(X_n)$, which, when evaluated via the associated letter frequency vectors, results into
$$\cal C_n = \zeta_{X_n}(\cal M(X_n))\, ,$$
where $\zeta_{X_n}: \cal M(X_n) \to \R_{\geq 0}^{\cal A_n}$ is given for $\cal A_n = \{a_{n, 1}, \ldots, a_{n, d(n)}\}$ by $\mu \mapsto ([\mu(a_{n, 1})], \ldots, [\mu(a_{n, d(n)})])$ for any $\mu \in \cal M(X_n)$.

\smallskip

Our main focus here is to explain how this set-up and in particular the value of the $c_n$ can be used in order to find out information about the number $e(X) \in \N \cup \{\infty\}$ of invariant ergodic probability measures on $X$. 

\begin{rem}
\label{5.0.n}
Under the above stated conditions the following conclusions are immediate:
\begin{enumerate}
\item
It is quite possible that $e(X) > c_n$ for some ``low'' level $n \geq 0$, even if $\bvec \sigma$ is totally recognizable.
\item
The converse inequality, $e(X) < c_n$, is also possible, but in this case the directive sequence $\bvec \sigma$ is not totally recognizable. More precisely, in this case the telescoped morphism $\sigma_{[0, n)}$ is not recognizable.
\item
In any case, we always have 
$$e(X) \,\,\leq\,\, \lim c_n \,\,\leq\,\, \liminf (\card\,\cal A_n) \, ,$$
but in general both inequalities may well be strict.
\item
However, if $\bvec \sigma$ is totally recognizable, then we have
$$e(X) = \lim c_n$$
In particular, we recover the well known upper bound $e(X) \leq \liminf (\card\,\cal A_n)$, as well as the lower bounds $c_n \leq e(X)$ for all $n \geq 0$.
\end{enumerate}
\end{rem}

\medskip

From 
Remark \ref{5.0.n} (3) 
we observe directly that for any directive sequence $\bvec \sigma$ with 
finite alphabet rank 
(i.e. $\liminf(\card\ \cal A_n) < \infty$) 
there is a {\em critical level} $n_0 \geq 0$ such that one has 
\begin{equation}
\label{eq6.2e}
\text{$c_n = c_{n_0}\,\,$ for all $\,\,n \geq n_0\,\,$ and $\,\,c_n < c_{n_0}\,\,$ for all $\,\,n < n_0$\,.} 
\end{equation}
More generally, any everywhere growing directive sequence $\bvec \sigma$ (possibly with infinite alphabet rank) which possesses such a critical level has been termed in \cite{BHL1} {\em thinning}, and in the particular case where the critical level agrees with the base level $n_0 = 0$, the sequence $\bvec \sigma$ has been called {\em thin}. Of course, any thinning sequence can be made thin by simply truncating it at its critical level (or any level higher up); furthermore, we can telescope all levels below the critical level into a single ``thinning'' morphism. Subshifts that are ``thin'' in that they are generated by a thin (and 
in particular 
everywhere growing) directive sequence have the following useful property:

\begin{prop}[\cite{BHL1}]
\label{6.1d}
Let $X \subset \cal A^\Z$ be a subshift generated by a thin directive sequence $\bvec \sigma$. Then the letter frequency 
map $\zeta_X: \cal M(X) \to \R_{\geq 0}^\cal A$ co-restricts to a linear bijection
$$
\cal M(X) \to C(X) \, , \,\, \mu \mapsto (\mu(a_k))_{a_k \in \cal A} \, .
$$
In particular, 
any two invariant measures $\mu_1$ and $\mu_2$ on $X$ are equal if and only if one has $\mu_1([a_k]) = \mu_2([a_k])$ for the finitely many cylinders $[a_k]$ given by all letters $a_k \in \cal A$.
\end{prop}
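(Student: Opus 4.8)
The plan is to deduce everything from the single commutativity relation $pr_0 = \zeta_X \circ \frak m_{\tiny \bvec \sigma}$ of Proposition \ref{2.4d}(1), by first showing that for a thin sequence the base projection $pr_0 \colon \cal V(\bvec \sigma) \to \cal C(X)$ is itself a bijection. Recall that $\cal C(X) = \cal C_0 = \im(pr_0)$, so $pr_0$ is surjective by construction; the work is to establish its injectivity, and this is exactly where the thinness hypothesis will enter.

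First I would exploit the dimension hypothesis. Thinness means that the critical level is $n_0 = 0$, i.e. $c_n = \dim \cal C_n = c_0$ for every $n \geq 0$. On the other hand, iterating the identity \ref{eq6.0e}, namely $\cal C_n = M(\sigma_n)(\cal C_{n+1})$, yields $\cal C_0 = M(\sigma_{[0,n)})(\cal C_n)$ for all $n$. Hence the linear map $M(\sigma_{[0,n)})$ carries the real subspace $V_n \subseteq \R^{\cal A_n}$ spanned by $\cal C_n$ onto the subspace $V_0$ spanned by $\cal C_0 = \cal C(X)$. Since $\dim V_n = c_n = c_0 = \dim V_0$ by hypothesis, a surjective linear map between finite-dimensional spaces of equal dimension is an isomorphism; in particular $M(\sigma_{[0,n)})$ is injective on $V_n$. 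I expect this dimension-count step to be the only genuine obstacle; the rest is formal.

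Next I would deduce the injectivity of $pr_0$. Let $\bvec v = (\vec v_n)_{n \geq 0}$ and $\bvec w = (\vec w_n)_{n \geq 0}$ be vector towers with $\vec v_0 = \vec w_0$. For each fixed $n$ the compatibility condition \ref{eq2.3.4}, iterated, gives $\vec v_0 = M(\sigma_{[0,n)}) \vec v_n$ and likewise $\vec w_0 = M(\sigma_{[0,n)}) \vec w_n$. Since $\vec v_n = pr_n(\bvec v)$ and $\vec w_n = pr_n(\bvec w)$ both lie in $\cal C_n \subseteq V_n$, and $M(\sigma_{[0,n)})$ is injective on $V_n$, the equality $M(\sigma_{[0,n)}) \vec v_n = \vec v_0 = \vec w_0 = M(\sigma_{[0,n)}) \vec w_n$ forces $\vec v_n = \vec w_n$. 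As this holds for every $n$, we obtain $\bvec v = \bvec w$, so $pr_0$ is injective and therefore bijective.

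Finally I would run the diagram chase. From $pr_0 = \zeta_X \circ \frak m_{\tiny \bvec \sigma}$ and the injectivity of $pr_0$ it follows at once that $\frak m_{\tiny \bvec \sigma}$ is injective; combined with its surjectivity from Theorem \ref{thm1}, the map $\frak m_{\tiny \bvec \sigma} \colon \cal V(\bvec \sigma) \to \cal M(X)$ is a linear bijection. Consequently $\zeta_X = pr_0 \circ \frak m_{\tiny \bvec \sigma}^{-1}$ is a composition of linear bijections, hence a bijection $\cal M(X) \to \cal C(X)$, which is precisely the asserted co-restriction. The ``in particular'' clause is then immediate: injectivity of $\zeta_X$ says exactly that two invariant measures $\mu_1, \mu_2$ on $X$ coincide whenever $\mu_1([a_k]) = \mu_2([a_k])$ for all the finitely many letters $a_k \in \cal A$.
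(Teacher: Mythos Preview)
Your proof is correct and follows essentially the same route as the paper's: both hinge on the observation that thinness ($c_n = c_0$ for all $n$) forces each linear map $M(\sigma_{[0,n)})$ to be injective on the span of $\cal C_n$, and both combine this with the factorization $pr_0 = \zeta_X \circ \frak m_{\tiny \bvec \sigma}$ from Proposition~\ref{2.4d}(1). The only difference is packaging: the paper argues the injectivity of $\zeta_X$ directly by contrapositive (if $\mu \neq \mu'$ then their vector towers differ at some level $n$, hence at level~$0$), whereas you first isolate the bijectivity of $pr_0$ and obtain the bijectivity of $\frak m_{\tiny \bvec \sigma}$ as an explicit intermediate step before composing.
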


This statement 
can be derived directly from
Proposition 
10.2 (1) 
and Corollary 10.4 of \cite{BHL1}. 
For convenience of the reader we 
give here a proof in 
the terminology introduced above.

\begin{proof} [Proof of Proposition \ref{6.1d}]
For any two measure $\mu, \mu' \in \cal M(X)$ there exist, by Theorem \ref{thm1}, vector towers $\bvec v = (\vec v_n)_{n \geq 0}$ and $\bvec v' = (\vec v\, '_{\! n})_{n \geq 0}$ on $\bvec \sigma$ with $\frak m_{\tiny \bvec \sigma}(\bvec v) = \mu$ and $\frak m_{\tiny \bvec \sigma}(\bvec v') = \mu'$. Thus $\mu \neq \mu'$ implies $\bvec v \neq \bvec v'$ and hence $\vec v_n \neq \vec v\, '_{\! n}$ for some $n \geq 0$. But then we deduce from (\ref{eq6.0e}) and the hypothesis that $\dim \cal C(X_n) = c_n = c_0 = \dim \cal C(X)$ that $\vec v_0 = M(\sigma_{[0, n)})(\vec v_n) \neq M(\sigma_{[0, n)})(\vec v\, '_{\! n}) = \vec v\, '_{\! 0}$. From Proposition \ref{2.4d} (1) we know that $\vec v_0 = pr_0(\bvec v) = \zeta_X(\mu)$ and $\vec v\, '_{\! 0} = pr_0(\bvec v') = \zeta_X(\mu')$, which shows that the map $\zeta_X$ is injective. For the linearity of $\zeta_X$ and the equality $\zeta_X(\cal M(X)) = \cal C(X)$ see (\ref{eq2.5e}) and (\ref{eq2.3.3}).
\end{proof}

Directive sequences of 
finite 
alphabet rank 
occur naturally in many important contexts 
in the symbolic dynamics literature 
(e.g. substitutive subshifts, IETs, etc). Furthermore, the extra invertibility condition from the following proposition is rather frequently 
satisfied.

\begin{cor}
\label{6.2d}
Let $X \subset \cal A^\Z$ be a subshift generated by an everywhere growing directive sequence $\bvec \sigma = (\sigma_n)_{n \geq 0}$ of finite alphabet rank. 
Assume that 
for every $n \geq 0$ the incidence matrix $M(\sigma_n)$ is invertible over $\R$. 
Then any invariant measure $\mu$ on the subshift $X$ 
is determined by the evaluation of $\mu$ on the letter cylinders, i.e. by the values $\mu([a_k])$ for all $a_k \in \cal A$.
\end {cor}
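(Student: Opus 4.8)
The plan is to reduce the statement to Proposition \ref{6.1d} by showing that the invertibility hypothesis forces the directive sequence $\bvec \sigma$ to be \emph{thin}, i.e.\ that its critical level is $n_0 = 0$; the desired conclusion is then precisely the injectivity of $\zeta_X$ asserted there.

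First I would record that an incidence matrix which is invertible over $\R$ is in particular square, so that $\card(\cal A_n) = \card(\cal A_{n+1})$ for every $n \geq 0$. Thus all level alphabets share a common cardinality $d$, and the map $M(\sigma_n): \R^{\cal A_{n+1}} \to \R^{\cal A_n}$ is a linear isomorphism. (This is compatible with, and subsumes, the finite alphabet rank hypothesis, which here reads $\liminf(\card\, \cal A_n) = d < \infty$.)

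Next, applying the identity (\ref{eq6.0e}), namely $\cal C_n = M(\sigma_n)(\cal C_{n+1})$, I would observe that $\cal C_n$ is the image of the cone $\cal C_{n+1}$ under the linear isomorphism $M(\sigma_n)$. Since an isomorphism carries the linear span of a cone bijectively onto the linear span of its image, this yields $c_n = \dim \cal C_n = \dim \cal C_{n+1} = c_{n+1}$ for every $n \geq 0$, and hence $c_n = c_0$ for all $n \geq 0$. By the characterisation of the critical level in (\ref{eq6.2e}) this identifies the critical level of $\bvec \sigma$ with the base level $n_0 = 0$ (the condition $c_n < c_{n_0}$ for $n < n_0$ being vacuous), so that $\bvec \sigma$ is thin.

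Finally I would invoke Proposition \ref{6.1d}: for a subshift generated by a thin everywhere growing directive sequence the letter frequency map $\zeta_X: \cal M(X) \to \R_{\geq 0}^{\cal A}$ co-restricts to a linear bijection onto $\cal C(X)$, and is in particular injective. Hence any invariant measure $\mu \in \cal M(X)$ is determined by the values $\mu([a_k])$ for all $a_k \in \cal A$, which is the asserted conclusion. There is no substantial obstacle here: the only point requiring a little care is checking that $c_n = c_0$ for all $n$ really locates the critical level at $0$, and this is immediate from the monotonicity $c_n \leq c_{n+1}$ together with (\ref{eq6.2e}); the entire mathematical content of the corollary is packaged in Proposition \ref{6.1d}.
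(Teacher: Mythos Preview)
Your argument is correct and follows essentially the same route as the paper: use the invertibility of each $M(\sigma_n)$ together with (\ref{eq6.0e}) to conclude $c_{n+1}=c_n$ for all $n$, deduce that $\bvec\sigma$ is thin, and then apply Proposition~\ref{6.1d}. Your added remark that invertibility forces the matrices to be square (hence all level alphabets have the same cardinality) is a reasonable clarification but not needed for the argument.
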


\begin{proof}
From (\ref{eq6.0e}) and the hypothesis that $M(\sigma_n)$ is invertible it follows directly 
that $c_{n+1}= c_n$ for all $n \geq 0$, so that 
the directive sequence $\bvec \sigma$ is thin. Hence the 
hypotheses of Proposition \ref{6.1d} are satisfied, which gives directly the claimed statement.
\end{proof}

Note that the conclusion of Corollary \ref{6.2d} has recently been proved by Berth\'e et al. 
under 
somewhat 
more restrictive hypotheses 
(see Corollary 4.2 of \cite{B+Co}); in particular it is required there that every $M(\sigma_n)$ has determinant equal to $1$ or to $-1$, and that $X$ is minimal.

\begin{rem}
\label{5.4.n}
(1)
If in Proposition \ref{6.1d} the hypothesis ``thin'' is replaced by ``thinning'', with critical level $n_0 \geq 1$, then the conclusion, that any two distinct measures $\mu \neq \mu' \in \cal M(X)$ can be distinguished by the evaluation on the letter cylinders $[a_k]$ for all $a_k \in \cal A$, may in some cases still hold, despite the fact that from the definition of the critical level we have 
$$\dim \cal C_0 = c_0 < c_{n_0} = \dim \cal C_{n_0} = \dim \cal M(X_{n_0})\, .$$
Here the last equality follows from Proposition \ref{6.1d}, applied to the directive sequence truncated at the critical level $n_0$.
The reason, why the above strict inequality doesn't contradict the presumed equality $c_0 = \dim \cal C_0 = \dim \cal M(X)$, is that the measure transfer map $\sigma_{[0, n_0)}^\cal M: \cal M(X_{n_0}) \to \cal M(X)$ may well not be injective, in the case that the telescoped level map $\sigma_{[0, n_0)}$ is not recognizable in the level subshift $X_{n_0}\,$. 

However, if $\bvec \sigma$ is totally recognizable, 
or if at least $\sigma_{[0, n_0)}$ is recognizable in $X_{n_0}$, and if furthermore $\bvec \sigma$ is thinning but not thin, then the conclusion of Proposition \ref{6.1d} necessarily fails: this case is treated in Example \ref{6.3d} below.

\smallskip
\noindent
(2)
In view of the fact that the measure transfer map $\sigma^\cal M$ induced by a non-recognizable monoid morphisms $\sigma$ is in general far from being injective, it seems noteworthy that in Proposition \ref{6.1d} and Corollary \ref{6.2d} no recognizability condition on the level maps $\sigma_n$ is imposed. One should recall in this context that in \cite{BSTY19}, Theorem 5.2, it has been proved that directive sequences of bounded alphabet rank, with aperiodic level subshifts, are ``eventually recognizable'', i.e. all level maps above some ``other critical level'' must be recognizable in their level subshift. But this ``other critical level'' may well be a lot bigger than the above critical level $n_0$, and indeed we give in 
Corollary \ref{2-interesting-sequences} (2) examples of thin directive sequences where this ``other critical level'' can be chosen to be arbitrarily high up, while none of the level morphisms below it is recognizable in its corresponding level subshift
(which is aperiodic for any level).
\end{rem}

\smallskip

We 
now 
present the promised ``detailed example with all above data made visible'':

\begin{example}
\label{6.3d}
The subshift $X$ in this example consists of two periodic words and is hence all by itself not so interesting. We chose it in order to give a 
transparent 
presentation of 
a simple subshift 
via some not so obvious everywhere growing directive sequence, which we now describe in detail. We first describe the level $n = 1$, then pass to the base level $n = 0$, and finally built the higher levels $n \geq 2$ on top of the two lowest levels. We also include for each level $n$ a description of the measure cone $\cal M(X_n)$ and of the associated letter frequency cone $\cal C_n$.

\smallskip

Set $\cal A_1 = \{a, b\}$, and let $X_1 \subset \cal A_1^\Z$ be the union of the two periodic subshifts 
$\cal O(w^{\pm \infty})$ and $\cal O({w'}^{\pm \infty})$, 
defined by the words $w = a^2 b$ and $w' = b^2 a$. We consider the two characteristic measures 
$\mu := \mu_{w}$ and $\mu' := \mu_{w'}\,$, 
and observe that $\cal M(X_1)$ consists of all non-negative linear combinations of these two measures. The letter frequency map $\zeta_{X_1}: \cal M(X_1) \to \cal C(X_1) \subset \R_{\geq 0}^{\{a, b\}}$ is injective, in that $\zeta_{X_1}(\mu) = 2 \vec e_a + \vec e_b$ and $\zeta_{X_1}(\mu') = \vec e_a + 2 \vec e_b$. This results into $c_1 = \dim(\cal C_1) = 2$.

\smallskip

For $\cal A_0 = \{c, d\}$ consider now the ``Thue-Morse'' morphism $\sigma_0
: \cal A_1^\Z \to \cal A_0^\Z\, , \,\, a \mapsto cd\, , \,\, b \mapsto dc$, and recall (see Proposition \ref{2.2.3} (d)) 
that $\sigma_0^\cal M(\mu) = \mu_{\sigma_0(w)}$ and  $\sigma_0^\cal M(\mu') = \mu_{\sigma_0(w')}\,$, with $\sigma_0(w) = cdcddc$ and $\sigma_0(w') = dcdccd$. 
Since $cdcddc$ and $dcdccd$ 
can not be obtained from each other by a cyclic permutation, we have $\cal O((cdcddc)^{\pm \infty}) \neq \cal O((dcdccd)^{\pm \infty})$, so that from (\ref{last-minute}) it follows that $\supp(\mu_{cdcddc}) \neq \supp(\mu_{dcdccd})$. We thus deduce 
for the image subshift $X_0 = \sigma_0(X_1)$ that the measure cone $\cal M(X_0)$, which is spanned by $\mu_{cdcddc}$ and $\mu_{dcdccd}\,$, is of dimension~$2$.

On the other hand, 
using Proposition \ref{2.8.formula} (or more directly, equality (\ref{eq:charac-m}))
we readily compute $\mu_{cdcddc}([cd]) = \mu_{cdcddc}([dc]) = 
2$ as well as $\mu_{dcdccd}([cd]) = \mu_{dcdccd}([dc]) = 
2$.
It follows that the frequency map $\zeta = \zeta_{X_0}$ is not injective, and that 
$\cal C_0$ has dimension $c_0 = 1$.

\smallskip

We now define the higher up levels of the directive sequence by setting $\cal A_n = \{x, y\}$ for any $n \geq 2$, and by defining all level morphisms $\sigma_n: \cal A_{n+1} \to \cal A_n$ for $n \geq 2$ to be equal to the substitution defined by $x \mapsto x^2$, $y \mapsto y^2$. It follows that 
for $n \geq 2$ all level subshifts $X_n$ consist of the two biinfinite periodic words $x^{\pm \infty}$ and $y^{\pm \infty}$. 
Moreover, we easily see that the incidence matrix of $\sigma_n$ is equal to 2 times the 2-by-2 identity matrix, $M(\sigma_n) = 2 \cdot I_2$, so that we have $\cal M(X_n) = \cal C_n = \R_{\geq 0}^{\{x, y\}}$.

\smallskip

It remains now to define $\sigma_1: \cal A_2 \to \cal A_1$ via $x \mapsto w$, $y \mapsto w'$, 
which ensures $\sigma_1(X_2) = X_1$, in order 
to obtain a directive sequence $\bvec \sigma = (\sigma_n)_{n \geq 0}$ over alphabets that all have cardinality 2. We have shown above that the critical level of this directive sequence is $n_0 = 1$, while the 
evaluation on the 
cylinders $[\sigma_0(a)] = [cd]$ and  $[\sigma_0(b)] = [dc]$ 
does not suffice to distinguish the two measures 
$\mu_{cdcddc} \neq \mu_{dcdccd}$
that span $\cal M(X_0)$.
\end{example}

\section{Minimal subshifts with zero entropy and infinitely many ergodic probability measures}
\label{sec:Cyr-Kra}

A subshift $X$, which is ``small'' in that it has topological entropy  $h_X = 0$
(see (\ref{eq2.ent})), and simultaneously ``large'' in that the number $e(X)$ of ergodic probability measures carried by $X$ is infinite, is a bit of a contradiction in itself
(if one restricts to non-atomic measures). However, such subshifts are known to exist, but they are not easy to come by. 
One of the first such subshift known to us is the Pascal-adic subshift, treated in \cite{MP}; more recent such examples (with additional strong properties,
in particular minimality) are exhibited in \cite{CK2}. Not surprisingly, there is always a certain amount of labor involved 
in order to 
simultaneously 
achieve the above two 
opposite properties.

\smallskip

In this section we will 
present 
an alternative way to construct 
minimal subshifts $X$ which satisfy both,  $h_X = 0$ and $e(X) = \infty$. 
The main purpose of this section is to underline  
how directly such examples can be 
exhibited 
by means of the technology established in the previous sections.

\smallskip

We first 
recall 
two known results. The first appears as Theorem 4.3 in \cite{BD} and is attributed there to Thierry Monteil; alternatively it can be found in \cite{CANT} as Lemma 6.7.1 of Chapter 6, written by Fabien Durand, who told us that 
the result can actually be traced back to the paper \cite{BH} by Boyle-Handelman. 

\begin{prop}
\label{Monteil-bound}
Let $X$ be 
a subshift 
which is generated by a directive sequence $\bvec \sigma = (\sigma_n)_{n \geq 0}$ 
with level alphabets $\cal A_n$. Then, for the minimal level letter image length $\beta_-(n)$ from (\ref{eq2.9e}), one has:
$$h_X \leq \inf_{n \geq 0} \frac{\log (\text{\rm \card} \,\cal A_n)}{\beta_-(n)}$$
\end{prop}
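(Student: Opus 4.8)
The plan is to bound the factor–complexity function $p_X(\ell)$ directly, for each fixed level $n$, and then to let $\ell \to \infty$ in the defining limit (\ref{eq2.ent}) of $h_X$. The whole point is that a factor of $X$ of length $\ell$ cannot meet more than roughly $\ell/\beta_-(n)$ of the level-$n$ image blocks $\sigma_{[0,n)}(a_j)$, and is therefore cheaply encoded by a short word over the small alphabet $\cal A_n$ together with a bounded offset.

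First I would record the structural input. Writing $\sigma_{[0,n)}=\sigma_0\circ\cdots\circ\sigma_{n-1}:\cal A_n^*\to\cal A_0^*$ for the telescoped morphism, iterating (\ref{eq2.11.5e}) together with Lemma \ref{2.2e} gives that $X=X_0$ is generated by the language $\sigma_{[0,n)}(\cal L(X_n))$. By Remark \ref{image-shift}(3) this means precisely that every $w\in\cal L(X)$ is a factor of some $\sigma_{[0,n)}(u)$ with $u\in\cal L(X_n)$. Among all such $u$ I would pick one of minimal length, so that $w$ occurs \emph{essentially} in $\sigma_{[0,n)}(u)$: the first letter of $w$ lies in the block $\sigma_{[0,n)}(u_1)$ and its last letter lies in the block $\sigma_{[0,n)}(u_{|u|})$.

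The key counting step is then as follows. Set $\beta=\beta_-(n)\ge 1$ and $d=\card\cal A_n$, and write $u=u_1\cdots u_s$. The $s-2$ interior blocks $\sigma_{[0,n)}(u_2),\ldots,\sigma_{[0,n)}(u_{s-1})$ are entirely contained in $w$ and each has length $\ge\beta$, whence $\ell=|w|\ge(s-2)\beta$ and so $s\le \ell/\beta+2$. Moreover $w$ is completely determined by the pair $(u,t)$, where $t$ with $0\le t<|\sigma_{[0,n)}(u_1)|\le \beta_+(n):=\max_{a\in\cal A_n}|\sigma_{[0,n)}(a)|$ is the offset of the first letter of $w$ inside $\sigma_{[0,n)}(u_1)$; indeed $w$ is just the length-$\ell$ word read off $\sigma_{[0,n)}(u)$ starting at position $t$. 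Counting admissible pairs and bounding the number of $u\in\cal L(X_n)$ of length $s$ by $d^{\,s}$ yields
$$p_X(\ell)\,\le\,\beta_+(n)\sum_{s=1}^{\lfloor \ell/\beta\rfloor+2} d^{\,s}\,\le\,\beta_+(n)\,\bigl(\tfrac{\ell}{\beta}+2\bigr)\,d^{\,\ell/\beta+2}.$$
Taking logarithms, dividing by $\ell$, and noting that $\log\beta_+(n)$ and the polynomial factor contribute $o(1)$, I obtain $\tfrac{1}{\ell}\log p_X(\ell)\le \tfrac{\log d}{\beta}+o(1)$, so that the limit (\ref{eq2.ent}) satisfies $h_X\le \tfrac{\log\card\cal A_n}{\beta_-(n)}$. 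Since $n$ was arbitrary, taking the infimum over $n$ finishes the proof.

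The main obstacle I anticipate is purely the bookkeeping of the counting step: one must ensure that every $w$ is counted (choosing the minimal carrier $u$ guarantees this), that the offset range is bounded by a level-dependent constant independent of $\ell$, and that the interior-block inequality $\ell\ge(s-2)\beta$ is applied with the correct edge cases $s\in\{1,2\}$. Everything else — the vanishing of the constant and polynomial terms after division by $\ell$, and the passage to the infimum — is routine. A minor but essential point to state explicitly is that $\beta_+(n)$ is finite because $\cal A_n$ is finite, which is exactly what makes the leading ``constant'' genuinely independent of $\ell$.
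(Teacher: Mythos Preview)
Your proof is correct. Note, however, that the paper does not actually prove Proposition~\ref{Monteil-bound}: it is quoted as a known result, attributed to Monteil and referenced to \cite{BD} (Theorem~4.3) and \cite{CANT} (Lemma~6.7.1). Your argument---encode each length-$\ell$ factor of $X$ by a short word over $\cal A_n$ of length at most $\ell/\beta_-(n)+2$ together with an offset bounded by $\beta_+(n)$, then count pairs---is precisely the standard proof given in those sources, so there is nothing to compare.
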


\begin{prop}[Section 4.1 of \cite{BHL2}]
\label{5.3d}
For any integer $d \geq 2$ 
let $X$ be 
a subshift 
which is generated by a 
directive sequence 
${\bvec \sigma} = (\sigma_{n,d})_{n \geq 0}$ 
with level alphabets 
that are all 
of 
uniform cardinality $d$
(and are thus identified with $\cal A_{(d)} = \{a_1, \ldots, a_d\}$). Assume that for any level $n \geq 0$ the incidence matrix of the 
level map $\sigma_{n,d}$ is given by
\begin{equation}
\label{eq5.1e}
M(\sigma_{n,d}) = M_{\ell(n), d} :=  \ell(n) I_d + 1_{d \times d}\, ,
\end{equation}
where $I_d$ is the identity matrix of size $d \times d$, $1_{d \times d}$ is the $d \times d$-matrix with all entries equal to 1, and $\ell(n)$ is a positive integer depending on $n$. 

Then $X$ is minimal, and for any sufficiently fast growing sequence $(\ell(n))_{n \in \N}$ the subshift $X$ admits precisely $d$ distinct invariant ergodic probability measures.
\end{prop}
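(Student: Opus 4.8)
The plan is to pin down $e(X)$ by squeezing it between a general upper bound and an explicit lower bound coming from the geometry of the nested cones $M(\sigma_{[0,n)})(\R_{\geq 0}^{\cal A_{(d)}})$; minimality comes essentially for free. First I would note that each incidence matrix $M_{\ell(n),d} = \ell(n)I_d + 1_{d\times d}$ is strictly positive, so the positivity criterion of Remark \ref{2.2d}(3) holds at every level with $m = n+1$; hence $\bvec\sigma$ is everywhere growing and, by Remark \ref{2.2d}(4), $X$ is minimal. Since all level alphabets have cardinality $d$, Remark \ref{5.0.n}(3) immediately yields the upper bound $e(X) \leq \liminf_n \card(\cal A_n) = d$.

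For the matching lower bound, by Proposition \ref{2.4d}(4) it suffices to prove $\dim \cal C(X) = d$, and Proposition \ref{2.4d}(3) expresses this cone as a nested intersection
\[\cal C(X) = \bigcap_{n \geq 1} P_n(\R_{\geq 0}^{\cal A_{(d)}}), \qquad P_n := M(\sigma_{[0,n)}) = \prod_{k=0}^{n-1} M_{\ell(k),d}.\]
The key structural observation is that all the matrices $M_{\ell,d}$ commute and are simultaneously diagonalized by the splitting $\R^{\cal A_{(d)}} = \R\mathbf{1} \oplus H_0$, where $\mathbf{1} = (1,\dots,1)$ and $H_0 = \{v : \sum_i v_i = 0\}$: the eigenvalue on $\R\mathbf{1}$ is $\ell + d$ and on $H_0$ it is $\ell$, with multiplicity $d-1$. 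Since $\mathbf{1}$ is also a common left eigenvector ($\mathbf{1}^\top M_{\ell,d} = (\ell+d)\mathbf{1}^\top$), one has $\sum_i (P_n v)_i = \lambda_n \sum_i v_i$ with $\lambda_n = \prod_{k<n}(\ell(k)+d)$.

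The main work — and the step I expect to be the genuine obstacle — is the dimension count for this intersection. I would normalize $Q_n := \lambda_n^{-1} P_n$, which preserves the simplex $\Delta = \{v \geq 0 : \sum_i v_i = 1\}$, fixes its barycenter $b = \frac{1}{d}\mathbf{1}$, and contracts the $H_0$-directions by the scalar $\rho_n = \prod_{k<n} \tfrac{\ell(k)}{\ell(k)+d}$. Thus $P_n(\R_{\geq 0}^{\cal A_{(d)}})$ is exactly the cone over the sub-simplex $Q_n(\Delta)$ with vertices $b + \rho_n(e_i - b)$, these sub-simplices are nested homothetic copies shrinking toward $b$, and $\cal C(X)$ is the cone over $\bigcap_n Q_n(\Delta) = b + \rho_\infty(\Delta - b)$, where $\rho_\infty = \lim_n \rho_n = \prod_{k \geq 0}(1 + d/\ell(k))^{-1}$. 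This infinite product is positive precisely when $\sum_k 1/\ell(k) < \infty$, which is exactly what ``sufficiently fast growing'' guarantees (e.g. $\ell(k) = 2^k$). In that regime the limit simplex is genuinely $(d-1)$-dimensional, so $\dim \cal C(X) = d$.

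Finally I would assemble the two bounds: $d = \dim \cal C(X) \leq e(X) \leq d$, forcing $e(X) = d$. The whole argument hinges on reducing the a priori opaque intersection of nested cones to the single scalar sequence $\rho_n$; once the commuting diagonalization is in place, the convergence dichotomy for $\sum_k 1/\ell(k)$ transparently separates the uniquely ergodic case ($\rho_\infty = 0$) from the full $d$-dimensional case ($\rho_\infty > 0$).
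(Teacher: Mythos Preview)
Your argument is correct. Note first that the present paper does not actually prove Proposition~\ref{5.3d}: it is quoted from \cite{BHL2}, and the authors explicitly say it ``will not be formally used in the sequel'', pointing instead to the proof of Theorem~\ref{9.2x} as using ``the very same basic idea''. So the right comparison is with that proof and with the computation in Section~4 of \cite{BHL2}.

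The underlying decomposition is the same: both you and the paper exploit that the matrices $M_{\ell,d}$ preserve the splitting into the ``center'' direction $\R\mathbf{1}$ and its complement, and that the ratio of the two eigenvalues $\ell/(\ell+d)$ can be kept bounded away from zero by choosing the $\ell(n)$ large. The paper's version (in the proof of Theorem~\ref{9.2x}) phrases this in terms of vector towers: one seeks, for each $i$, a tower whose level-$n$ vector has the form $\lambda_{1,n}\vec e_i + \lambda_{2,n}\vec c_n$ with both coefficients strictly positive, and argues that for $\ell(n)$ large enough such towers exist and have linearly independent bottom vectors. Your version is a cleaner spectral realization of the same idea: by simultaneously diagonalizing all the $M_{\ell,d}$ you compute the nested intersection $\bigcap_n M(\sigma_{[0,n)})(\R_{\geq 0}^{\cal A_{(d)}})$ explicitly as the cone over $b + \rho_\infty(\Delta - b)$, and you extract the sharp condition $\sum_k 1/\ell(k) < \infty$ for $\rho_\infty > 0$, rather than merely ``sufficiently fast growing''. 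The paper's vector-tower formulation generalizes more readily to the non-commuting situation of Theorem~\ref{9.2x} (where the $\tau_d$ are interleaved), whereas your diagonalization argument, while more transparent here, relies on the commutativity of the $M_{\ell,d}$.
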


The use of Proposition \ref{Monteil-bound} will be an ingredient in our proof below.
Proposition \ref{5.3d}, on the other hand, will not be formally used in the sequel, 
but it may pay anyway for the reader to look it up: We use below the very same basic idea as in this earlier result, but do not carry out all calculations as had been done in section 4 of \cite{BHL2} (where in particular precise lower bounds for the 
integers 
$\ell(n)$ are computed 
which 
guarantee the ``sufficiently fast growing'' in the above statement).

\medskip

For the proof below we first need to define 
for any integer $d \geq 2$ 
and 
alphabet 
$\cal A_{(d)} = \{a_1, \ldots, a_d\}$ 
the 
morphism $\tau_{d}: \cal A_{({d+1})}^* \to \cal A_{(d)}^*\,$, given by $a_i \mapsto a_i^2$ for any $a_i$ with $1 \leq i \leq d$ and $a_{d+1} \mapsto a_1 a_2\ldots a_d$. 

\begin{rem}
\label{recog}
(1)
For the morphism $\tau_d$ 
as given above it 
is easy to see that any biinfinite word 
${\bf y} \in \cal A_{(d)}^\Z \smallsetminus \{a_1^{\pm \infty}, a_2^{\pm \infty}, \ldots, a_d^{\pm \infty}\}$ 
can be 
``desubstituted'' in 
at most 
one way 
(compare Remark 
6.2 (2) of \cite{PartI}) 
to give a biinfinite word ${\bf x} \in \cal A_{({d+1})}^\Z$ with 
$\tau_d(\cal O({\bf x})) = \cal O({\bf y})$. 
Since for any $i = 1, \ldots, n$ the periodic word $a_i^{\pm\infty}$ is the only element ${\bf x} \in \cal A_{(d+1)}^\Z$ with $\tau_d(\cal O({\bf x})) = \cal O(a_i^{\pm\infty})$, it follows
that $\tau_d$ is 
recognizable in every subshift which doesn't contain any of the periodic words $a_i^{\pm \infty}$.

\smallskip
\noindent
(2)
Again 
by elementary desubstitution arguments one verifies quickly 
that any morphism $\sigma_{n,d}$ with incidence matrix given by equality (\ref{eq5.1e}), 
with $\ell(n) \geq 2$, 
is 
recognizable 
in the full shift $\cal A_{(d)}^\Z$.

[Indeed, it suffices to check in any biinfinite word ${\bf y} 
\in \sigma_{n,d}(\cal A_{(d)}^\Z)$ for a factor $w
\in \cal A_{(d)}^*$ which is ``distinguished'' in that some letter $a_i \in \cal A_{(d)}$ occurs precisely 3 times in $w$, while all other letters $a_j \in \cal A_{(d)}$ occur at most twice. 
Such a distinguished word $w$ occurs in 
$\sigma_{n,d}(a_i)$, 
and any such occurrence is contained in the image of some word from $\cal A_{(d)}^*$ of length at most 3. In either case one verifies quickly that the middle occurrence of $a_i$ in $w$ must belong to $\sigma_{n,d}(a_i)$. For this middle occurrence $y_s$ in the factor $w = y_r \ldots y_t$ of ${\bf y}  = \ldots y_{n-1} y_{n} y_{n+1} \ldots$ one considers the factors $w_+ = y_s \ldots y_{t'}$ and $w_- = y_{r'} \ldots y_s$ of ${\bf y}$, with $y_s = y_{s+1} = \ldots = y_{t'-1} = a_i$ and $y_{t'} \neq a_i$, and similarly $y_s = y_{s-1} = \ldots = y_{r'+1} = a_i$ and $y_{r'} \neq a_i$. From the fact that $\sigma_{n,d}(a_i)$ contains each letter $a_j \neq a_i$ precisely once one deduces directly that the words $w_+$ and $w_-$ determine which occurrence of $a_i$ in $\sigma_{n,d}(a_i)$ is given by the letter $y_s$. It follows that, starting from $y_s\,$, the biininite word ${\bf y}$ can be desubstituted in precisely one way.]

\smallskip
\noindent
(3)
From the conditions on the incidence matrix $M(\sigma_n)$ in (\ref{eq5.1e}) it follows directly that every word in $\sigma_{n, d}(\cal A_{(d)}^*)$ must contain each of the letters of $\cal A_{(d)}$. 
Hence we observe 
that $\sigma_{n, d}(\cal A_{(d)}^\Z)$ can not contain any of the periodic words $a_i^{\pm \infty}$.

\smallskip
\noindent
(4)
As a consequence of the above observations (1) - (3) we deduce for the alternating directive sequence
\begin{equation}
\label{eq5.1d}
\bvec \sigma = 
\sigma_{2} \circ \tau_2 \circ \sigma_{3} \circ \tau_3 \circ \ldots \, ,
\end{equation}
where we set $\sigma_d := \sigma_{d, d}\,$, that each level map is recognizable in its corresponding level subshift, so that the sequence $\bvec \sigma$ is fully recognizable.
\end{rem}

\begin{thm}
\label{9.2x}
For any integer $d \geq 2$ let 
$\cal A_{(d)} = \{a_1, \ldots, a_d\}$ and let 
$\sigma_d: \cal A_{(d)}^* \to \cal A_{(d)}^*$ be a morphism with incidence matrix 
$M(\sigma_d) = M_{\ell(d), d}$
from (\ref{eq5.1e}), for some integer 
$\ell(d) \geq 2$
depending on $d$. 
Let $X$ be the subshift generated by the alternating directive sequence 
$\bvec \sigma$ given in (\ref{eq5.1d}).

If the exponent sequence $(\ell(n))_{n \in \N}$ is sufficiently fast growing, then the subshift $X$ is minimal, has entropy $h_X = 0$ and admits infinitely many distinct invariant ergodic probability measures.

(We denote by $\frak X$ the class of all subshifts $X \subset \cal A_{(d)}^\Z$ which satisfy all of the above conditions.)
\end{thm}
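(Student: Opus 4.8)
The plan is to establish the three assertions—minimality, $h_X=0$, and the existence of infinitely many ergodic probability measures—separately, the last one carrying essentially all of the difficulty.

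\emph{Minimality.} First I would verify the criterion $(\frak Y)$ of Remark \ref{2.2d}. Given any level $n$, let $2j_0\ge n$ be the smallest even level; the level map there is $\sigma_{j_0+2,\,j_0+2}$, whose incidence matrix $M_{\ell(j_0+2),\,j_0+2}=\ell(j_0+2)I_d+1_{d\times d}$ is strictly positive. Telescoping gives $M(\sigma_{[n,\,2j_0+1)})=M(\sigma_n)\cdots M(\sigma_{2j_0-1})\cdot M(\sigma_{2j_0})$; every factor $M(\tau_d)$ and $M(\sigma_{m,d})$ is non-negative with no zero row, hence so is the product $M(\sigma_n)\cdots M(\sigma_{2j_0-1})$, and multiplying such a matrix on the right by the positive matrix $M(\sigma_{2j_0})$ yields a positive matrix. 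Thus $(\frak Y)$ holds, so by Remark \ref{2.2d}(3),(4) the sequence $\bvec\sigma$ is everywhere growing and $X$ is minimal.

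\emph{Zero entropy.} Here I would apply Proposition \ref{Monteil-bound}. Since $\card\cal A_n=\lfloor n/2\rfloor+2$, the numerator $\log\card\cal A_n$ grows only like $\log n$. Each level map has minimal letter-image length at least $2$ (for $\tau_d$ the image $a_i\mapsto a_i^2$; for $\sigma_{m,d}$ the column sums of $M_{\ell,d}$ equal $\ell+d\ge 4$), and minimal image length is submultiplicative under composition, so $\beta_-(n)\ge 2^n$ in the notation of (\ref{eq2.9e}). Hence $\frac{\log\card\cal A_n}{\beta_-(n)}\to 0$, and Proposition \ref{Monteil-bound} gives $h_X=0$.

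\emph{Infinitely many ergodic measures.} By Remark \ref{recog}(4) the sequence $\bvec\sigma$ is totally recognizable, so Remark \ref{5.0.n}(4) yields $e(X)=\lim_n c_n$ with $c_n=\dim\cal C_n$. As $(c_n)$ is non-decreasing it suffices to prove $c_{2k}\to\infty$, and I would in fact show $c_{2k}=k+2$ (i.e. $\cal C_{2k}$ is full-dimensional) for every $k$. Using $\cal C_{2k}=\bigcap_{m>2k}M(\sigma_{[2k,m)})(\R_{\geq 0}^{\cal A_m})$ I would track the image of the positive orthant through the descending maps. Writing the full-dimensional sub-cone around the barycenter ray as $C_d(\epsilon)=\{z\in\R_{\geq 0}^{d}:z_i\ge\epsilon\sum_j z_j\}$, a direct computation shows that $M(\sigma_{m,d})$ sends $C_d(\epsilon)$ onto $C_d\!\big(\tfrac{\ell(d)\epsilon+1}{\ell(d)+d}\big)$, while $M(\tau_d)$ sends $C_{d+1}(\epsilon)$ onto a cone containing $C_d\!\big(\tfrac{3\epsilon}{2+(d-2)\epsilon}\big)$. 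In the normalized coordinate $\theta=d\epsilon\in[0,1]$ (so $\theta=1$ is the barycenter and $\theta<1$ means full-dimensional), these two actions read
$$\theta\longmapsto\frac{\ell(d)\,\theta+d}{\ell(d)+d}\qquad\text{and}\qquad\theta\longmapsto\frac{3d\,\theta}{2d+2+(d-2)\theta}.$$
Starting from $\theta=0$ at level $2K$ and iterating down to level $2k$, one obtains a bound of the form $\theta^{(i)}\le\tfrac32\,\theta^{(i-1)}+\tfrac{d_i}{\ell(d_i)}$, whence $\theta_{2k}\le\sum_{d\ge k+2}(\tfrac32)^{d-k-2}\tfrac{d}{\ell(d)}$ uniformly in $K$. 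If the $\ell(d)$ grow fast enough that $\sum_d(\tfrac32)^d\tfrac{d}{\ell(d)}$ is finite and small, this bound stays below $1$ for every $k$, so $\cal C_{2k}$ contains a fixed full-dimensional cone $C_{k+2}(\epsilon)$; hence $c_{2k}=k+2\to\infty$ and $e(X)=\infty$.

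\emph{Main obstacle.} The delicate point is precisely this last quantitative estimate. Unlike the fixed-alphabet situation of \cite{BHL2}, where only the mild contraction factor $\tfrac{\ell}{\ell+d}\to 1$ of the $\sigma_{m,d}$ appears, here the dimension-increasing maps $\tau_d$ push the normalized floor $\theta$ toward the barycenter by a factor bounded away from $1$ (up to roughly $\tfrac32$). Keeping the limiting nested intersection full-dimensional therefore requires the growth of $\ell(d)$ to dominate this geometric amplification—this is exactly the meaning of ``sufficiently fast growing'' and the analogue of the computation carried out in Section 4 of \cite{BHL2}, which we do not reproduce in full here.
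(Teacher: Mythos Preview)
Your proof is correct and follows the same overall strategy as the paper. For minimality and entropy you argue exactly as the paper does (your observation that $\beta_-(n)\ge 2^n$ already forces $h_X=0$ without any extra growth hypothesis on $\ell(d)$ is in fact slightly sharper than the paper's ``for large $\ell(d)$'').

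For the infinitely many ergodic measures, both arguments rest on total recognizability together with the invariance of the incidence matrices under coordinate permutations; the only difference is in the packaging. The paper fixes a level $n_0=2d-2$ and, for each $i\le d$, builds an explicit vector tower with level vectors of the special form $\lambda_{1,n}\vec e_i+\lambda_{2,n}\vec c_n$, then uses Proposition~\ref{2.4d}(1) to obtain $d$ linearly independent measures on $X_{n_0}$ and pushes them down via recognizability. You instead track the whole symmetric cone $C_d(\epsilon)$ --- whose extreme rays are precisely the paper's special vectors --- through the nested-intersection description of $\cal C_{2k}$ from Proposition~\ref{2.4d}(3), and conclude via $e(X)=\lim c_n$ from Remark~\ref{5.0.n}(4). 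Your recursion in $\theta=d\epsilon$, with the $\tfrac32$ amplification coming from $M(\tau_d)$ and the additive $d/\ell(d)$ coming from $M(\sigma_d)$, makes the meaning of ``sufficiently fast growing'' more explicit than the paper, which appeals to the parallel computation in \cite{BHL2}.
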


\begin{proof}
For each 
integer $d \geq 2$ 
we identify the 
finite alphabet $\cal A_{(d)} = \{a_1, a_2, \ldots, a_d\}$ with the corresponding subset of an infinite alphabet, via $\cal A_{(2)} \subset \cal A_{(3)} \subset \ldots \subset \cal A_{(\infty)} = \{a_1, a_2, \ldots\}$. For the issuing infinite non-negative cone $\R_{\geq 0}^{\cal A_{(\infty)}}$ we abbreviate for notational convenience the base unit vectors to $\vec e_i := \vec e_{a_i}$.

For any level $n = 2d -2$ or $n = 2d - 1$ we consider the 
subcone $\cal C^n := \R_{\geq 0}^{\cal A_{(d)}} \subset \R_{\geq 0}^{\cal A_{(\infty)}}$, 
and in particular 
the ``center vector'' $\vec c_n = \sum \vec e_i$ of $\cal C^n$. We observe that both families, the morphisms $\sigma_d$ as well as the morphisms $\tau_d\,$, induce maps $M(\sigma_d): \cal C^n \to \cal C^n$ and $M(\tau_d): \cal C^{n+1} \to \cal C^n$ 
respectively 
which each 
maps 
the center vector $\vec c_{n}$ (for $\sigma_n$) or $\vec c_{n+1}$ (for $\tau_n$) to a scalar multiple of the center vector $\vec c_n\,$. 
Furthermore, any unit vector $\vec e_i$ with $1 \leq i \leq d$ 
is mapped by both, $M(\sigma_d)$ and $M(\tau_d)$, 
to a non-negative linear combination $\lambda_1 \vec e_i + \lambda_2 \vec c_d$. Note here that 
(again for both, $\sigma_d$ and $\tau_d$,) 
\begin{equation}
\label{eq5.x}
\text{the quotient $\frac{\lambda_2}{\lambda_1}$ can be made arbitrarily small}
\end{equation}
by choosing $\ell(d)$ sufficiently large.

We now fix some level $n_0 = 2d-2 \geq 0$, and for any index $i$ with  
$1 \leq i \leq d$ we look for a vector tower $\bvec v_{\! i} = (\vec v^{\, i}_{n})_{n \geq n_0}$ on the truncated directed sequence $
\bvec \sigma\chop_{\! n_0} = \sigma_{n_0} \circ \tau_{n_0} \circ \sigma_{n_0+1} \circ \tau_{n_0+1} \circ \ldots$ 
with the property that $\bvec v_{\! i}$ 
has for any level $n \geq n_0$ a level vector $\vec v^{\, i}_n = \lambda_{1, n} \vec e_i + \lambda_{2,n} \vec c_n$, with coefficients 
\begin{equation}
\label{eq5.3e}
\lambda_{1, n} > 0 \qquad \text{and}  \qquad \lambda_{2, n} > 0
\end{equation}
(which must  
both tend to 0 for $n \to \infty$).
From (\ref{eq5.x}) we deduce that 
a sufficiently large choice of the exponents $\ell(d)$ 
effects indeed that there exist families of such coefficients 
where 
both of the inequalities in (\ref{eq5.3e}) are satisfied, while the compatibility condition (\ref{eq2.3.4}) is maintained, for any $n \geq n_0$. It follows that  
on the lowest level $n = n_0$ (and thus similarly also on all 
levels 
$n \geq n_0$) the level vectors
$\vec v^{\, 1}_{n_0}, \vec v^{\, 2}_{n_0}, \ldots, \vec v^{\, d}_{n_0}$ are linearly independent.

For the level subshift 
$X_{n_0} \subset \cal A_{(d)}^\Z\,$, 
generated by the truncated sequence 
$\bvec \sigma\chop_{\! n_0}\,$, 
the 
truncated evaluation map 
$\frak m_{n_0} := \frak m_{\tiny \bvec \sigma\chop_{\! n_0}}\!\!\!\!: \bvec{\cal V}(\bvec \sigma\chop_{\! n_0}) \to \cal M(X_{n_0})$ 
from 
(\ref{eq4.1d}) 
defines $d$ invariant measures $\mu_1, \ldots, \mu_d$ 
on the level subshift $X_{n_0}$ 
as images of the $d$ vector towers $\bvec v_{\! 1}, \ldots, \bvec v_{\! d}$ respectively:
$$\mu_i \,\, = \,\, \frak m_{n_0}(\bvec v_{\! i})$$
It follows from Proposition \ref{2.4d} (1) that 
the subcone 
$$\cal M_{n_0}
:= {\R_{\geq 0}}\, \langle \mu_1, \ldots, \mu_d\rangle \subset 
\cal M(X_{n_0})$$
spanned by the $\mu_i$ has dimension $d$.
Since 
we verified 
in Remark \ref{recog} (4) above 
that each of the maps $\sigma_j$ and $\tau_j$ 
is 
recognizable in its corresponding level subshift, 
it follows from Theorem \ref{2.2.2} 
(3d) that $\cal M_{n_0}$ is mapped by $\sigma_2^\cal M \circ \tau_2^\cal M \circ \ldots \circ \sigma_{n_0 -1}^\cal M \circ \tau_{n_0 -1}^\cal M$ 
to a subcone of $\cal M(X)$ that also has dimension $d$.

We have thus proved that $\cal M(X)$ contains subcones of arbitrary large dimension, and hence must be infinite dimensional, 
i.e. $e(X) = \infty$. 
The desired equality $h_X =0$ is immediate from Proposition \ref{Monteil-bound} for large $\ell(d)$, and the minimality of $X$ follows 
directly from the positivity of the matrices $M(\sigma_d)$, see Remark \ref{2.2d} (4).
\end{proof}

\section{Non-recognizable directive sequences}
\label{sec:pseudo-A}

The purpose of this section is to show how non-recognizable morphisms appear naturally in a well known context (IETs and pseudo-Anosov surface homeomorphisms), and how this phenomenon can be exploited to construct interesting directive sequences that are not totally recognizable or even not eventually recognizable.

\smallskip

Our construction will be presented in 4 steps, organized below 
as follows: In subsection \ref{sec:8.1} we present our basic quotient construction in geometric language. In subsection \ref{sec:8.2} we show how the canonical ``inverse quotient construction'' is obtained in a natural geometric context, to define a non-recognizable monoid morphism. In subsection \ref{sec:8.3} the results from the previous subsections are properly ``pasted together'' to give a directive sequence where every level morphism is non-recognizable (and in addition it is a particularly nice letter-to-letter factor map). Finally, in subsection \ref{sec:8.4} we modify this sequence slightly to obtain the desired everywhere growing but not (eventually) recognizable directive sequences. Note that all intermediate level subshifts which occur in our constructions turn out to be minimal; they are furthermore both, substitutive and IET.

Before starting the detailed description, we will highlight its essential features in a special case, in a language that may be more easily accessible to those of us who are less familiar with Thurston's work on surface homeomorphisms:

\begin{rem}
\label{8.0d}
(1)
Let us consider the tiling of the real plane $\R^2$ by squares of side length 1 that have their vertices on the points with integer coordinates. We now pick a slope $s$, say $0 < s < 1$, and we foliate the plane by lines that have slope $s$. 
By choosing the slope $s$ to be irrational, we make sure that 
on any line of the foliation 
there is at most one 
vertex of our square tiling. 
To every line $\ell$ that avoids 
any such vertex 
one can associate canonically a biinfinite word $w(\ell)$ in the letters $h$ and $v$, which records the sequence of intersections of $\ell$ with a horizontal (``$h$'') or vertical (``$v$'') line of our square grid. In order to fix an indexing of the letters of $w(\ell)$ we pick a distinguished ``base square'' $Q$ and require that $\ell$ passes through the interior of $Q$. 
We quickly observe 
that the orbits in 
our family of lines $\ell$, with respect to the canonical $\Z \oplus \Z$-action on $\R^2$, are in 1-1 relation with the shift-orbits of the resulting set of words $w(\ell)$.
Indeed, for this 1-1 relation it suffices to consider the positive half-words of any $w(\ell)$, so that 
it extends naturally to 
the lines $\ell$ that pass over any of the vertices.

We 
consider now more closely 
any of the ``troublesome'' lines $\ell_P$ that cross over 
a vertex 
$P$ of the square grid. 
To $\ell_P$ we associate two words $w_{\rm{above}}(\ell_P)$ and $w_{\rm{below}}(\ell_P)$ 
in $\{h, v\}^\Z$, which are read off from $\ell_P$ after isotopying it slightly in the neighborhood of $P$ so that it passes either above or below $P$.
From the above observed 1-1 relation between the $\Z \oplus \Z$-orbits of the lines $\ell$ and the shift-orbits of the corresponding words $w(\ell)$ we deduce that the words $w_{\rm{above}}(\ell_P)$ and $w_{\rm{below}}(\ell_P)$ do not belong to the same shift-orbit.

The set $X_s \subset \{h, v\}^\Z$ of all biinfinite words $w(\ell)$, including the above defined $w_{\rm{above}}(\ell_P)$ and $w_{\rm{below}}(\ell_P)$, for any line $\ell$ that passes through our distinguished base square $Q$, is a subshift - indeed, a well known Sturmian subshift.

\smallskip
\noindent
(2)
We now proceed by subdividing the top and bottom side of each square into segments of equal length through introducing a new vertex at the midpoint of any horizontal segment of the square grid. Any transition of a line $\ell$ through the left half of the subdivided horizontal square side will now be recorded by the letter $h_{\rm{left}}$, and any transition through the right half by $h_{\rm{right}}$, to give a new biinfinite word $w'(\ell) \in \{h_{\rm{left}}, h_{\rm{right}}, v\}^\Z$. The morphism $\sigma: \{h_{\rm{left}}, h_{\rm{right}}, v\}^\Z \to \{h, v\}^\Z$ defined by $h_{\rm{left}} \mapsto h, h_{\rm{right}} \mapsto h$ and  $v \mapsto v$ maps any $w'(\ell)$ to $w(\ell)$, and it will be 1-1, except for the new ``troublesome'' lines $\ell_R$ that pass through any of the new vertices $R$ in the middle of our original horizontal square grid intervals. For such lines we have as before 2 words $w'_{\rm{above}}(\ell_R)$ and $w'_{\rm{below}}(\ell_R)$, and both have the same image word $w(\ell_R)$.
Since $w'_{\rm{above}}(\ell_R)$ and $w'_{\rm{below}}(\ell_R)$ belong 
as above to distinct shift-orbits, the morphism $\sigma$ is not shift-orbit injective, and hence not a recognizable (see Proposition \ref{recognizable-e} (1)).

Clearly, this process can be iterated arbitrarily often, and every time the obtained morphism is shift-orbit injective except for two particular shift-orbits, which have the same image orbit.

\smallskip
\noindent
(3)
The above set-up of lines in a square grid of $\R^2$ admits a particularly convincing translation into an IET setting, since for any of the squares we can use the left hand and the bottom sides together as ``bottom intervals'', and the top side together with the right hand side as ``top intervals'', and the line segments of our foliation that are contained in the chosen square give canonically a classical IET system. If the chosen square agrees with the above picked base square $Q$, then the interval coding associated traditionally to the IET defines a subshift that agrees precisely  with the one given by the set of biinfinite words $w(\ell)$  (or similarly for $w'(\ell)$), which have been read off above from the intersections of the lines $\ell$ with the given square grid.
\end{rem}

After this ``appetizer'' we now give a detailed description of our construction in the subsequent 4 subsections. We assume a minimal familiarity with the basic terminology of Thurston's work on surfaces, such as ``pseudo-Anosov homeomorphism'', ``stable lamination'' or ``invariant train track''.

\subsection{The basic geometric quotient construction}
\label{sec:8.1}

${}^{}$

\smallskip

We will start by describing our basic geometric construction, using a pseudo-Anosov homeomorphism $h$ of a compact 
orientable 
surface $\Sigma$, and its expanding  invariant lamination $\Lambda^s$, which consists of uncountably many biinfinite geodesics (called ``leaves'') with respect to a fixed hyperbolic structure on $\Sigma$. [The family $\Lambda^s$ was called ``the stable lamination'' by Thurston, as he was looking at its behavior when lifted to 
the universal covering of $\Sigma$, identified with the hyperbolic plane $\Hy^2$, in the neighborhood of a $\partial\tilde h$-fixed point on $\partial \Hy^2$ (where $\tilde h$ is a lift of $h$ to $\Hy^2$ and $\partial\tilde h$ is the canonical extension of $\tilde h$ to $\partial \Hy^2$).] 

It is a standard procedure to translate such laminations (for instance by using an $h$-invariant train track neighborhood of $\Lambda^s$)
into a classical interval exchange setting, 
which in turn (assuming that $\Lambda^s$ is orientable and $\Sigma$ has at least one boundary component) allows a direct translation of $\Lambda^s$ into a 
subshift $X \subset\cal A^\Z$,
where 
$\cal A$ is given by the intervals in the IET.
Since both of these translations are well known (see for instance 
\cite{Del16}, \cite{Gadre}, \cite{Kap09}), 
we will restrict ourselves here only to a description of the geometry of $h$ and $\Lambda^s$.

For our purposes it is convenient to 
impose the following extra conditions:
\begin{enumerate}
\item[(H1)]
Assume that $\Sigma$ has $r \geq 2$ boundary components, 
which are all fixed by $h$. 
\item[(H2)]
Each complementary component of $\Lambda^s$ contains precisely one boundary component.

(Note that this assumption effects that there is a natural identification of $\pi_1 \Sigma$ with the free group $F(\cal A)$.)

\item[(H3)]
Each complementary component has at least 2 cusps, and each cusp is fixed by $h$. 
\end{enumerate}

We now pick
a particular complementary component $\Sigma_i \subset \Sigma$ of $\Lambda^s$, and assume that $\Sigma_i$ has precisely two cusps, and thus also precisely two boundary leaves $\ell_1$ and $\ell_2$, which 
(as do all boundary leaves of complementary components) 
will then both belong to $\Lambda^s$. We now pass to a quotient surface $\Sigma'$ by ``filling in'' the boundary component of $\Sigma$ that is contained in $\Sigma_i$, through identifying all points of the boundary curve in $\Sigma_i$ into a single point $P$ of $\Sigma'$. Then $h$ induces a pseudo-Anosov homeomorphism $h': \Sigma' \to \Sigma'$ with stable lamination $\Lambda'^s$, and there is a canonical quotient map $q: \Lambda^s \to \Lambda'^s$ that commutes with $h$ and $h'$ respectively. The map $q$ is 1-1 everywhere, except at points on the leaves $\ell_1$ and $\ell_2$, which are identified by $q$ to a single leaf $\ell' \in \Lambda'^s$. The leaf $\ell'$ is fixed and expanded by $h'$, and the sole $h'$-fixed point on $\ell'$ is precisely the above point $P$. This can be seen for example by the canonical passage from 
the stable lamination 
$\Lambda^s$ to the associated stable foliation $\cal F^s$ for $h$.

\begin{rem}
\label{minimal-to-minimal}
(1)
There is a remarkable feature here in that 
both, $\Lambda^s$ and $\Lambda'^s$ are minimal laminations (i.e. each leaf is dense), while the map $q$ induces on the leaf spaces of $\Lambda^s$ and $\Lambda'^s$ a map that is surjective, but not injective.

\smallskip
\noindent
(2)
This is translated (via the associated IETs as indicated above) into a subshift $X \subset \cal A^\Z$ that is mapped by a morphism $\sigma: \cal A^* \to \cal A'^*$ to a subshift $\sigma(X) =: X' \subset \cal A'^\Z$ 
(for $\cal A'^* \subset F(\cal A') = \pi_1 \Sigma'$, in complete analogy to $\cal A$ and $\Sigma$ in the above set-up). 
Here both, $X$ and $X'$, are minimal, while 
the map induced by $\sigma$ on $X$ is not shift-orbit injective, so that $\sigma$ is not recognizable in $X$.

\smallskip
\noindent
(3)
More precisely, since there is a natural 1-1 correspondence between the shift-orbits of $X$ and the leaves of $\Lambda^s$ (and similarly for $X'$ and $\Lambda'^s$), we observe that $\sigma$ maps precisely two shift-orbits of $X$ to a common image shift-orbit of $X'$, while everywhere else the induced map on shift-orbits is 1-1.
\end{rem}

\subsection{The ``inverse'' geometric quotient construction}
\label{sec:8.2}

${}^{}$

\smallskip

After having presented 
our basic geometric quotient construction, we will now describe the precise converse procedure: For this purpose we assume in this subsection that $\sigma_0, h_0, \Lambda_0^s, \cal A_0$ and $X_0$ are as $\Sigma, h, \Lambda^s, \cal A$ and $X$ in subsection \ref{sec:8.1} above, and that in particular the conditions (H1) - (H3) are satisfied, except that in (H1) we lower the assumption on the number $r$ of boundary components of $\Sigma_0$ to $r \geq 1$. We now select any non-boundary leaf $\ell_0$ of $\Lambda_0^s$ which is fixed by $h_0$: 
\begin{equation}
\label{fix-pt}
h_0(\ell_0) = \ell_0
\end{equation}
Since $\Lambda_0^s$ is expanded by $h_0$, it follows that there is precisely one fixed point $P = h_0(P) \in \ell$. We derive the surface $\Sigma_1$ from $\Sigma_0$ by puncturing a hole in $\Sigma_0$ at the point $P$, and observe from (\ref{fix-pt}) that $h_0$ induces a homeomorphism $h_1: \Sigma_1 \to \Sigma_1$. Again from considering the stable foliation $\cal F_0^s$ associated to $\Lambda_0^s$, we obtain the stable lamination $\Lambda_1^s \subset \Sigma_1$ for $h_1$ from $\Lambda_0^s$ by doubling the leaf $\ell_0$ into two leaves $\hat \ell_0$ and $\hat \ell'_0$, which are boundary leaves of a new complementary component $\Sigma'_1 \subset \Sigma_1$ that has no further boundary leaf. The component $\Sigma'_1$ contains a new boundary component 
of $\Sigma_1$ that runs around the puncture where formerly the point $P \in \Sigma_0$ was located. 

From this construction we obtain a 
quotient map $q_0: \Lambda_1^s \to \Lambda_0^s$ that 
satisfies
\begin{equation}
\label{eq8.2}
h_0 \circ q_0 \,\, = \,\, q_0 \circ h_1 \, ,
\end{equation}
and 
$q_0$ is 1-1 everywhere except on the leaves $\hat \ell_0$ and $\hat \ell'_0\,$, which are identified by $q_0$ to the single leaf $\ell_0 \in \Lambda_0^s\,$. We thus observe that the ``quotient procedure'' from $\Sigma_1, h_1$ and $\Lambda_1^s$ to $\Sigma_0, h_0$ and $\Lambda_0^s$ is precisely the same as described in subsection \ref{sec:8.1} when passing from $\Sigma, h$ and $\Lambda^s$ to $\Sigma', h'$ and $\Lambda'^s$.

\begin{rem}
\label{factor-map}
In the passage from $\Lambda_0^s$ to $\Lambda_1^s$, when translated into the IET language as in Remark \ref{minimal-to-minimal}, we observe that the IET for $\Lambda_1^s$ derives from the IET for $\Lambda_0^s$ by subdividing one of the intervals (namely the one onto which we choose to isotope $P$ along the leaf $\ell_0$). 
Hence the alphabet $\cal A_1$ for $\Lambda_1^s$ derives from  
$\cal A_0$
by doubling one of its letters, namely the one corresponding to the subdivided interval.

For the minimal subshift $X_1 \subset \cal A_1^\Z$ associated to $\Lambda_1$ and the morphism $\sigma_0: \cal A_1^* \to \cal A_0^*$ determined by the map $q_0$, which maps $X_1$ to 
$X_0$
and is non-recognizable in $X_1$, it follows that $\sigma_0$ is letter-to-letter, so that $X_0$ is actually a factor of $X_1\,$.
\end{rem}

\subsection{Iteration of the inverse  quotient construction}
\label{sec:8.3}

${}^{}$

\smallskip

We now look for a leaf $\ell_1 \in \Lambda_1^s$ with $h_1(\ell_1) = \ell_1\,$. As shown in the previous subsection, this is the only ingredient needed in order to repeat the above procedure to obtain a surface $\Sigma_2$, a pseudo-Anosov homeomorphism $h_2: \Sigma_2 \to \Sigma_2$ with stable lamination $\Lambda_2^s$, a map $q_1: \Lambda_2^s \to \Lambda_1^s$ and a morphism $\sigma_1: \cal A_2^* \to \cal A_1^*$ that is non-recognizable on the minimal subshift $X_2$ which satisfies $\sigma_1(X_2) = X_1\,$.

Hence, in order to be able to repeat this procedure infinitely often, with the purpose to get for any $n \geq 0$ a morphism $\sigma_{n}: \cal A_{n+1}^* \to \cal A_{n}^*$ that is non-recognizable on a minimal subshift $X_{n+1}$ with $\sigma_{n}(X_{n+1}) = X_{n}\,$, we just need for any $\Lambda_n^s$ a leaf $\ell_n \in \Lambda_n^s$ with $h_n(\ell_n) = \ell_n\, $. However, up to replacing $h_n$ by a power $h_n^{t(n)}$ for some suitable integer $t(n) \geq 1$, this is no problem: It is well known that any pseudo-Anosov map 
$h$ has infinitely many $h$-periodic 
leaves in its stable lamination.  We obtain
the following result, which is however only an intermediate step in our construction: In particular, the subshifts $X_n$ are {\em not} the intermediate level subshifts of the given directive sequence $\bvec \sigma$.
 
\begin{prop}
\label{infinite-non-recognize}
There exists 
a directive 
sequence $\bvec \sigma = (\sigma_n: \cal A_{n+1}^* \to \cal A_{n}^*)_{n \geq 0}$ 
and subshifts $X_n \subset \cal A_n^\Z\,$, such that for any $n \geq 0$ the following hold:
\begin{enumerate}
\item
$\sigma_n(X_{n+1}) = X_n\, $, and $\sigma_n$ is not recognizable 
in $X_{n+1}\,$.
\item
$\card(\cal A_{n+1}) = \card(\cal A_{n}) +1$
\item
$\sigma_n$ is letter-to-letter. In particular, $\sigma_n$ commutes with the shift operator, and $X_n$ is a factor of $X_{n+1}\,$. 
\item
$X_n$ is minimal, aperiodic and uniquely ergodic. 
\item
$X_n$ is 
substitutive 
(see Remark \ref{2.2d} (2)) 
for some primitive substitution $\tau_n: \cal A_n^* \to \cal A_n^*\,$.
\item
$\tau_n^{t(n)} \circ \sigma_n = \sigma_n \circ \tau_{n+1}$ for some integer $t(n) \geq 1$.
\end{enumerate}
\end{prop}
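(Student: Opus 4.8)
The plan is to realize the asserted directive sequence as the output of iterating the ``inverse quotient construction'' of subsection \ref{sec:8.2} infinitely often, exactly as announced at the start of subsection \ref{sec:8.3}. I would begin by fixing a base datum: a pseudo-Anosov homeomorphism $h_0$ on a surface $\Sigma_0$ with orientable stable lamination $\Lambda_0^s$ satisfying the conditions (H1)--(H3) (with $r \geq 1$), together with its associated IET-alphabet $\cal A_0$ and the minimal coding subshift $X_0 \subset \cal A_0^\Z$. Since $h_0$ carries its invariant train track into itself, it induces a primitive substitution $\tau_0: \cal A_0^* \to \cal A_0^*$ (primitivity coming from the pseudo-Anosov, i.e. Perron--Frobenius, behaviour) whose substitutive subshift is precisely $X_0$; this already establishes the base case of properties (4) and (5).

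The inductive step is the core of the argument. Suppose the datum $(\Sigma_n, h_n, \Lambda_n^s, \cal A_n, X_n, \tau_n)$ has been produced and satisfies (H1)--(H3). Using the classical fact, recalled in subsection \ref{sec:8.3}, that every pseudo-Anosov map has infinitely many periodic leaves in its stable lamination, I would select a non-boundary leaf $\ell_n \in \Lambda_n^s$ with $h_n^{t(n)}(\ell_n) = \ell_n$ for some integer $t(n) \geq 1$. Replacing $h_n$ by $h_n^{t(n)}$ (again pseudo-Anosov, with the same stable lamination), the leaf $\ell_n$ becomes genuinely fixed in the sense of (\ref{fix-pt}) and carries a unique fixed point $P_n$. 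Puncturing $\Sigma_n$ at $P_n$ and applying verbatim the construction of subsection \ref{sec:8.2} then yields a surface $\Sigma_{n+1}$, a pseudo-Anosov $h_{n+1}$ induced by $h_n^{t(n)}$, a stable lamination $\Lambda_{n+1}^s$ obtained by doubling $\ell_n$, an equivariant quotient map $q_n: \Lambda_{n+1}^s \to \Lambda_n^s$ satisfying $h_n^{t(n)} \circ q_n = q_n \circ h_{n+1}$ (the analogue of (\ref{eq8.2})), and the induced letter-to-letter morphism $\sigma_n: \cal A_{n+1}^* \to \cal A_n^*$. I would then check that the new complementary component created by the doubled leaf has exactly two cusps and contains exactly the new puncture boundary, so that $(\Sigma_{n+1}, h_{n+1}, \Lambda_{n+1}^s, \ldots)$ again satisfies (H1)--(H3) with orientable lamination; this self-reproduction of the hypotheses is what allows the induction to run forever.

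It then remains to read off properties (1)--(6) level by level. Properties (1) and (3) are exactly the content of Remarks \ref{minimal-to-minimal} and \ref{factor-map}: the quotient $q_n$ identifies precisely the two shift-orbits coming from the doubled leaf, so $\sigma_n$ is letter-to-letter, non-recognizable in $X_{n+1}$, and satisfies $\sigma_n(X_{n+1}) = X_n$. Property (2) holds because subdividing a single IET-interval doubles exactly one letter, the two new letters both mapping under $\sigma_n$ to the letter of the un-subdivided interval. For (4), minimality of each $X_n$ follows from minimality of $\Lambda_n^s$ (Remark \ref{minimal-to-minimal}(1)); aperiodicity follows since a minimal subshift coding an uncountable lamination is infinite and hence aperiodic; and unique ergodicity is the classical unique ergodicity of pseudo-Anosov stable foliations, equivalently of primitive substitutive subshifts. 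Property (5) follows as in the base case from the primitive train-track substitution $\tau_{n+1}$ induced by the pseudo-Anosov $h_{n+1}$. Finally, property (6) is obtained by translating the geometric equivariance $h_n^{t(n)} \circ q_n = q_n \circ h_{n+1}$ into the language of substitutions, where $h_n^{t(n)}$ induces $\tau_n^{t(n)}$, the map $h_{n+1}$ induces $\tau_{n+1}$, and $q_n$ induces $\sigma_n$, giving $\tau_n^{t(n)} \circ \sigma_n = \sigma_n \circ \tau_{n+1}$.

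The main obstacle I anticipate is not any single one of these verifications but the bookkeeping that makes the induction genuinely self-perpetuating together with the careful separation of the two roles played by $h_n$. On one hand I must confirm that the punctured surface still satisfies (H1)--(H3) with an orientable stable lamination, so that the identical construction applies again at level $n+1$; on the other hand I must track the powers $t(n)$ so that the substitution $\tau_n$ appearing in (5) is the one attached to $h_n$ itself, while the power $\tau_n^{t(n)}$ --- attached to the map $h_n^{t(n)}$ actually used to manufacture the fixed leaf --- is precisely what surfaces in the equivariance relation (6). This tension between ``iterating $h_n$ to obtain a fixed leaf'' and ``recording the substitution of the un-iterated $h_n$'' is the delicate point, but it is forced by the geometry and requires only a clean disentangling of the two roles rather than any new idea.
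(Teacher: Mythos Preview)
Your proposal is correct and follows essentially the same approach as the paper: iterate the inverse quotient construction of subsection \ref{sec:8.2}, using at each step the existence of an $h_n$-periodic leaf (made fixed by passing to $h_n^{t(n)}$), and then read off properties (1)--(6) from the geometry exactly as the paper does. The only cosmetic difference is that the paper derives all of property (4) in one stroke from (5) (primitive substitutive implies minimal, aperiodic, uniquely ergodic), whereas you argue the three parts separately; your extra attention to the preservation of (H1)--(H3) and the bookkeeping of the powers $t(n)$ is implicit in the paper's treatment.
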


\begin{proof}
Properties (1), (2) and (3) have been derived in the construction described above. The substitution $\tau_n$ from (5) is the translation of the 
homeomorphism 
$h_n$ 
into the monoid setting through the canonical embedding $\cal A_n^* \subset F(\cal A_n) = \pi_1 \Sigma_n$.
The primitivity of $\tau_n$ is a direct consequence of the assumption ``pseudo-Anosov'' for $h$ and thus for all $h_n$. 
Property (4) is a direct consequence of (5), and (6) is 
the translation 
into the monoid setting 
of the commutativity relation $h_n^{t(n)} \circ q_{n} = q_{n} \circ h_{n+1}\, $, 
which is a consequence of the equality (\ref{eq8.2}) together with the above replacement of $h_n$ by $h_n^{t(n)}$.
\end{proof}

\subsection{Everywhere growing directive sequences that are not (eventually) recognizable}
\label{sec:8.4}

${}^{}$

\smallskip

The sequence $\bvec \sigma$ from Proposition \ref{infinite-non-recognize} is not everywhere growing; in fact, for any integers $m > n \geq 0$ the telescoped level map $\sigma_{[n, m)}$ is letter-to-letter. However, by choosing suitable ``diagonal'' or ``eventually horizontal'' paths through the infinite commutative diagram built from the above morphisms $\sigma_n$ (``vertical'') and $\tau_n$ (``horizontal'') we will derive below everywhere growing directive sequences with interesting properties.

Using the terminology from Proposition \ref{infinite-non-recognize}, we first define 
for each $n \geq 0$ the morphism 
$$\sigma'_n: = 
\tau_n^{t'(n)} \circ \sigma_n \,\,\, (\, = \sigma_n \circ 
\tau_{n+1}^{s(n)}\, ) ,$$
where we set $t'(n) := s(n) \, t(n)$ for some suitably chosen integer $s(n) \geq 1$ which ensures that the incidence matrix $M(\tau_n^{t'(n)})$ is positive. Such $s(n)$ exists because of property (5) of Proposition \ref{infinite-non-recognize}, and since $M(\sigma_n)$ has no zero-columns, it follows furthermore that
\begin{equation}
\label{eq7.2.5e}
\text{the incidence matrix $M(\sigma'_n)$ is positive, for any index $n \geq 0$.} 
\end{equation}

We now define a directive sequence $\bvec \sigma' = (\sigma'_n: \cal A_{n+1}^* \to \cal A_{n}^*)_{n \geq 0}$ with intermediate level subshifts called $X'_n$. Since $\tau_n(X_n) = X_n$ and $\sigma_n(X_{n+1}) = X_n$, we have 
\begin{equation}
\label{eq7.3e}
\sigma'_n(X_{n+1}) = X_n
\end{equation}
for any $n \geq 0$, so that 
from the minimality of $X_n$ we can deduce $X_n \subset X'_n$. In particular, we obtain from statement (1) of Proposition \ref{infinite-non-recognize} together with
Remark \ref{2.3e} 
that $\sigma'_n$ is not recognizable in $X_{n+1}$ and thus neither in $X'_{n+1}$.
From (\ref{eq7.2.5e}) we obtain directly 
(see Remark \ref{2.2d} (3)) 
that the sequence $\bvec \sigma'$ is everywhere growing.

\smallskip

Furthermore, we define for any integer $k \geq 0$ a directive sequence $\bvec \tau_{\! k} = (\tau'_n)
_{n \geq 0}$ through setting $\tau'_n := \tau_k$ for all $n \geq k$ and $\tau'_n = \sigma'_n$ if $0 \leq n \leq k-1$. We also specify the starting surface $\Sigma_0$ to be a punctured torus, so that one has $|\cal A_0| = 2$, and $X_0$ is Sturmian.
It follows that for any level $n \geq k$ the intermediate level $n$ subshift of $\bvec \tau_{\! k}$ is equal to the 
substitutive subshift $X_k$ defined by the substitution $\tau_k$ from statement (5) of Proposition \ref{infinite-non-recognize}, 
so that for every $0 \leq n \leq k-1$ we deduce from (\ref{eq7.3e}) that the level $n$ subshift is equal to $X_n$. The primitivity of $\tau_k$ implies in particular that the directive sequence $\bvec \tau_{\! k}$ is everywhere growing. Recall also 
that 
(as is true for all stationary sequences, 
see \cite{BPR21} and the references given there)
the truncated stationary sequence $\bvec \tau_{\! k} = (\tau'_n)_{n \geq k}$ is totally recognizable.

We obtain hence as immediate consequence of Proposition \ref{infinite-non-recognize} the following result; we observe that its parts (2) and (3) give directly the statements that have been rephrased in the Introduction and stated there as Proposition \ref{1.6}:

\begin{cor}
\label{2-interesting-sequences}
(1) 
The directive sequence $\bvec \sigma'$ is everywhere growing and satisfies 
the properties 
(1), (2), (4), (5) and (6) 
from Proposition \ref{infinite-non-recognize}, 
with $\sigma_n$ replaced by $\sigma'_n$.

\smallskip
\noindent
(2)
For any integer $k \geq 0$ there exists a directive sequence $\bvec \tau_{\! k}$,  with level alphabets $\cal A_n$ of 
size 
$\card(\cal A_n) = k +2$
for any level $n \geq k$, 
and $\card(\cal A_n) = n +2$ if $n \leq k$.
 
The sequence $\bvec \tau_{\! k}$ is everywhere growing and eventually recognizable: 
each of the first $k$ level morphisms on the bottom of $\bvec \tau_{\! k}$ is not recognizable in its corresponding level subshift, 
while all level morphisms of level $n \geq k$ are recognizable in their corresponding level subshift. Indeed, the sequence $\bvec \tau_{\! k}$ is stationary above level $k$.

\smallskip
\noindent
(3)
All intermediate level subshifts of the above directive sequences 
$\bvec \tau_{\! k}$ are minimal, 
uniquely ergodic 
and aperiodic.
In particular, the properties 
``recognizable'', ``shift-orbit injective'' (see Definition \ref{recognizable-e}) and ``recognizable for aperiodic points'' (see 
Remark \ref{added-late} (2))
are equivalent, 
for each level morphism 
in its corresponding intermediate level  subshift.
\qed
\end{cor}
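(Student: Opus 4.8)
The plan is to read off all three assertions from Proposition~\ref{infinite-non-recognize} together with the construction of $\bvec \sigma'$ and $\bvec \tau_{\! k}$ already carried out; the content is bookkeeping, the only delicate point being to make sure that the non-recognizability statements, which are phrased in terms of the geometric subshifts $X_n$, really concern the relevant intermediate level subshifts.

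For Part~(1), everywhere growth of $\bvec \sigma'$ is the positivity~(\ref{eq7.2.5e}) read through criterion~($\frak Y$) of Remark~\ref{2.2d}~(3). Property~(1) is exactly equality~(\ref{eq7.3e}) together with the non-recognizability of $\sigma'_n$ in $X_{n+1}$ noted right afterwards. Properties~(2), (4) and (5) involve only the alphabets $\cal A_n$, the subshifts $X_n$ and the substitutions $\tau_n$, none of which changes when $\sigma_n$ is replaced by $\sigma'_n$, so they are inherited verbatim. Property~(6) I would settle by the one-line computation
$$\tau_n^{t(n)} \circ \sigma'_n = \tau_n^{t(n)} \circ \tau_n^{t'(n)} \circ \sigma_n = \tau_n^{t'(n)} \circ (\tau_n^{t(n)} \circ \sigma_n) = \tau_n^{t'(n)} \circ \sigma_n \circ \tau_{n+1} = \sigma'_n \circ \tau_{n+1},$$
using $\sigma'_n = \tau_n^{t'(n)} \circ \sigma_n$ and property~(6) of $\bvec \sigma$. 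To make this fully meaningful I would also upgrade the inclusion $X_n \subseteq X'_n$ (obtained from minimality in the text) to an equality: since each $\sigma_j$ is letter-to-letter, $\sigma_{[n,m)}(a)$ is a single letter $b \in \cal A_n$, and commuting the $\tau$-factors to the left via property~(6) gives $\sigma'_{[n,m)}(a) = \tau_n^{E(n,m)}(b)$ with $E(n,m) \to \infty$; the factors of these words exhaust $\cal L(X_n)$ precisely because $\tau_n$ is a primitive substitution generating $X_n$. Hence $X'_n = X_n$.

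For Part~(2), the alphabet counts come from $\card(\cal A_0) = 2$ and property~(2), giving $\card(\cal A_n) = n+2$, together with the fact that $\bvec \tau_{\! k}$ uses $\cal A_k$ at every level $n \geq k$. Everywhere growth combines the positivity~(\ref{eq7.2.5e}) of the finitely many bottom maps $\sigma'_0, \ldots, \sigma'_{k-1}$ with the primitivity of the stationary top map $\tau_k$, again through criterion~($\frak Y$). For eventual recognizability I would first identify the level subshifts: for $n \geq k$ the level $n$ subshift is the $\tau_k$-substitutive subshift $X_k$, and then a finite downward induction using~(\ref{eq7.3e}) shows it equals $X_n$ for $0 \leq n \leq k-1$. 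With this, the top morphisms $\tau_k$ are recognizable in $X_k$ because the stationary sequence is totally recognizable (\cite{BPR21}), while each bottom morphism $\sigma'_n$ is non-recognizable in its level subshift $X_{n+1}$ by Part~(1).

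For Part~(3), every level subshift of $\bvec \tau_{\! k}$ is some $X_n$ (for $n < k$) or $X_k$ (for $n \geq k$), all of which are minimal, uniquely ergodic and aperiodic by property~(4). The final equivalence is then formal: by Proposition~\ref{3m.1} recognizability equals shift-orbit injectivity plus shift-period preservation, and the latter is vacuous in an aperiodic subshift (Definition~\ref{recognizable-e}); moreover, since each image subshift $X_n$ is aperiodic, the word ${\bf y}$ in Definition~\ref{6.1} is never periodic, so ``recognizable for aperiodic points'' (Remark~\ref{added-late}~(2)) coincides with ``recognizable''. Together with Remark~\ref{added-late}~(3) this gives the equivalence of all three notions. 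I expect the only real care to be required in the two subshift identifications $X'_n = X_n$ and (for $\bvec \tau_{\! k}$) the downward induction, which is precisely the step linking the abstractly defined level subshifts to the geometric $X_n$ in which non-recognizability was established.
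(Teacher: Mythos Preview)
Your proposal is correct and follows the same approach as the paper, which states the corollary as an immediate consequence of Proposition~\ref{infinite-non-recognize} and the construction preceding it; you have simply spelled out the bookkeeping that the paper leaves implicit.

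One point worth noting: your elementary argument for the equality $X'_n = X_n$ --- commuting all $\tau$-powers to the left via property~(6) to obtain $\sigma'_{[n,m)}(a) = \tau_n^{E(n,m)}(\sigma_{[n,m)}(a))$ with $E(n,m) \to \infty$, and then invoking primitivity of $\tau_n$ --- goes beyond what the corollary strictly requires (Part~(1) speaks of the geometric subshifts $X_n$, not the level subshifts $X'_n$). The paper in fact postpones this equality to Remark~\ref{7.6e}, where it is asserted to follow from the North--South dynamics of pseudo-Anosov homeomorphisms on projectivized measured lamination space, and the authors explicitly decline to give details. Your commutation argument is correct and more elementary than what the authors envisaged, so this is a genuine (if small) improvement rather than a gap.
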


\begin{rem}
\label{7.6e}
It turns out that property (3) of Corollary \ref{2-interesting-sequences} is also true for the directive sequence $\bvec \sigma'$. Indeed, 
from property (6) of Proposition \ref{infinite-non-recognize} and the well known North-South dynamics induced by any pseudo-Anosov homeomorphism of $\Sigma$ on the projectivized space of all measured laminations (= the boundary of  Teichm\"uller space for $\Sigma$) one can deduce that the inclusion $X_n \subset X'_n$ derived after (\ref{eq7.3e}) is actually an equality. However, laying out the details of these arguments would go beyond our self-imposed limits on the amount of Nielsen-Thurston theory imported into this section.
\end{rem}

\begin{rem}
\label{last}
Given 
any eventually recognizable everywhere growing directive sequence $\bvec \sigma = (\sigma_n)_{n \geq 0}$ of 
finite alphabet rank, 
one may ask 
whether there is an upper bound to the number level morphisms $\sigma_n$ 
which are not recognizable 
in their corresponding intermediate level subshift. 
This question 
has sparked some interest, see \cite{BSTY19} and \cite{DDMP}. It seems, however, that the examples given in part (2) of Corollary \ref{2-interesting-sequences} above contradict the bound claimed in Theorem 3.7 of \cite{DDMP}.
This could also effect the upper bound given in 
Corollary 1.5 of \cite{Espinoza} on the number of successive factor
maps, for a large class of subshifts.

In this context we 
also want to point to 
Example 7.5 of the very recent paper
\cite{3D+St}, where a family of directive sequences is presented that has the same properties as exhibited in Corollary \ref{2-interesting-sequences} (2) above for the sequences $\bvec \tau_k$. The examples from 
\cite{3D+St} are easier to describe, but fail to have the extra properties listed in part (3) of Corollary  \ref{2-interesting-sequences}.

Another construction of a similar kind (but closer our Corollary \ref{2-interesting-sequences} above) has been communicated to us by Basti\`an Espinoza \cite{Esp23} in the final stages of the revision of this paper.  
\end{rem}

\end{document}